\tikzset{cdlabel/.style={above,sloped,
    execute at begin node=$\scriptstyle,execute at end node=$}}    
\tikzset{al/.style={->, bend right=45, thick}}
\tikzset{ar/.style={->, bend left=45, thick}}
\theoremstyle{definition}
\newtheorem{definition}[equation]{Definition}
\newtheorem*{remark*}{Remark}
\newtheorem*{example*}{Example}
\newtheorem{example}[equation]{Example}
\newtheorem{remark}[equation]{Remark}
\theoremstyle{plain}
\newtheorem{theorem}[equation]{Theorem}
\newtheorem{proposition}[equation]{Proposition}
\newtheorem{lemma}[equation]{Lemma}
\numberwithin{equation}{section}
\numberwithin{figure}{section}
\numberwithin{figure}{section}
\newtheorem{ourtheorem}{Theorem} 
\newtheorem{ourcorollary}[ourtheorem]{Corollary}
\newcommand\fol{\mathcal{F}}
\newcommand{\R}{\ensuremath{\mathbb{R}}}
\newcommand\CFt{{\rm {\widetilde{CF}}}}
\newcommand\HFt{{\rm {\widetilde{HF}}}}
\newcommand\SFC{{\rm {SFC}}}
\newcommand\alphas{\mbox{\boldmath$\alpha$}}
\newcommand\betas{\mbox{\boldmath$\beta$}}
\newcommand\F{\mathbb F}
\newcommand\HD{\mathcal H}
\newcommand\pit{{\widetilde{\pi}}}
\newcommand\bsGamma{\mathsf{\Gamma}}
\def\x{\mathbf{x}}
\def\y{\mathbf{y}}
\def\z{\mathbf{z}}
\def\ip#1{{\textcolor{teal}{#1}}}
\def\co{\colon\thinspace} 
\newcommand{\bdy}{\partial}
\newcommand{\sqtens}{\boxtimes} 
\newcommand{\ol}{\overline}
\newcommand{\xx}{{\bf x}}
\DeclareMathOperator{\Int}{Int}
\newcommand{\HH}{\mathcal{H}}
\newcommand{\zz}{\mathcal Z}
\newcommand{\xxx}{\mathbf{x}}
\newcommand{\cf}{\mathit{CF}}
\newcommand{\hf}{\mathit{HF}}
\newcommand{\bsd}{\mathit{BSD}}
\newcommand{\bsa}{\mathit{BSA}}
\newcommand{\sfh}{\mathit{SFH}}
\newcommand{\sfc}{\mathit{SFC}}
\newcommand{\cfhat}{\widehat{\cf}}
\newcommand{\hfhat}{\widehat{\hf}}
\newcommand{\bsdhat}{\widehat{\bsd}}
\newcommand{\bsahat}{\widehat{\bsa}}
\newcommand{\brho}{\boldsymbol\rho}
\begin{document}

\title[{Bordered Floer homology and contact structures}]{Bordered Floer homology and contact structures}

\author[A. Alishahi]{Akram Alishahi}
\address {Department of Mathematics, University of Georgia\\ Athens, GA 30606}
\email {akram.alishahi@uga.edu}
\thanks{AA was supported by NSF Grant DMS-2019396.}
\author[V. F\"oldv\'ari]{Vikt\'oria F\"oldv\'ari}
\address {Alfr\'ed R\'enyi Institute of Mathematics, Budapest}
\email {foldvari@renyi.hu}
\thanks{VF was supported by the
	NKFIH \'Elvonal (Frontier) project KKP126683.}
\urladdr{\href{http://fviktoria.web.elte.hu/}{http://fviktoria.web.elte.hu/}}
\author[K. Hendricks]{Kristen Hendricks}
\address {Department of Mathematics, Rutgers University \\ New Brunswick, NJ 08901}
\email{kristen.hendricks@rutgers.edu}
\thanks{KH was supported by NSF CAREER Grant DMS-2019396 and a Sloan Research Fellowship.}
\urladdr{\href{http://math.rutgers.edu/~kh754}{www.math.rutgers.edu/~kh754}}
\author[J. Licata]{Joan Licata}
\address {Mathematical Sciences Institute, The Australian National University\\ Canberra, Australia}
\email {joan.licata@anu.edu.au}
\urladdr{\href{https://sites.google.com/view/joanlicata/home}{https://sites.google.com/view/joanlicata/home}}
\author[I. Petkova]{Ina Petkova}
\address {Department of Mathematics, Dartmouth College\\ Hanover, NH 03755}
\email {ina.petkova@dartmouth.edu}
\thanks{IP was supported by NSF Grant DMS-1711100.}
\urladdr{\href{http://math.dartmouth.edu/~ina}{http://math.dartmouth.edu/~ina}}
\author[V. V\'ertesi]{Vera V\'ertesi}
\address {Faculty of Mathematics, University of Vienna}
\email {vera.vertesi@univie.ac.at}
\urladdr{\href{http://mat.univie.ac.at/~vertesi}{www.mat.univie.ac.at/~vertesi}}
\thanks{VV was supported by the ANR grant ``Quantum topology and contact geometry''}

\keywords{Heegaard Floer homology, open book, TQFT, contact topology}
\subjclass[2010]{57M27}

\begin{abstract}
We introduce a contact invariant in the bordered sutured Heegaard Floer homology of a three-manifold with boundary.  The input for the invariant is a contact manifold $(M, \xi, \mathcal{F})$ whose convex boundary is equipped with a signed singular foliation $\mathcal{F}$ closely related to the characteristic foliation.  Such a manifold admits a family of foliated open book decompositions classified by a Giroux Correspondence, as described in \cite{LV}.  We use a special class of foliated open books to construct admissible bordered sutured Heegaard diagrams and identify well-defined classes $c_D$ and $c_A$ in the corresponding bordered sutured modules. 

Foliated open books exhibit user-friendly gluing behavior, and we show that the pairing on invariants induced by gluing compatible foliated open books recovers the Heegaard Floer contact invariant for closed contact manifolds.  We also consider a natural map associated to forgetting the  foliation $\mathcal{F}$ in favor of the dividing set, and show that it maps the bordered sutured invariant to the contact invariant of a sutured manifold defined by Honda-Kazez-Mati\'{c}.

\end{abstract}

\maketitle

\tableofcontents


\section{Introduction}
\label{sec:intro}

A great deal of interesting and beautiful mathematics has been devoted to understanding the fundamental dichotomy in three-dimensional contact geometry: the subdivision of contact structures into tight and overtwisted. Overtwisted structures are determined by homotopical data and thus may be addressed by tools from algebraic topology. In contrast,  tight contact structures do not satisfy an $h$-principle, and many existence and classification questions for tight contact structures are still open.  Tight structures are nevertheless extremely natural objects of study, as they include the contact structures arising as the boundary of a complex or symplectic manifold. 

Many of the recent advances in classifying tight contact structures were made possible by the advent of Heegaard Floer homology in the early 2000s and the subsequent development of Floer-theoretic contact invariants.  Using open books, Ozsv\'ath and Szab\'o defined an invariant of closed contact three-manifolds \cite{oszc}. Given a closed, contact manifold $(M,\xi)$, this invariant is a class $c(\xi)$ in the Heegaard Floer homology $\hfhat(-M)$. 
In \cite{HKM09_HF}, Honda, Kazez, and Mati\'c gave an alternative description of $c(\xi)$, again using open books. This ``contact class" was used to show that knot Floer homology detects both genus \cite{hfkg} and  fiberedness \cite{pgf, ynf}. It gives information about overtwistedness:    if $\xi$ is overtwisted, then $c(\xi) = 0$, whereas  if $\xi$ is Stein fillable, then $c(\xi)\neq 0$ \cite{oszc}.  The contact class was also used to distinguish notions of fillability: Ghiggini used it to construct examples of strongly symplectically fillable contact three-manifolds which do not have Stein fillings \cite{ghfill}.

Contact manifolds with convex boundary can be  partitioned  by the equivalence class of the dividing set on their boundary.  Given a contact manifold $M$ with convex boundary and dividing set $\Gamma$, Honda, Kazez, and Mati\'c used partial open books to define an invariant $c(M, \Gamma, \xi)$ of the equivalence class in the sutured Floer homology $\sfh(-M, -\Gamma)$ \cite{hkm09}. 
They also defined a gluing map for sutured Floer homology that respects the contact invariants \cite{hkm08}. This map requires the 
Heegaard diagrams to satisfy a number of technical conditions, collectively referred to as ``contact-compatibility.'' Establishing contact-compatibility for specific examples is difficult, so in practice,   computations with the gluing map are rarely possible. As a result, most applications of this gluing map have relied only on its formal properties. 

Gluing techniques -- and Heegaard Floer theory more broadly --   benefitted soon after from the introduction of \emph{bordered Floer homology}, a new theory for manifolds with boundary defined by Lipshitz, Ozsv\'ath, and Thurston in \cite{LOTbook}.   Although bordered Floer theory has produced a wide variety of new results,  (e.g. \cite{hrrw, lsp, lev9, lev14, hom-tau, cables, AL-bs}), applications to contact topology remain mostly uncharted territory. 

To a three-manifold with parametrized boundary, bordered Floer homology associates an $\mathcal A_{\infty}$-module (or \emph{type A structure}), or equivalently, a \emph{type D structure} over a differential graded algebra associated to the parametrization.  When manifolds are glued along compatible parameterized boundaries, the derived tensor product of their bordered invariants recovers the Heegaard Floer homology of the closed three-manifold, up to homotopy equivalence. Zarev introduced a generalization, \emph{bordered sutured Floer homology}, which is an invariant of three-manifolds whose boundary is ``part sutured, part parametrized" \cite{bs}. Bordered sutured Floer homology similarly associates to a \emph{bordered sutured manifold} a type $A$ structure and a type $D$ structure such that taking derived tensor products recovers the sutured Floer homology of the manifold formed by gluing along the parameterized parts of the boundary.

\subsection{Results}
In this paper, we define a contact invariant in the bordered sutured Heegaard Floer homology of a contact three-manifold with boundary.  We consider contact manifolds whose convex boundary is equipped with a certain type of singular foliation.  As shown in \cite{LV}, to any such contact manifold with foliated boundary, one may associate a topological decomposition known  as a \emph{foliated open book}. Intuitively, foliated open books are constructed by cutting ordinary open books along separating convex surfaces. 
The pages and binding of the resulting pair of foliated open books are simply the restrictions of the pages and binding of the original open book to the corresponding piece.  The intersection of the cutting surface with the pages determines an ordered signed singular foliation, here called the boundary foliation. This induced ``open book foliation''  is closely related to the characteristic foliation of a supported contact structure and has been extensively studied in the work of Ito--Kawamuro, e.g., \cite{IK1, IK2}. Under mild technical hypotheses, the topological data of the resulting foliated open book  uniquely determines the restriction of the original contact structure to each piece, up to isotopy.

We associate to a foliated  contact three-manifold  $(M,\xi, \fol)$  a bordered sutured manifold $(M, \bsGamma, \mathcal{Z})$.  We then show  that the data of a sorted foliated open book for $(M,\xi, \fol)$ gives rise to an admissible bordered sutured Heegaard diagram for the manifold $(M, \bsGamma, \mathcal{Z})$, and so via dualizing, an admissible bordered sutured Heegaard diagram for $(-M,\bsGamma,\overline{\zz})$, which we will call $\HD$. Moreover, we define preferred generators $\x_D$ and $\x_A$ in $\bsdhat(\HD)$ and $\bsahat(\HD)$, respectively. The structures $\bsdhat(\HD)$ and $\bsahat(\HD)$ give invariants of $(-M,\bsGamma,\overline{\zz})$ up to homotopy equivalence. (Some further algebraic properties of these elements may be found in Proposition~\ref{prop:xd-def}.) We further define an \emph{equivalence} between elements of two homotopy equivalent type $A$ structures or between elements of two homotopy equivalent type $D$ structures in Section \ref{sssec:bs}.  Our main result is that these preferred generators are invariants of the contact structure up to this equivalence.

  \begin{ourtheorem}\label{thm:ca-cd}
Let  $(M,\xi, \fol)$  be a foliated contact three-manifold with associated bordered sutured manifold $(M, \bsGamma, \mathcal{Z})$. Given admissible bordered sutured Heegaard diagrams $\HD$ and $\HD'$ for $(-M,\bsGamma,\overline{\zz})$ with 
preferred generators $\x_A\in \bsahat(\HD)$ and $\x'_A\in \bsahat(\HD')$, there is a homotopy equivalence between   $\bsahat(\HD)$ and $\bsahat(\HD')$ induced by Heegaard moves, under which $\x_A$ and $\x'_A$ are equivalent; an analogous statement holds for the preferred generators $\x_D\in \bsdhat(\HD)$ and  $\x'_D\in \bsdhat(\HD')$. We refer to these type $A$ and type $D$ equivalence classes as $c_A(M,\xi, \fol)$ and $c_D(M,\xi, \fol)$, respectively.
 \end{ourtheorem}

 More precisely, we show  that varying the choices made in our construction induces  homotopy equivalences between the bordered sutured Floer homologies associated to the resulting Heegaard diagrams; these maps carry the preferred generator of one module to the preferred generator of the other module.  


An ordered signed singular foliation on the convex boundary of a contact manifold determines a dividing set, but such foliations also induce a finer partition on the set of contact manifolds with convex boundary. This additional data improves the compatibility with cut-and-paste operations,  as the ordered signed singular foliation carries data about both the contact structure and the foliated open book near the boundary.  Given a pair of foliated contact three-manifolds $(M^L, \xi^L, \fol^L)$ and $(M^R, \xi^R, \fol^R)$  whose  foliated boundaries agree in an appropriate sense, there is a canonical perturbation of the contact structure near the boundary so that the pieces   glue  to  a closed contact three-manifold $(M, \xi)$.  In fact, the foliated open books supporting these manifolds also glue to an open book for the resulting closed contact manifold; see Section~\ref{ssec:gluing}.  Because our bordered sutured contact invariant is sensitive not only to the dividing set of a convex boundary but also to the singular foliation, it behaves nicely with respect to these cut-and-paste operations.

We prove that  the contact invariants of the two foliated contact three-manifolds pair  to recover the contact invariant  of $(M, \xi)$.

\begin{ourtheorem}
\label{thm:glue}
The tensor product $c_D(M^L,\xi^L,\fol^L)\boxtimes c_A(M^R,\xi^R,\fol^R)$ recovers the contact invariant $c(M,\xi)$.
 \end{ourtheorem}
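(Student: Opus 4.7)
The plan is to start by choosing compatible sorted foliated open books for $(M^L,\xi^L,\fol^L)$ and $(M^R,\xi^R,\fol^R)$ whose bindings, pages, and boundary foliations agree along the common convex surface, and then to assemble them into an honest open book decomposition of the closed contact manifold $(M,\xi)$. This uses the cut-and-paste construction referenced in Section~\ref{ssec:gluing}, together with the Giroux correspondence for foliated open books from \cite{LV}, which guarantees that the contact structure recovered from the glued open book agrees with $\xi$ up to isotopy supported away from the gluing region.

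Next I would build the bordered sutured Heegaard diagrams $\HH^L$ and $\HH^R$ from the two sorted foliated open books using the construction employed in the proof of Theorem~\ref{thm:ca-cd}. The crucial point is that this construction is local with respect to the pages, so gluing the diagrams along the parametrized arc complexes $\zz$ and $\ol{\zz}$ produces a legitimate Heegaard diagram $\HH=\HH^L\cup\HH^R$ for the closed manifold $-M$, together with a distinguished generator $\x=\x^L\cup\x^R$ arising from the union of the two preferred generators. I would then verify that $\HH$ is a Heegaard diagram in the Honda--Kazez--Mati\'c form for the glued open book, and that $\x$ is the preferred contact generator in $\cfhat(-\HH)$ as constructed in \cite{HKM09_HF}.

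By the pairing theorem for bordered sutured Floer homology, the box tensor product
\[
\bsdhat(-M^L,\bsGamma^L,\ol{\zz})\boxtimes \bsahat(-M^R,\bsGamma^R,\ol{\zz})
\]
is homotopy equivalent to $\cfhat(-M)$, and the explicit model of the pairing map induced by gluing Heegaard diagrams identifies $\x^L\otimes\x^R$ with $\x$ in $\cfhat(-\HH)$. Combining these identifications with the HKM characterization of the Heegaard Floer contact class shows that $c_D(M^L,\xi^L,\fol^L)\boxtimes c_A(M^R,\xi^R,\fol^R)$ maps to $[\x]=c(M,\xi)$ in $\hfhat(-M)$. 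The invariance established in Theorem~\ref{thm:ca-cd} then frees the conclusion from the particular choices of foliated open books and Heegaard diagrams.

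The main obstacle I anticipate is showing that the glued diagram $\HH$ is genuinely of Honda--Kazez--Mati\'c form and that the distinguished contact generator in $\HH$ coincides, on the nose and not merely after chain homotopy, with the tensor $\x^L\otimes\x^R$. This requires careful bookkeeping of the $\alpha$- and $\beta$-curves in a neighborhood of the gluing region, where Zarev's parametrizing arcs and basepoints must be reconciled with the basepoint placement prescribed by the HKM open book construction, and one must also confirm that admissibility is preserved under gluing. Once this local model near the gluing surface is established, the pairing theorem and the invariance results reduce the global statement to a single matching of generators in a standard neighborhood.
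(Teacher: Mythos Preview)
Your proposal is correct and follows essentially the same approach as the paper: pick sorted foliated open books, build the adapted bordered sutured diagrams, glue, and verify that the result is an HKM-style diagram for the glued open book with the box-tensor generator sitting in the right place. Your anticipated obstacle---checking that the glued diagram really is in HKM form and that the generators match on the nose---is exactly the content of the paper's Propositions~\ref{claim:discs}, \ref{prop:perturb}, and \ref{prop:curves}.

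One technical point you gloss over: gluing two bordered \emph{sutured} diagrams along $\zz$ does not produce a single-pointed Heegaard diagram for $-M$. What you get is a sutured diagram for $-M$ with $|\bsGamma|$ balls removed (equivalently, a multipointed diagram with $|\bsGamma|$ basepoints after capping), so the pairing theorem lands you in $\sfc(-M(|\bsGamma|),\bsGamma)\simeq\CFt(-M)$ rather than $\cfhat(-M)$. The generator $\x^L\otimes\x^R$ therefore represents the \emph{multipointed} contact invariant $c(\xi)\otimes\theta^{\otimes(|\bsGamma|-1)}$, and one needs Proposition~\ref{lem:multicontact} to pass from this to the ordinary $c(\xi)$. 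This does not change your strategy, but your displayed equivalence $\bsdhat\boxtimes\bsahat\simeq\cfhat(-M)$ is not quite right as stated.
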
 
A more precise version of this statement on the level of generators is given in Theorem~\ref{thm:gluing}. 

 One may also choose to forget the singular foliation and retain only the data of the dividing set on the convex boundary; this is captured by a natural map from a foliated open book to a partial open book.
On the level of Heegaard diagrams, this corresponds to converting a bordered sutured Heegaard diagram to a sutured Heegaard diagram; the procedure to do so was described by Zarev in \cite{bs-JG} and induces an isomorphism 
\[\sfh(-M,-\Gamma(\fol))\cong H_\ast\left(\bsahat(-M,\bsGamma,\overline{\zz})\right)\cdot \iota_+,\]
where  $\iota_+$ is an idempotent naturally determined by the foliation data. We show that under Zarev's isomorphism the bordered sutured invariant associated to a foliated open book  maps to the contact invariant in the sutured Floer homology associated to the corresponding partial open book.

\begin{ourtheorem}
\label{thm:hkm} 
Under the above isomorphism, $c_A(M,\xi, \fol)\cdot \iota_+$ is identified with the contact invariant $\mathrm{EH}(M,\Gamma(\fol), \xi)$ from  \cite{hkm09}. 
\end{ourtheorem}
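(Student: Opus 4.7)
The plan is to prove Theorem~\ref{thm:hkm} by a direct comparison of Heegaard diagrams. Given a sorted foliated open book for $(M,\xi,\fol)$, the construction behind Theorem~\ref{thm:ca-cd} produces an admissible bordered sutured Heegaard diagram $\HH$ for $(-M,\bsGamma,\overline{\zz})$ together with a distinguished generator $x$ representing $c_A(M,\xi,\fol)$. Simultaneously, the foliated open book determines, via the forgetful map to its associated partial open book, a Honda--Kazez--Mati\'c sutured Heegaard diagram $\HH'$ for $(-M,-\Gamma(\fol))$ with a distinguished generator $x'$ representing $\mathrm{EH}(M,\Gamma(\fol),\xi)$. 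The strategy is to apply Zarev's conversion to $\HH$, restrict to the $\iota_+$ summand, and identify the result with $\HH'$ up to Heegaard equivalence in such a way that the image of $x$ is carried to $x'$.

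First I would describe the forgetful map from foliated to partial open books at the level of boundary data: collapsing $\fol$ to $\Gamma(\fol)$ merges the positive and negative regions of the boundary foliation into the sutured dividing set and restricts the pages and monodromy of the foliated open book to those of the partial open book. Along the way I would verify that the distinguished generator $x$ naturally occupies the idempotent $\iota_+$, which should be immediate from the way the $\alpha$-arcs near the parametrized boundary of a sorted foliated open book align with the positive region of $\fol$.

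Second I would unpack Zarev's conversion. It attaches a standard ``cap'' to the parametrized boundary of $\HH$, with $\alpha$- and $\beta$-arcs dictated by the arc diagram $\overline{\zz}$ and selected by $\iota_+$; on the chain level this implements the projection of $\bsahat(-M,\bsGamma,\overline{\zz})$ onto the $\iota_+$ summand. The technical heart of the proof is to show that the resulting sutured diagram is related to $\HH'$ by a sequence of isotopies, handleslides, and (de)stabilizations, and that under these moves the image of $x$ in the $\iota_+$ summand is carried to $x'$. Once this is established, the invariance statements from Theorem~\ref{thm:ca-cd} on the bordered sutured side and from Honda--Kazez--Mati\'c on the sutured side upgrade the generator-level identification to a well-defined identification of homotopy classes.

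The main obstacle is this explicit diagram comparison. Although the foliated and partial open books encode closely related contact-geometric data, the bordered sutured and sutured constructions treat the boundary region differently: Zarev's cap curves introduce new intersection points that must be matched against the collapsed arcs in the HKM construction, and the two preferred generators sit in subtly different combinatorial configurations. Exhibiting the interpolating sequence of Heegaard moves, while simultaneously transporting $x$ to $x'$ and preserving admissibility, will require careful bookkeeping of the intersection patterns near the parametrized boundary. The key geometric input is the precise compatibility between the way $\iota_+$ selects positive boundary arcs and the way the partial open book records the dividing set.
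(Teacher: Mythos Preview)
Your proposal is correct and follows essentially the same approach as the paper: glue the bordered sutured diagram to Zarev's cap for the idempotent $\iota_+$, then relate the resulting sutured diagram to the Honda--Kazez--Mati\'c diagram for the associated partial open book by a sequence of handleslides and destabilizations that carry the distinguished generator to the $\mathrm{EH}$ generator. The paper executes precisely the bookkeeping you anticipate as the main obstacle, organizing the moves in two stages (first eliminating the $\widetilde\alpha_i,\widetilde\beta_i$ pairs for $i\in H_-$ coming from the cap, then those for $i\in H_+$), with the triangle-counting argument at each step paralleling the stabilization-invariance proof.
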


In particular $EH(M,\Gamma(\fol), \xi)$ vanishes in $\sfh(-M,-\Gamma(\fol))$ if and only if $c_A(M,\xi, \fol)\cdot \iota_+$ is zero in  $H_\ast\left(\bsahat(-M,\bsGamma,\overline{\zz})\right)\cdot \iota_+$. Since  
$c_A(M,\xi,\fol)=c_A(M,\xi, \fol)\cdot \iota_+$ and the differential of $\bsahat(-M,\bsGamma,\overline{\zz})$ respects the splitting by the idempotents, the class $c_A(M,\xi, \fol)\cdot \iota_+$ being zero in  $H_\ast\left(\bsahat(-M,\bsGamma,\overline{\zz})\right)\cdot \iota_+$ is equivalent to the class $c_A(M,\xi, \fol)$ being zero in $H_\ast\left(\bsahat(-M,\bsGamma,\overline{\zz})\right)$, which together with \cite[Theorem~1]{GHVHM} and \cite[Corollary~4.3 and Theorem~4.9]{hkm09} implies the following two corollaries.

\begin{ourcorollary} If $(M,\xi,\fol)$ is overtwisted or has positive $2\pi$-torsion, then the class $c_A(M,\xi, \fol)$ is zero in  $H_\ast\left(\bsahat(-M,\bsGamma,\overline{\zz})\right)$.
\end{ourcorollary}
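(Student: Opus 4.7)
The plan is to reduce the corollary to known vanishing theorems for the Honda--Kazez--Mati\'c sutured contact invariant $\EH$, using Theorem~\ref{thm:hkm} as the bridge between our bordered sutured invariant and $\EH$. The argument is essentially a repackaging of the chain of implications flagged in the paragraph immediately preceding the corollary statement, so I would organize the proof as a short three-step reduction rather than a new computation.

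First, I would establish that the vanishing of $c_A(M,\xi,\fol)$ in $H_\ast(\bsahat(-M,\bsGamma,\overline{\zz}))$ is equivalent to the vanishing of $c_A(M,\xi,\fol)\cdot\iota_+$ in the idempotent summand $H_\ast(\bsahat(-M,\bsGamma,\overline{\zz}))\cdot\iota_+$. This rests on two facts that are already in hand: the identity $c_A(M,\xi,\fol)=c_A(M,\xi,\fol)\cdot\iota_+$, which is part of how the invariant is defined (see Theorem~\ref{thm:ca-cd} and the surrounding construction), together with the observation that the differential on $\bsahat(-M,\bsGamma,\overline{\zz})$ respects the idempotent splitting, so the homology splits accordingly and no class in the $\iota_+$-summand can be a boundary of an element from a different summand.

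Second, I would apply Theorem~\ref{thm:hkm} to transport the question across Zarev's isomorphism
\[
\sfh(-M,-\Gamma(\fol))\;\cong\;H_\ast\!\left(\bsahat(-M,\bsGamma,\overline{\zz})\right)\cdot\iota_+,
\]
which identifies $c_A(M,\xi,\fol)\cdot\iota_+$ with $\EH(M,\Gamma(\fol),\xi)$. At this point the problem is entirely about $\EH$.

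Third, I would invoke the cited external vanishing results: \cite[Theorem~1]{GHVHM} in the overtwisted case, and \cite[Corollary~4.3 and Theorem~4.9]{hkm09} in the positive $2\pi$-torsion case. Each says precisely that $\EH(M,\Gamma(\fol),\xi)=0$ under the relevant hypothesis, which, running the reduction backward, gives the corollary.

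There is no real obstacle here; the corollary is bookkeeping on top of Theorem~\ref{thm:hkm} and the two external vanishing theorems. The only point that warrants a careful sentence is confirming that $c_A$ does lie in the $\iota_+$-summand, so that step one of the reduction is valid, but this is immediate from the construction of the preferred generator in Theorem~\ref{thm:ca-cd}.
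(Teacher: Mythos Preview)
Your proposal is correct and follows essentially the same route as the paper, which does not give a separate proof but rather deduces the corollary from the sentence immediately preceding it: the equivalence of vanishing in the full homology and in the $\iota_+$-summand, Theorem~\ref{thm:hkm}, and the cited external vanishing results. One small point to double-check is the attribution of the citations: the overtwisted case is \cite[Corollary~4.3]{hkm09} and the $2\pi$-torsion case is \cite[Theorem~1]{GHVHM}, which is the reverse of what you wrote.
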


\begin{ourcorollary} If $c_A(M,\xi, \fol)$ is zero in  $H_\ast\left(\bsahat(-M,\bsGamma,\overline{\zz})\right)$, then $(M,\xi,\fol)$ does not embed into any closed contact manifold $(N,\xi')$ with nonvanishing contact invariant. 
\end{ourcorollary}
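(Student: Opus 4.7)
The plan is to reduce this corollary to the analogous non-embedding theorem for the sutured contact invariant of Honda-Kazez-Mati\'c, using Theorem~\ref{thm:hkm} as the bridge between the bordered sutured invariant $c_A$ and the sutured contact class $\EH$. As observed in the paragraph preceding the corollary, vanishing of $c_A(M,\xi,\fol)$ in $H_\ast(\bsahat(-M,\bsGamma,\overline{\zz}))$ is equivalent to vanishing of $c_A(M,\xi,\fol)\cdot \iota_+$ in the $\iota_+$-summand of this homology; by Theorem~\ref{thm:hkm} this is in turn equivalent to vanishing of the sutured contact class $\EH(M,\Gamma(\fol),\xi) \in \sfh(-M,-\Gamma(\fol))$.

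The main step is then to invoke \cite[Theorem~4.9]{hkm09}: if $(M,\xi)$ is a contact manifold with convex boundary and dividing set $\Gamma$ for which $\EH(M,\Gamma,\xi) = 0$, then $(M,\xi)$ cannot be embedded as a contact submanifold with convex boundary and dividing set $\Gamma$ into any closed contact three-manifold $(N,\xi')$ with $c(\xi') \neq 0$. Specializing to $\Gamma = \Gamma(\fol)$ yields the corollary, since any contact embedding of the foliated contact manifold $(M,\xi,\fol)$ into $(N,\xi')$ is, a fortiori, an HKM-style embedding of $(M,\xi)$ with convex boundary realizing the dividing set $\Gamma(\fol)$.

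The only point requiring care is the compatibility between the two notions of embedding: one must verify that an embedding of the foliated contact manifold $(M,\xi,\fol)$ automatically presents $(M,\xi)$ as a contact submanifold of $(N,\xi')$ with convex boundary whose dividing set is $\Gamma(\fol)$, so that \cite[Theorem~4.9]{hkm09} applies on the nose. This is essentially definitional, because the signed singular foliation $\fol$ canonically determines $\Gamma(\fol)$, and any convex surface carrying $\fol$ as (a representative of) its characteristic foliation data carries $\Gamma(\fol)$ as its dividing set. Consequently there is no substantive obstacle beyond this bookkeeping, and the content of the corollary lies entirely in Theorem~\ref{thm:hkm} together with the Honda-Kazez-Mati\'c non-embedding theorem.
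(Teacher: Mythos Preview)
Your proof is correct and follows essentially the same approach as the paper: the paper deduces this corollary directly from the equivalence (via Theorem~\ref{thm:hkm} and the idempotent splitting) between vanishing of $c_A(M,\xi,\fol)$ and vanishing of $\EH(M,\Gamma(\fol),\xi)$, together with \cite[Theorem~4.9]{hkm09}. Your added remark on the compatibility of the two notions of embedding is a reasonable piece of bookkeeping that the paper leaves implicit.
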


\subsection{Further directions}

The results in this paper establish a framework for studying foliated open books via Heegaard Floer homology in concert with other combinatorial representations of contact manifolds.  This is an essential first step towards developing cut-and-paste technology for the Heegaard Floer contact invariant, and we briefly note further avenues for developing this theory.  

The defining data of a contact manifold with foliated boundary includes a choice of  a distinguished leaf in the foliation, which plays an essential role in constructing the associated Heegaard diagram.  It is natural to ask how the invariant depends on this choice; accordingly, this dependence is the subject of planned future work.   We anticipate that the  foliation on $\partial M$ may be reparameterized by the addition of a suitable foliated open book for $\partial M\times I$.  This is a special case of the more general process of  gluing a boundary-parallel layer onto $\partial M$. We  hope to understand the maps induced by such gluings in general, and to  compare our gluing operation to the sutured Floer homology gluing map from  \cite{hkm09}.

Theorems~\ref{thm:ca-cd} and \ref{thm:glue} are phrased terms of equivalences of elements, rather than elements. This subtlety arises because as of this writing there is not a naturality result for the bordered variants of Heegaard Floer homology; hence, the type $D$ bordered sutured invariant associated to $(M,\bsGamma,\zz)$ is the type $D$ homotopy equivalence class of $\bsdhat(M,\bsGamma,\zz)$, as defined in Section \ref{sssec:bs}. An analogue of Juh\'asz, Thurston, and Zemke's proof of the naturality of the Heegaard Floer homology of closed three-manifolds \cite{JTZNaturality} for bordered sutured Floer homology would immediately upgrade our invariants.

\subsection*{Organization}
Section~\ref{sec:prelim} reviews some necessary background in contact geometry and Heegaard Floer homology. Section~\ref{ssec:contact} discusses assumed background in contact geometry while Section~\ref{ssec:hf-background} contains a rapid review of Heegaard Floer homology and bordered sutured Heegaard Floer homology; in particular, Section~\ref{sssec:bs} introduces a notion of equivalence between elements in bordered sutured Heegaard Floer homology under homotopy equivalences of type $A$ and type $D$ structures. Finally Section~\ref{ssec:afob} summarizes the relevant material from \cite{LV} concerning foliated open books. In Section~\ref{sec:construction} we associate a bordered sutured manifold $(M, \bsGamma, \mathcal{Z})$ to a foliated  contact three-manifold.  We then show how the data of a sorted foliated open book gives rise to an admissible bordered Heegaard diagram $\mathcal{H}$ for $(-M, \bsGamma, \overline{\mathcal{Z}})$ and we identify preferred generators $\mathbf{x}_A$ and $\mathbf{x}_D$ in the associated bordered Floer homology modules $\bsahat(\mathcal{H})$ and $ \bsdhat(\mathcal{H})$.  Section~\ref{sec:invariance} proves Theorem~\ref{thm:ca-cd}, namely invariance of $\x_A$ and $\x_D$ up to the choices made in their definitions.  Section~\ref{sec:gluing} proves Theorem~\ref{thm:glue}, showing that we recover the ordinary contact invariants after gluing. Finally, Section~\ref{sec:relation} discusses the relationship of our invariants to the invariant in sutured Floer homology, proving Theorem~\ref{thm:hkm}.

\subsection*{Acknowledgements} 
This material is partially based upon work supported by the National Science Foundation under Grant  DMS-1439786 while the authors were in residence at the Institute for Computational and Experimental Research in Mathematics in Providence, RI, during the Women in Symplectic and Contact Geometry and Topology workshop. 

JEL would like to express her appreciation to the Australian Mathematical Society for their support in the form of an Anne Penfold Street Award. During the first six months of working on this paper VV had a position at the University of Strasbourg with the CNRS; VV is grateful to both for providing a calm yet active research environment.  

We thank Robert Lipshitz and Paolo Ghiggini for helpful conversations. We are also grateful to the anonymous referees for their careful reading and thoughtful comments.


\section{Preliminaries}\label{sec:prelim}
This section provides the background required to read the rest of the paper.  We provide references for various classical objects in contact geometry in Section~\ref{ssec:contact} and Heegaard Floer theory in Section \ref{ssec:hf-background}, along with more in-depth summaries of partial open books and the contact invariants in various flavors of Floer homology.  Because we are concerned with the relationships between various theories, we pay particular attention to the conventions for bordered, sutured, and bordered sutured  versions.  Finally, Section~\ref{ssec:afob} gives an efficient introduction to foliated open books.

\subsection{Assumed background in contact geometry} \label{ssec:contact} Throughout this article, we assume familiarity with many standard definitions in three-dimensional contact geometry, including contact structures; characteristic foliations;  convex surfaces and dividing sets; open book decompositions for closed three-manifolds, as in e.g., \cite{Geiges} and other standard references.  Although we will introduce foliated open books carefully in Section~\ref{ssec:afob} below, we briefly first recall the definition of a partial open book  supporting a contact structure on a manifold with boundary.
\\

\begin{definition}\label{def:pob}
	A \emph{partial open book} is a triple $(S, P, h)$ where 
	\begin{enumerate}
		\item $S$ is a compact, oriented, connected surface with boundary;
		\item $P=\cup P_i$ is a subsurface of $S$ such that the surface $S$ is obtained from $\overline{S\setminus P}$ by successively attaching 1-handles $P_i$; and		
		\item $h \co P\rightarrow S$ is an embedding which is the identity along $\partial P\cap \partial S$.
	\end{enumerate}
\end{definition}

To a partial open book, we can associate a sutured manifold $(M, \Gamma)$, as follows. (See \cite{hkm09} or \cite{EO11} for more details.) Let $H=S\times [-1,0]$ with the identification $(x,t)\sim (x,t')$ for $x\in \partial S$ and $t, t'\in [-1,0]$. Similarly, let $N=P\times [0,1]$ with the identification $(x,t)\sim (x,t')$ for $x\in \partial P\cap \partial S$ and $t, t'\in [0,1]$. Then $M=H\cup N$ where we identify $P\times \{0\}\subset \partial H$ with $P\times \{0\}\subset \partial N$ and $h(P)\times \{-1\}\subset \partial H$ with $P\times \{1\}\subset \partial N$. 
	The suture $\Gamma$ on $\partial M$ can be given as the union of oriented closed curves obtained by gluing the following arcs, modulo identifications:
	$$\Gamma = \overline{\partial S \setminus \partial P}\times \{0\}\cup - \overline{\partial P\setminus \partial S}\times \{1/2\}.$$

\begin{definition}
A contact structure $\xi$ is \emph{compatible} with the partial open book $(S,P,h)$ if for the corresponding sutured manifold $(M=H\cup N, \Gamma)$, the following hold: 

	\begin{enumerate}
		\item $\xi$ is tight on $H$ and $N$;
		\item $\partial H$ is a convex surface in $(M, \xi)$ with dividing set $\partial S\times \{0\}$; and
		\item $\partial N$ is a convex surface in $(M, \xi)$ with dividing set $\partial P\times \{1/2\}$.
	\end{enumerate}
\end{definition}

\subsection{Heegaard Floer theories} \label{ssec:hf-background} We assume familiarity with the various Heegaard Floer theories and provide only a brief review to establish notation and review details related to the contact invariants defined in \cite{oszc, hkm09, HKM09_HF}. 

\subsubsection{Earlier Heegaard Floer theoretic contact invariants}\label{sssec:hfc}

Using open books, Ozsv\'ath and Szab\'o defined a Heegaard Floer  invariant of a closed contact three-manifold \cite{oszc}. For a contact manifold $(M,\xi)$, this invariant is a class $c(\xi)$ in the Heegaard Floer homology $\hfhat(-M)$. 
In \cite{HKM09_HF}, Honda, Kazez, and Mati\'c gave an alternative description of $c(\xi)$. Their construction again uses open books and goes roughly as follows. An open book $(S,h)$ for $(M,\xi)$ induces a Heegaard splitting of $M$ into $U_1\coloneqq S\times \left[0, 1/2\right]$ and $U_2\coloneqq S\times \left[1/2, 1\right]$ with Heegaard surface $\Sigma=(S\times \{1/2\})\cup_B-(S\times\{0\})$. Let $\{a_i\}$ be a collection of properly embedded arcs on $S$ that cut $S$ into a disk.  For all $i$, let $b_i$ be a small perturbation of $a_i$ that moves the endpoints in the positive direction along $\partial S$, so that  $b_i$ intersects $a_i$ in one point.  Fix a basepoint $z$ on $\bdy S$ away from the thin strips cobounded by the $\{a_i\}$ and $\{b_i\}$. It is clear that $\Sigma = \bdy U_1 = -\bdy U_2$,  and that the $\alpha_i := \bdy(a_i\times \left[0,1/2\right])$ bound compressing disks for $U_1$ and the $\beta_i := \bdy(b_i\times \left[1/2,1\right])$ bound compressing disks for $U_2$. Thus, $(\Sigma, \alphas, \betas, z)$ is a Heegaard diagram for $M$. One can see the curves as 
  \begin{align*}
 \alpha_i &= a_i\cup -a_i \subset (S\times \{1/2\})\cup_B-(S\times\{0\}),\\
\beta_i &= b_i\cup -h(b_i) \subset (S\times \{1/2\})\cup_B-(S\times\{0\}).
 \end{align*}
Then 
$c(\xi)\in \hfhat(-M)$
 is defined as the homotopy equivalence class of the unique generator of $\cfhat(\Sigma, \betas, \alphas, z)$ fully supported on $S\times \{1/2\}$.

 Using partial open books, Honda, Kazez, and Mati\'c then extended the above construction to define an invariant $EH(M, \Gamma, \xi)$ of contact three-manifolds with convex boundary. In this construction, one begins with a partial open book $(S,P, h)$ for $(M, \Gamma, \xi)$, where $S$ is built up from $\overline{S\setminus P}$ by the addition of 1-handles $P_i$, as in Definition~\ref{def:pob}. The roles of $U_1$ and $U_2$ are now played by $\big(P\times[-1/2,0]\big)\cup \big(S\times[0,1/2]\big)/\sim$ and $\big(S\times[1/2,1]\big)\cup \big(P\times [-1,-1/2]\big)/\sim$, respectively, and the curves $\{a_i\}$ are the co-cores of the 1-handles $P_i$. The rest of the construction is as above, except that the final generator supported on $P\times \{-1/2\}$ defines an element in the sutured Floer homology $\sfh(-M, -\Gamma)$. Under certain technical conditions, the authors also defined a gluing map for sutured Floer homology that respects the contact invariants  \cite{hkm08}.

Before briefly reviewing bordered sutured Floer homology, we discuss a straightforward generalization of the contact invariant to multipointed Heegaard diagrams. We begin by constructing a Heegaard diagram analogous to the one above, except that we allow additional arcs $\{a_i\}$ that cut $S$ into $n$ disks. We place a basepoint in each of the disks thus obtained. This results in a multipointed Heegaard diagram $(\Sigma, \alphas, \betas, \mathbf{z})$ for $M$. The unique generator $\mathbf{x}$ on $S\times \{1/2\}$ defines an element in $\HFt(\Sigma, \betas,  \alphas, \mathbf{z}) \cong \hfhat(-M)\otimes H_*(T^{n-1})$. Here $\HFt$ is the homology with respect to the differential that avoids all basepoints, and  $H_*(T^{n-1}) \cong H_*(S^1)^{\otimes(n-1)}$ is the ordinary singular homology of $T^{n-1}$.

An adaptation of the argument of  part (5) of \cite[Theorem 3.1]{BVV} shows the following:
\begin{proposition}\label{lem:multicontact}
Let $(S,h)$ and $(S',h')$ be two open book decompositions compatible with $(M,\xi)$. Let   $\{a_i\}$ and $\{a_i'\}$ be sets of cutting arcs that cut up $S$ and $S'$ into $n$ and $n'$ disks, respectively, with $n<n'$. The graded isomorphism between the Heegaard Floer homologies induced by Heegaard moves, including index $0$ and $3$ stabilizations,  \[
\HFt(\Sigma', \betas',  \alphas', \mathbf{z}') \otimes H_*(T^{n'-n}) \to
\HFt(\Sigma, \betas,  \alphas, \mathbf{z}) \]
 maps the homology class to $[\mathbf{x}'] \otimes \theta^{\otimes{(n'-n)}}$ to $[\mathbf{x}]$, where  $\theta$ corresponds to the lower-degree generator of  $H_*(S^1)$. \qed
\end{proposition}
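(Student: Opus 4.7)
The plan is to reduce the statement to two independent elementary operations and track the contact generator through each. By the Giroux correspondence, the open books $(S,h)$ and $(S',h')$ admit a common positive stabilization $(S'',h'')$, which one may equip with a cutting arc system producing $n'' \geq n'$ disks. It then suffices to establish: \textbf{(A)} for a fixed open book, enlarging the cutting arc system so the number of disks grows from $k$ to $k+1$ carries $[\mathbf{x}]$ to $[\mathbf{x}]\otimes \theta$ under the Heegaard-move isomorphism; and \textbf{(B)} a single positive stabilization of the open book, performed away from the basepoints and keeping the disk count fixed, sends $[\mathbf{x}]$ to $[\mathbf{x}']$. Iterating (A) then accounts for the $\theta^{\otimes(n'-n)}$ factor, while iterating (B) handles the change of open book.

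For claim (B), the original argument of \cite[Theorem 3.1(5)]{BVV} produces an explicit sequence of $\alpha$- and $\beta$-handleslides and isotopies relating the Heegaard diagrams associated to $(S,h)$ and its positive stabilization, and verifies that the contact generator on $S\times\{1/2\}$ is carried to the contact generator on the stabilized page. A positive stabilization takes place in a neighborhood of a properly embedded arc on the page and so may be arranged to avoid the disks housing the basepoints. One then observes that the entire sequence of Heegaard moves is supported away from $\mathbf z$, so it lifts to a graded isomorphism of the $\widetilde{HF}$-complexes that carries $[\mathbf x]$ to $[\mathbf x']$.

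For claim (A), adding a cutting arc $a_{k+1}$ that splits an existing disk into two produces new curves $\alpha_{k+1}=a_{k+1}\cup -a_{k+1}$ and $\beta_{k+1}=b_{k+1}\cup -h(b_{k+1})$, meeting in a single point $x_{k+1}$ on $S\times\{1/2\}$ and possibly further points on $S\times\{0\}$, and requires a new basepoint in one of the two freshly created disks. The strategy is to exhibit this enlarged diagram as the result of an index $0/3$ stabilization of the original diagram (introducing a small bigon with a single basepoint and two intersections, one of which is $x_{k+1}$) followed by a sequence of $\beta$-handleslides that drag $\beta_{k+1}$ across the remaining $\beta_i$ to acquire the factor $-h(b_{k+1})$ on $S\times\{0\}$. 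Under the Künneth isomorphism $\widetilde{HF}(\text{new})\cong \widetilde{HF}(\text{old})\otimes H_\ast(S^1)$ arising from the index $0/3$ stabilization, the extended contact generator $\mathbf{x}\cup\{x_{k+1}\}$ is identified with $[\mathbf x]\otimes \theta$, where $\theta$ is the lower-degree generator; the handleslides do not alter this identification up to sign because the contact generator is always the unique generator fully supported on the positive page.

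The main obstacle is the last identification in claim (A): one must verify that the specific intersection point $x_{k+1}$ selected by the contact construction corresponds to $\theta$ rather than to the higher-degree class in $H_\ast(S^1)$. This amounts to a local calculation comparing the position of $x_{k+1}$ to the new basepoint within the stabilization bigon, along with checking that the subsequent handleslides preserve the relative grading. Careful choice of the direction in which $b_{k+1}$ is perturbed, together with the orientation conventions of \cite{HKM09_HF}, pins down the grading and identifies $x_{k+1}$ with $\theta$; iterating $n'-n$ times then completes the proof.
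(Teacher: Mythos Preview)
The paper does not actually prove this proposition: the sentence preceding it asserts that the result follows by adapting \cite[Theorem~3.1(5)]{BVV}, and the statement is marked with a \qed. Your proposal is therefore more detailed than what the paper offers, and its overall architecture---reducing to (A) adding one cutting arc and (B) a single positive open-book stabilization away from the basepoints, after passing to a common Giroux stabilization---is the natural adaptation one would extract from the Honda--Kazez--Mati\'c and Baldwin--Vela-Vick--V\'ertesi arguments.

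That said, claim (A) as written has a gap. An index $0/3$ stabilization introduces a new $\alpha$-circle bounding a disk in $\Sigma$, but for a generic cutting arc $a_{k+1}$ the curve $\alpha_{k+1}=a_{k+1}\cup(-a_{k+1})$ does not bound: the doubled subregion of $S_\epsilon\cup_B(-S_0)$ it encloses typically has other $\alpha_i$ on its boundary. A sequence of $\beta$-handleslides alone cannot fix this, so your description is internally inconsistent---you cannot have $x_{k+1}\in\alpha_{k+1}$ already sitting on the stabilization bigon and then reach the final diagram by $\beta$-slides only. The clean repair is to insert an explicit step (C): invariance of the multipointed contact generator under change of cutting-arc system via arc slides, i.e., the multipointed analogue of \cite[Lemma~3.5]{HKM09_HF}. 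Granting (C), in (A) you may take $a_{k+1}$ to be a short boundary-parallel arc; since $h$ is the identity near $\partial S$ one then has $h(b_{k+1})=b_{k+1}$, so $\alpha_{k+1}$ and $\beta_{k+1}$ are genuinely small isotopic circles and the enlarged diagram \emph{is} an index $0/3$ stabilization with no handleslides required. The identification of $x_{k+1}$ with the lower-degree class $\theta$ then becomes the local grading check you allude to. You need (C) in any case to reconcile the two arc systems obtained on the common stabilization $(S'',h'')$ when approaching from $(S,h)$ versus from $(S',h')$, so making it explicit costs nothing extra.
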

This means that up to homotopy equivalence, the multipointed contact invariant is simply $c(\xi)\otimes \theta^{\otimes{(n-1)}}$, where $n$ is the number of basepoints.

We will also make use of the fact that the multipointed Heegaard Floer homology for a closed three-manifold $M$ can be interpreted as the sutured Floer homology of $M$ with balls removed, as follows.
Let $(\Sigma, \alphas, \betas, \mathbf{z})$ be a multipointed Heegaard diagram for  $M$ with $n$ basepoints, and for each basepoint $z\in \mathbf{z}$, let $D^2_z$ be a disk neighborhood of $z$. 
Then $(\Sigma\setminus \cup_\mathbf{z} D^2_z, \alphas, \betas)$ is a sutured Heegaard diagram for  $M(n)  = (M\setminus \cup_{z\in \mathbf{z}}B^3_{z}, \cup_{z\in \mathbf{z}}\partial D^2_z)$. As the two Heegaard diagrams are identical outside the basepointed/sutured regions, the chain complex  $\CFt(\Sigma, \alphas, \betas, \mathbf{z})$ is isomorphic to the chain complex $\SFC(\Sigma\setminus \cup_\mathbf{z} D^2_z, \alphas, \betas)$. Thus, we can compute the multipointed Heegaard Floer homology $\HFt(\Sigma, \betas,  \alphas, \mathbf{z})$ as the sutured Floer homology $\sfh(\Sigma\setminus \cup_\mathbf{z} D^2_z, \alphas, \betas)$. See also \cite[Proposition 9.14]{sfh}.

\subsubsection{Bordered sutured Floer homology}\label{sssec:bs}

Lipshitz, Oszv{\'a}th, and Thurston refine Heegaard Floer homology to a bordered variant associated to a three-manifold with parametrized boundary \cite{LOTbook, hfmor}, and Zarev \cite{bs, bs-JG} further refines the invariant to an invariant of sutured manifolds with partially parameterized boundary. We briefly discuss Zarev's constructions \cite[Section 3]{bs-JG}. Note that the following description is indicative, rather than complete; for a review of the algebraic definitions involved, see \cite[Section 2]{LOTbook}.

Recall that an \emph{arc diagram} $\mathcal Z = (Z,a,m)$ consists of a finite collection of oriented line segments $Z$, commonly called arcs in the literature, a collection of points $a=(a_1,\cdots, a_{2k})$ on $Z$, a matching $m$ of the points in $a$ into pairs, and a ``type,'' $\alpha$ or $\beta$. To every arc diagram $\zz$ one associates an $\mathcal A_{\infty}$-algebra $\mathcal A(\zz)$ generated by tuples of  oriented arcs in $[0,1]\times Z$  such that each arc connects some $(0,a_i)$ to some $(1, a_j)$ with $a_j\geq a_i$, up to an equivalence relation imposed by the matching. The ground ring of idempotents $\mathcal I(\zz)$ of this algebra consists of elements $\iota$ corresponding to tuples of horizontal strands $[0,1]\times \{a_k\}$ in $[0,1]\times Z$. One further associates to $\zz$ a graph $G(\zz)$ and a surface $F(\zz)$.  The graph is constructed by beginning with $Z$  and adding an edge between each pair of matched points, while  $F(\zz)$ is constructed from $Z \times \left[0,1\right]$ by attaching one-handles to neighborhoods of the matched points on $Z\times \{0\}$. 

A bordered sutured manifold $(M,\bsGamma,\zz)$    consists of the following:
\begin{itemize}
\item a three-manifold $M$ whose boundary decomposes as a \emph{bordered} part $F$ and a \emph{sutured} part $T$;
\item an arc diagram $\mathcal{Z}=(Z,a,m)$ and an identification of $F$ with $F(\zz)$;  
\item a \emph{dividing set} $\bsGamma$ which is a properly embedded, oriented one-manifold in $T$ with boundary $\bdy \bsGamma = -\bdy(Z\times \{\frac 1 2 \})$, so that $\bsGamma$ decomposes $T$ into  the union of $R_+(\bsGamma)$ and  $R_-(\bsGamma)$ with $\partial R_{\pm}(\bsGamma)\setminus\bdy F=\pm \bsGamma$. 
\end{itemize}
The components of the dividing set are also referred to as \emph{sutures}. 

A $\beta$-bordered Heegaard diagram $\HH=(\Sigma, \alphas, \betas, \zz)$ consists of a compact surface $\Sigma$ with no closed components; a collection of pairwise disjoint, properly embedded circles $\alphas$; a collection of pairwise disjoint, properly embedded circles $\betas^{c}$ and of properly embedded arcs $\betas^{a}$, with $\betas = \betas^a \cup \betas^c$; and an arc diagram $\zz$ of $\beta$-type, together with an embedding $G(\zz)\rightarrow \Sigma$ which maps $Z$ to a subset of $\partial \Sigma$ and the edges of $G(\zz)$ connecting matched points to the arcs $\betas^c$. One also requires that the maps $\pi_0(\bdy\Sigma \setminus Z)\to \pi_0(\Sigma\setminus \alphas)$ and $\pi_0(\bdy\Sigma \setminus Z)\to \pi_0(\Sigma\setminus \betas)$ be surjective. (An $\alpha$-bordered diagram is defined similarly, mutatis mutandis.) One constructs a bordered sutured manifold  $(M,\bsGamma,\zz)$ from $\HH$ as follows. The three-manifold $M$ is obtained by attaching two-handles to $\Sigma\times[0,1]$ along $\alphas \times \{1\}$ and $\betas^c\times \{0\}$ circles; the dividing set $\bsGamma$ appears as $\bsGamma = (\partial \Sigma \setminus Z) \times \left\{\frac{1}{2}\right\}$, and $F(\zz)$ is a neighborhood of $(Z \times [0,1])\cup (\betas^a\times \{0\})$.  Another way to view $(M,\bsGamma,\zz)$ as coming from $\HH$, which fits better with Morse theory, is to also attach ``halves of two-handles" along $\betas^a \times \{0\}$ and see $F(\zz)$ as $Z\times [0,1]$ together with the intersection of the thickened cores of the partial two-handles with $\bdy M$.

Let ${\bf x}$ and ${\bf y}$ denote tuples of intersection points between the $\alphas$ and $\betas$ such that each $\alpha$-circle is used exactly once, each $\beta$-circle is used exactly once, and each $\beta$-arc is used no more than once; these will ultimately be the generators of the bordered sutured modules. We recall that the set of homology classes $\pi_2({\bf x}, {\bf y})$ connecting ${\bf x}$ to ${\bf y}$ is defined as follows. We let $I_s =[0,1]$ and $I_t = [-\infty, \infty]$ be intervals, and consider the relative homology group
\[H_2(\Sigma \times I_s \times I_t, ((\alphas \times \{1\}) \cup (\betas \times \{0\}) \cup (Z \times I_s))\times I_t), ({\bf x}\times I_s\times \{-\infty\}) \cup ({\bf y} \times I_s \times \{\infty\})).\]
The set $\pi_2({\bf x}, {\bf y})$ denotes elements of this group which are sent to the fundamental class of $({\bf x}\times I_s)\cup ({\bf y}\times I_s)$ by the map which applies the boundary homomorphism and then collapses the remainder of the boundary. Any homology class has a unique corresponding \emph{domain}, that is, a linear combination of the components of $\Sigma \setminus (\alphas \cup \betas)$ obtained by projection. A domain is \emph{provincial} if its boundary has no intersection with $Z$. 

The set $\pi_2({\bf x}, {\bf x})$ is the set of $\emph{periodic domains}$. We recall that $\HH$ is said to be \emph{provincially admissible} if every provincial periodic domain has both positive and negative coefficients and \emph{admissible} if every periodic domain has both positive and negative coefficients.

To a provincially admissible $\beta$-bordered Heegaard diagram $\HH=(\Sigma, \alphas, \betas, \zz)$ equipped with an admissible almost complex structure, Zarev associates a left type $D$ module $\bsdhat(\HH)$ over $\mathcal A(-\zz)$ and right type $A$ module $\bsahat(\HH)$ over $\mathcal A(\zz)$. In both cases the generators are tuples of intersection points as described above.\footnote{We assume we are in the nondegenerate case of having a nonempty set of generators. In particular, $|\alphas|\geq |\betas^c|$.}  The type $A$ chain homotopy equivalence class of $\bsahat(\HH)$ is an invariant of the associated bordered sutured manifold $(M, \bsGamma, \zz)$, written $\bsahat(M, \bsGamma, \zz)$. More precisely, given bordered sutured Heegaard diagrams $\HH$ and $\HH'$ for $(M, \bsGamma, \zz)$, there is a sequence of Heegaard moves connecting them as in \cite[Proposition 4.5]{bs}, which induce a type $A$ homotopy equivalence $f$ from $\bsahat(\HH)$ to $\bsahat(\HH')$. This type $A$ homotopy equivalence is a collection of maps
\[f_i:\bsahat(\HH)\otimes \mathcal A(\zz)^{\otimes(i-1)}\to \bsahat(\HH')\]
indexed by $i\ge 1$, satisfying certain conditions (see \cite[Section 2]{LOTbook}). Consider $\x\in\bsahat(\HH)$ and $\x'\in\bsahat(\HH')$ so that 
\[m_{i+1}(\x,a_1,\cdots,a_i)=0\quad\quad\text{and}\quad\quad m_{i+1}(\x',a_1,\cdots,a_i)=0\]
for all $i\ge 0$ and $a_i\in\mathcal{A}(\zz)$. We say $\x$ and $\x'$ are \emph{equivalent} if there exists a homotopy equivalence $f$ from $\bsahat(\HH)$ to $\bsahat(\HH')$ so that $f_1(\x)=\x'$ and $f_{i+1}(\x,a_1,\dots,a_i)=0$ for all $i>0$ and $a_i\in\mathcal{A}(\zz)$. Similarly, there is a type $D$ homotopy equivalence from $\bsdhat(\HH)$ to $\bsdhat(\HH')$. If $\x\in \bsdhat(\HH)$ and $\x'\in\bsdhat(\HH')$ are both closed elements (meaning that $\delta^1(\x)=\delta^1(\x')=0$), we say $\x$ and $\x'$ are \emph{equivalent} if there is a homotopy equivalence from  $\bsdhat(\HH)$ to $\bsdhat(\HH')$ that maps $\x$ to $\x'$.

Given two bordered sutured manifolds $(M_1, \bsGamma_1, \zz)$ and $(M_2, \bsGamma_2, -\zz)$, we may glue along $\zz$ to obtain a sutured manifold $(M, \Gamma)$. Given $\beta$-bordered sutured Heegaard diagrams $\HH_1 = (\Sigma_1, \alphas_1, \betas_1, \zz)$ for $(M_1,  \bsGamma_1, \zz)$ and $\HH_2 = (\Sigma_2, \alphas_2, \betas_2, -\zz)$ for $(M_1,  \bsGamma_1, -\zz)$, there is a glued Heegaard diagram $\HH = (\Sigma, \alphas, \betas)$ where $\Sigma = \Sigma_1 \bigcup_{Z} \Sigma_2$; the set $\alphas$ is the union of $\alphas_1$ and $\alphas_2$; and the set $\betas$ is the union of $\betas_1$ and $\betas_2$, together with the circles formed by gluing the arcs in $\betas_1^a$ and $\betas_2^a$ along their endpoints on $Z$.

When at least one of the two diagrams is admissible, there is a gluing map
\[
\bsahat(\HH_1) \boxtimes_{\mathcal{A}(\zz)} \bsdhat(\HH_2) \rightarrow \sfc(\HH)
\]
which induces a chain homotopy equivalence of vector spaces. Chain homotopy equivalences of type $A$ and type $D$ modules induce chain homotopy equivalences of the box tensor product, so there is a well-defined equivalence class of $\F_2$ vector spaces $\bsahat(M_1, \bsGamma_1, \zz) \boxtimes_{\mathcal{A}(\zz)} \bsdhat(M_2, \bsGamma_2, \ip{-}\zz)$, and an equivalence of chain homotopy equivalence classes of vector spaces
\[
\bsahat(M_1, \bsGamma_1, \zz) \boxtimes_{\mathcal{A}(\zz)} \bsdhat(M_2, \bsGamma_2, \ip{-}\zz) \rightarrow \sfc(M, \Gamma).
\]

\subsection{Abstract foliated open books}\label{ssec:afob}

In this section, we provide an  overview of the essential definitions and properties of foliated open books that we will rely on in the remainder of this article.   Readers are encouraged to see \cite{LV} for more detail.

\begin{definition}\cite[Definition 3.14]{LV}\label{def:afob} An \emph{abstract foliated open book} is a tuple $(\{S_i\}_{i=0}^{2k}, h)$ where $S_i$ is a surface with 
boundary $\partial S_i=B\cup A_i$ \footnote{By a slight abuse of notation we denote the ``constant" part of the boundary of $S_i$ by $B$ for all $i$.} and corners at $E=B\cap A_i$ such that 
\begin{enumerate}
\item for all $i$, $A_i$ is a union of intervals;
\item with the boundary orientation, each component $I$ of $A_i$ is oriented from a corner labeled $e_+ = e_+(I)\in E_+$ to a corner labeled $e_- = e_-(I)\in E_-$, and $E=E_+\cup E_-$;

\item the surface $S_{i}$ is obtained from $S_{i-1}$ by either 
\begin{itemize}
\item[-]\textbf{(add):} attaching a 1-handle along two points $\{p_{i-1}, q_{i-1}\}\in A_{i-1}$, or 
\item[-]\textbf{(cut):} cutting $S_{i-1}$ along a properly embedded arc $\gamma_{{i}}$ with endpoints in $A_{{i-1}}$ and then smoothing.\footnote{The indices of $\gamma_i$ in this paper are shifted compared to \cite{LV}, where the cutting arcs were denoted by $\gamma_{i-1}$.}
\end{itemize}
\end{enumerate}

Furthermore,  $h \colon S_{2k}\rightarrow S_0$ is a diffeomorphism between cornered surfaces that
preserves $B$ pointwise. 
\end{definition}
We denote by $H_+$ (resp.~$H_-$) the set of indices $i$ for which $S_{i}$ is obtained from $S_{i-1}$ by cutting  (resp.~adding), so that we have a partition $[2k]=H_+\cup H_-$. (Here $[n]$ denotes the set $\{1,\dots, n\}$.)

Note that the operations \textbf{(add)} and \textbf{(cut)} are  inverses  of each other.  Let $\gamma$ denote the cocore of a handle attached along points $p$ and $q$.  Then cutting along $\gamma$ cancels the handle attachment, and vice versa. We will use the following notation to describe this:
\[S\xrightarrow[{p,q}]{\text{\textbf{add}}} S' \text{    and    } S'\xrightarrow[{\gamma}]{\text{\textbf{cut}}}S.\] 

\begin{example}\label{ex:torus}
Figure~\ref{fig:afobtorus} shows a first example of a foliated open book.  The complete set of labels is shown for the indicative page $S_0$, while the attaching spheres and cutting arcs are shown on all pages to the right.  The binding is shown in bold. The monodromy $h \colon S_4\rightarrow S_0$ is the identity, and we have the partition $H_-=\{2,4\}$, $H_+=\{1, 3\}$. 
\begin{figure}[h]
	\labellist
	\pinlabel {$S_0$} [ ] at 23 -6
	\pinlabel {$S_0$} [ ] at 140 -6
	\pinlabel {$S_1$} [ ] at 178 -6
	\pinlabel {$S_2$} [ ] at 216 -6
	\pinlabel {$S_3$} [ ] at 258 -6
	\pinlabel {$S_4$} [ ] at 297 -6
	\pinlabel {$A_0$} [ ] at 5 74
	\pinlabel {$\gamma_1$} [ ] at 11 35
	\pinlabel {$A_0$} [ ] at 30 48
	\pinlabel {$e_+$} [l] at 37 80
	\pinlabel {$e_-$} [l] at 37 58
	\pinlabel {$e_+$} [l] at 37 25
	\pinlabel {$e_-$} [l] at 37 2
	\pinlabel {$B$} [l] at 37 70
	\pinlabel {$B$} [l] at 37 14
	\endlabellist
\begin{center}
\includegraphics[scale=1]{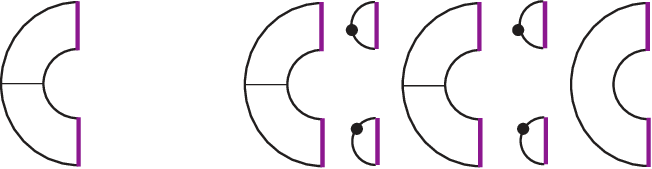}
\end{center}
\caption{A foliated open book with $k=2$.}\label{fig:afobtorus}
\end{figure}
\end{example}

\subsubsection{Supported contact structures}
\label{sec:supp-cs}

The construction of a compact manifold is natural from the data of a foliated open book: pairs of successive pages define cornered cobordisms.  As described in more detail below, we concatenate these to form a manifold with boundary and collapse the resulting components of $B\times I$ to circles and intervals again labeled $B$.  The final page glues to the initial page via $h$. The resulting  manifold $M$ associated to the foliated open book $(\{S_i\}_{i=0}^{2k}, h)$ retains partial information about the sequence of cuts and adds in the abstract data. This information is encoded in the form of boundary decorations on $\partial M$, and we introduce the kind of foliation we will consider before explaining how it arises:

\begin{definition}  A \emph{signed singular foliation} is an equivalence class of smooth vector fields (up to multiplication by smooth positive functions) that vanish at only finitely many isolated points, called \emph{singular points}. The  complement of the singular points has an open cover such that in each ball, the integral curves of the vector field are a product of oriented intervals, while elliptic and four-pronged hyperbolic singularities patch these charts together. Elliptic singularities may be classified as positive (sources) or negative (sinks), and the hyperbolic singularities have signs determined by input external to the defining vector field.  
\end{definition}
 
 See Figure \ref{fig:sing} for local models of the integral curves.

As is  standard, leaves that enter (or exit) the hyperbolic singularities are called \emph{stable (or unstable) separatrices}.   

\begin{figure}[h]
\begin{center}
\includegraphics[scale=.8]{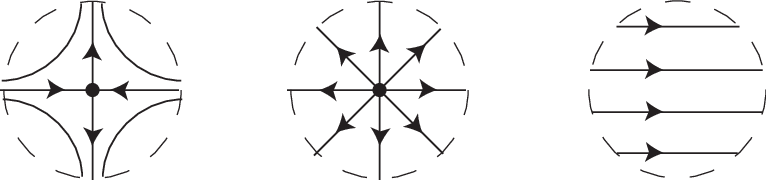}
\caption{Left: a hyperbolic singularity.  Center: a positive elliptic singularity. Right: a regular foliation on an open set.}\label{fig:sing}
\end{center}
\end{figure}

Each elliptic point $e$ induces a cyclic order on the subset of hyperbolic points with separatrices terminating at $e$.  If $e$ is positive (respectively, negative), the order increases as the separatrices are encountered in a counterclockwise (clockwise) path around $e$.  
\begin{definition}
A signed singular foliation is \emph{ordered} if there is a cyclic order on the set of all hyperbolic points which is compatible with the cyclic orders associated to each of its elliptic points.
\end{definition}
  
Beginning with an abstract foliated open book, we now build a manifold whose boundary is naturally equipped with an ordered signed singular foliation. Each pair of successive pages defines an elementary cobordism $M_i$
from $S_{i-1}$ to $S_{i}$ with vertical boundary $(B\times I)\cup V_i$, where $V_i$ is the union of a single saddle and of the collection of products $A_{i-1}\times I$ for any components $A_{i-1}$ that are left unchanged.  More specifically, the saddle connects either the components of $A_{i-1}$ containing the endpoints of $\gamma_{i}$  or the component(s) containing $p_{i-1}$ and $q_{i-1}$ with the  obtained component(s) of $A_i$ that have the same endpoints. See Figure~\ref{fig:cobord}. 

\begin{figure}[h]
	\labellist
	\pinlabel {$B$} [ ] at 33 122
	\pinlabel {$B$} [ ] at 33 23
	\pinlabel {$B$} [ ] at 285 23
	\pinlabel {$B$} [ ] at 285 122
	\pinlabel {$A_i$} [ ] at 110 147
	\pinlabel {$A_i$} [ ] at 203 147
	\pinlabel {$A_{i-1}$} [ ] at 127 -3
	\pinlabel {{\color{lightgray}$A_{i-1}$}} [ ] at 96 49
	\pinlabel {$\gamma_i$} [ ] at 151 20
	\pinlabel {$S_{i-1}$} [ ] at 228 24
	\pinlabel {$S_i$} [ ] at 227 121
	\endlabellist
\begin{center}
\includegraphics[scale=.6]{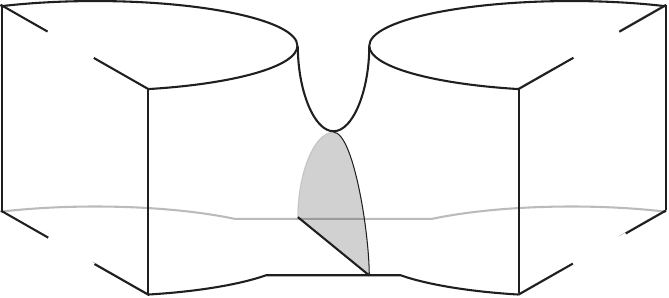}
\caption{The elementary cobordism associated to cutting $S_{i-1}$ along $\gamma_i$, shown before collapsing $B\times I$.}\label{fig:cobord}
\end{center}
\end{figure}

Concatenating these $M_i$ and gluing the final $S_{2k}$ to the initial $S_0$ via $h$ yields a manifold with boundary $(B\times S^1) \cup V$, where $V$ is the (circular) union of the $V_i$.  The singular foliation on $V$ has $2k$ singular points  associated to the transitions between topological types of the pages, while the regular leaves may be identified with curves of the form $A_i\times \{t\}$. 
If $S_{i-1}\xrightarrow{\text{\textbf{add}}} S_{i}$, then the corresponding hyperbolic point $h_i$ is negative; otherwise, it is positive. We denote the set of hyperbolic points with respect to this 
partition $H=H_+\cup H_-$. The signs match the partition of $[2k]$ introduced after Definition~\ref{def:afob}.
 Collapsing the components $B\times S^1$ to a single copy of $B$ yields a manifold with foliated boundary $(M, \partial M, \mathcal{F})$. (See Definition~\ref{def:mfb}, below, for a precise definition of this term.) Each endpoint  of $B$ labeled $e_+$ becomes a positive elliptic point of the foliation, which we again denote by $e_+$; likewise, an endpoint $e_-$ becomes a negative elliptic point $e_-$.  
 
 \begin{example} We return to the foliated open book introduced in the first example. The associated smooth manifold is the solid torus shown in Figure~\ref{fig:afobtorusmfd}. The images of $A_1$ and $A_2$ are shown in green and blue, respectively, on both the manifold on the left and in the associated boundary foliation on the right.

\begin{figure}[h]
	\labellist
	\pinlabel {$e_+$} [bl] at 37 144
	\pinlabel {$e_-$} [l] at 37 99
	\pinlabel {$e_+$} [l] at 37 68
	\pinlabel {$e_-$} [tl] at 37 22
	\pinlabel {$e_+$} [ ] at 225.5 71.5
	\pinlabel {$e_+$} [B] at 140 154
	\pinlabel {$e_-$} [B] at 216 154
	\pinlabel {$e_+$} [B] at 294 154
	\pinlabel {$e_-$} [ ] at 294 80
	\pinlabel {$e_+$} [B] at 294 3
	\pinlabel {$e_-$} [B] at 216 3
	\pinlabel {$e_+$} [B] at 140 3
	\pinlabel {$e_-$} [] at 140 80
	\pinlabel {$h_+$} [B] at 184 33
	\pinlabel {$h_-$} [B] at 254 33
	\pinlabel {$h_-$} [B] at 184 104
	\pinlabel {$h_+$} [B] at 254 104
	\endlabellist
\begin{center}
\includegraphics[scale=1]{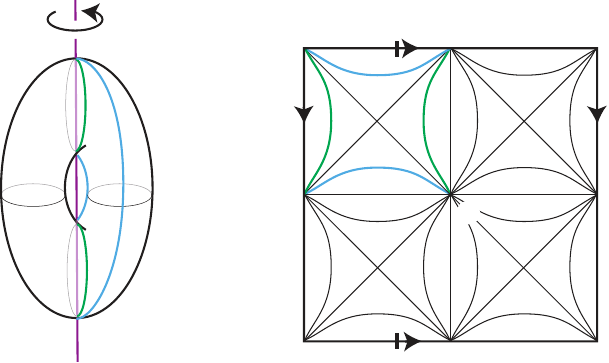}
\caption{The smooth manifold (a solid torus) associated to the  foliated open book from Figure~\ref{fig:afobtorus}  together with the foliation $\mathcal{F}$ on $\partial M$.}\label{fig:afobtorusmfd}
\end{center}
\end{figure}
\end{example}

 In order to define the compatibility between contact structures and foliated open books, we  examine the boundary foliation $\mathcal{F}$ more closely.  To any smooth manifold constructed  as above, we may associate  an  $S^1$-valued Morse function $\pi \colon  M \setminus B \rightarrow S^1$  whose level sets are the pages of the open book and which restricts to the boundary as an $S^1$-valued Morse function $\pit:=\pi|_{\partial M}\colon \partial M\setminus E\to S^1$ with the same critical points as $\pi$. The function $\pi$ has only index 1 and 2 critical points, and all of these are index 1 critical points for $\pit$. The index with respect to $\pi$ is visible in the sign of these hyperbolic points: index 2 critical points of $\pi$ give  positive hyperbolic points of the foliation, while index 1 critical points of $\pi$ give negative hyperbolic points of the foliation. 
Heuristically, this means that the interesting features of $\pi$ may all be seen from the foliation on the boundary.
One may build a signed singular foliation simply by patching together the local models of Figure~\ref{fig:sing}, but in the context of this paper we will only encounter  ordered signed singular foliations;  equivalently, these are signed singular foliations that  can be induced by  an $S^1$-valued Morse function.  We therefore consider the map $\pit$ to be an essential part of the data and we write  $\fol=(\pit, H=H_-\cup H_+, E=E_-\cup E_+)$.  We may also assume that $0$ is a regular value of $\pit$ and we further require that $\pit=\pm\phi$ on an $(r, \phi)$-disk neighborhood of each elliptic point $e\in E$, where the sign depends on the sign of the elliptic point. Here $(r,\phi)$ is the standard polar coordinate system on $D^2$.   By construction, the foliation on a manifold constructed from an abstract foliated open book has only elliptic and four-pronged hyperbolic singularities, with  signs inherited from the  partitions of $E$ and $H$.

 \begin{definition}\label{def:mfb}
 A \emph{manifold with foliated boundary}  is a compact oriented three-manifold equipped with an ordered signed singular foliation $\fol=(\pit, H=H_-\cup H_+, E=E_-\cup E_+)$ on its boundary. We denote this collection of data by  $(M, \partial M, \mathcal{F})$.  \end{definition}
  
   We recall  that this data includes the $S^1$-valued Morse function $\pit$, and we    require that a diffeomorphism $f:(M_1, \partial M_1, \fol_1) \rightarrow (M_2, \partial M_2, \fol_2)$  between manifolds with foliated boundary must satisfy $\pit_2\circ f= \pit_1$.
   
 We have seen that an abstract foliated open book gives rise to a diffeomorphism class of manifolds with foliated boundary; we say a manifold with foliated boundary is \emph{compatible} with $(\{S_i\}, h)$ if it is diffeomorphic to an output of this construction. A fixed manifold with foliated boundary will admit many different functions $\pi$ extending the Morse function $\pit$ on the boundary, but compatibility identifies an equivalence class of such functions whose level sets are diffeomorphic to the pages $\{S_i\}$. 
 
  \begin{definition}\label{def:div} Suppose that $\fol=(\pit, H=H_-\cup H_+, E=E_-\cup E_+)$ is a signed singular foliation with no $S^1$ leaves.  We define $R_+(\fol)$ to be a closed neighborhood of the union of the stable separatrices of hyperbolic points in $H_+$, and let $R_-(\fol)=\overline{M\setminus R_+(\fol)}$. 
  The \emph{dividing curve} $\Gamma(\fol)$ of $\fol$ is  $\partial R_+(\fol)=-\partial R_-(\fol)$.
  \end{definition} 
  
Note that the serifed symbol $\Gamma$ used here is distinct from the sans-serif symbol $\bsGamma$ introduced in Section~\ref{sssec:bs} as part of the defining data of a bordered sutured manifold. The above construction of $\Gamma(\fol)$ follows the construction of Giroux \cite{Gi-convex} for dividing curves of characteristic foliations of convex surfaces and in our case too $\Gamma(\fol)$ indeed divides $\fol$: $\Gamma(\fol)$ is positively transverse to the leaves of $\fol$ and it separates $\partial M\setminus \Gamma(\fol)$  into two parts, one of which contains all the positive singular points, while the other part contains all the negative singular points. \footnote{Readers concerned that we have not addressed a third condition for dividing curves, regarding the divergence of the foliation, are referred to Lemma 2.6 of \cite{LV}.} These  properties in fact specify $\Gamma(\fol)$ up to isotopy transverse to $\fol$, and we will often work with its equivalence class, only specifying the representative when needed. The choice of the representative then changes $R_+(\fol)$ and $R_-(\fol)$ accordingly, while still preserving the condition that $\Gamma(\fol)=\partial R_+(\fol)=-\partial R_-(\fol)$.   Note that distinct foliations on $\partial M$ may induce isotopic dividing sets; see Figure~\ref{fig:folball}.

   \begin{definition}
   Two ordered signed singular foliations $\fol_1$ and $\fol_2$ on a surface $\Sigma$ are \emph{strongly topologically conjugate} if 
	there is an isotopy taking $\fol_1$ to $\fol_2$ which respects the cyclic orders on the hyperbolic points.
   \end{definition}
   
 The term ``topological'' refers to the fact that the above isotopy may not be smooth. In fact, in the next definition, the isotopy cannot be chosen to be smooth.

\begin{definition} \cite[Definition 3.8, 3.10]{LV} A contact structure $\xi_0$ on a manifold with foliated boundary  $(M, \partial M, \mathcal{F})$ is \emph{strictly supported} by  the abstract foliated open book $(\{S_i\}, h)$    if  there is a diffeomorphism $f\colon (M', \partial M', \mathcal{F'})\to (M, \partial M, \mathcal{F})$, where $(M', \partial M', \mathcal{F'})$ is a manifold constructed from $(\{S_i\}, h)$ as above, such that the pullback $f^{\ast}\xi$ is the kernel of some contact one-form $\alpha$ on $M'$ satisfying the following conditions:
 \begin{enumerate}
 \item $\alpha(TB)>0$;
 \item for all $t$, $d\alpha|_{\pi^{-1}(t)}$ is an area form; and
 \item for each  pair of consecutive hyperbolic points at times $t_1$ and $t_2$, there is a time $t_*$ such that $t_1<t_*<t_2$ with the property that the regular leaf $\pit^{-1}(t_*)$ of $\mathcal{F}$  is Legendrian.
 \end{enumerate}
 A contact structure $\xi_1$ is \emph{supported} by $(\{S_i\}, h)$ if there exists a path of contact structures $\xi_t$ such that  $\xi_0$  is strictly supported by $(\{S_i\}, h)$ and for all $t$, the characteristic foliation $\mathcal{F}_{\xi_t}$ is strongly topologically conjugate to the characteristic foliation $\mathcal{F}_{\xi_0}$.  
  \end{definition}

\begin{definition}\label{def:mxif} A \emph{foliated contact three-manifold} $(M, \xi, \fol)$ is a manifold with foliated boundary together with a contact structure $\xi$ on $M$ such that $\fol$ is strongly topologically conjugate to $\mathcal{F}_{\xi}$.  \end{definition}

\begin{theorem}\cite[Theorems 3.12, 7.1, 7.2]{LV} \label{thm:support}
Any foliated open book with circle-free boundary foliation supports a unique isotopy class of contact structures, and any foliated contact three-manifold $(M,\xi,\mathcal{F})$ admits a supporting foliated open book.
\end{theorem}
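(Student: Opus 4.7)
The plan is to establish the two assertions separately, bootstrapping in both cases from the classical Giroux correspondence for open books on closed three-manifolds.

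For the forward direction, I would first construct an explicit contact form on a neighborhood of the binding $B$ by using the standard model $\alpha = h_1(r)\,d\phi + h_2(r)\,dt$ adapted to the cornered structure, and then extend this form successively across each elementary cobordism $M_i$ from $S_{i-1}$ to $S_i$ via a Thurston--Winkelnkemper style construction. In the interior of each $M_i$ the extension is standard: choose an area form on the pages that is exact near the boundary, and combine with $d\pi$ to obtain a contact form whose Reeb vector field is positively transverse to the pages. The nontrivial part is arranging compatibility with the prescribed foliation $\fol$ on $\partial M$. The location and sign of each hyperbolic singularity of $\fol$ is already determined by the partition $H = H_-\cup H_+$ and the corresponding \textbf{add}/\textbf{cut} operation, and likewise the elliptic points are prescribed by $E = E_-\cup E_+$; one must check that for a suitably chosen $\alpha$ the characteristic foliation $\fol_\xi$ is strongly topologically conjugate (in the sense required by the dividing set $\Gamma(\fol)$) to $\fol$. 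This is a local calculation near each singular point of $\pit$, using the standard models for the characteristic foliation near convex elliptic and hyperbolic singularities.

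For uniqueness up to isotopy, suppose $\xi_0$ and $\xi_1$ are two supporting contact structures, defined by forms $\alpha_0$ and $\alpha_1$ satisfying the three conditions of the support definition. The space of such forms is convex (in fact star-shaped), since positivity on $TB$, positivity of $d\alpha|_{\text{pages}}$, and strong topological conjugacy of the induced foliation to $\fol$ are all preserved under convex combinations, at least after a small perturbation near $\partial M$. A Gray-type stability argument then produces the desired ambient isotopy from $\xi_0$ to $\xi_1$ fixing $\fol$ setwise.

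For the reverse direction, given $(M,\xi,\fol)$ I would embed it into a closed contact manifold $(N,\xi_N)$ by attaching a contact collar along $\partial M$, choosing the collar so that its outer boundary is convex with a simple dividing set and the characteristic foliation interpolates smoothly between $\fol$ and a standard model. Apply Giroux's theorem to $(N,\xi_N)$ to obtain an ordinary open book supporting $\xi_N$. After a contact isotopy of the open book supplied by Giroux's flexibility lemmas, arrange that $\partial M$ becomes transverse to the pages except at a finite set of tangencies matching the singular set of $\fol$, with matching signs; cutting the open book along $\partial M$ then produces the foliated open book on $M$. The main obstacle in this step is not the existence of \emph{some} foliated open book, but rather achieving the exact combinatorial type of $\fol$ after cutting, since strong topological conjugacy is sensitive to the separatrix structure, not merely the dividing set. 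To address this, I would use a refinement of the Giroux stabilization procedure: any positive stabilization of the ambient open book along an arc near $\partial M$ modifies the boundary foliation in a controlled way (introducing a canceling pair of hyperbolic/elliptic points or exchanging separatrices), and a finite sequence of such stabilizations suffices to realize any prescribed $\fol$ in the given strong topological conjugacy class.
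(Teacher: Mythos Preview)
The paper does not prove this theorem; it is cited from \cite{LV} (Theorems 3.10, 7.1, 7.2) and stated without proof. So there is no ``paper's own proof'' to compare against, and your sketch should be evaluated on its own merits.

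Your existence argument for a supported contact structure (Thurston--Winkelnkemper adapted to the cornered pages) is the right idea and is essentially what \cite{LV} does. However, your uniqueness argument has a genuine gap. You claim that the three supporting conditions are preserved under convex combination of contact forms, ``at least after a small perturbation near $\partial M$.'' Positivity on $TB$ and of $d\alpha$ on pages are indeed convex conditions, but strong topological conjugacy of the characteristic foliation $\fol_\xi$ to $\fol$ is not: the characteristic foliation on $\partial M$ depends nonlinearly on $\alpha$, and along a linear path $\alpha_s = (1-s)\alpha_0 + s\alpha_1$ the singular set of $\fol_{\xi_s}$ can move, merge, or change type. The perturbation you invoke is doing real work that you have not supplied. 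The actual argument in \cite{LV} handles this by first normalizing the contact forms near $\partial M$ (so that the boundary behavior is fixed throughout the homotopy) and only then interpolating; Gray stability on a manifold with boundary also requires that the contact structures agree to sufficient order along $\partial M$, which you have not arranged.

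For the reverse direction, embedding $(M,\xi,\fol)$ into a closed contact manifold and invoking the classical Giroux correspondence is a plausible strategy, but the step where you ``arrange that $\partial M$ becomes transverse to the pages except at a finite set of tangencies matching the singular set of $\fol$'' is the entire content of the theorem and is not supplied by Giroux's flexibility lemmas. You correctly identify this obstacle, but your proposed fix via stabilizations is incomplete: positive stabilization along an arc disjoint from $\partial M$ does not change the boundary foliation at all, and you have not specified which stabilizations near $\partial M$ realize which modifications of $\fol$, nor why finitely many suffice. The construction in \cite{LV} instead builds the foliated open book intrinsically from a contact cell decomposition of $(M,\xi)$ adapted to $\fol$, avoiding the embedding step entirely.
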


\begin{remark}\label{rmk:t} Because the  signed singular foliation $\fol$ data includes a function $\widetilde{\pi}$, a foliated contact three-manifold has a distinguished $t=0$ leaf, which we always assume to be regular. 
\end{remark}

Distinct foliations may  induce isotopic dividing sets, as illustrated in the following example. 

\begin{example} Let $(B^3, \xi_1, \fol_1)$ and $(B^3, \xi_2, \fol_2)$ be two foliated contact three-balls with $\xi_1$ and $\xi_2$ tight, and with the foliations $\fol_1$ and $\fol_2$ on the boundaries as in Figure~\ref{fig:folball}.  The two three-balls are equivalent, in the sense that there is a contact embedding of one into the other with the property that the image of the embedded boundary is convexly isotopic to the boundary of the target manifold. This notion is an equivalence relation on contact manifolds with boundary, yet the given balls are distinct as foliated contact manifolds.

\begin{figure}[h]
	\labellist
	\pinlabel {{\tiny$h_+$}} [ ] at 41 38
	\pinlabel {{\tiny$h_-$}} [ ] at 41 108
	\pinlabel {$e_+$} [B] at -1 6
	\pinlabel {$e_-$} [B] at -1 80
	\pinlabel {$e_+$} [B] at -1 154
	\pinlabel {$e_-$} [B] at 83 154
	\pinlabel {$e_+$} [B] at 83 80
	\pinlabel {$e_-$} [B] at 83 6
	\pinlabel {$e_-$} [B] at 136 80
	\pinlabel {$e_+$} [B] at 222 80
	\endlabellist
\begin{center}
\includegraphics[scale=1]{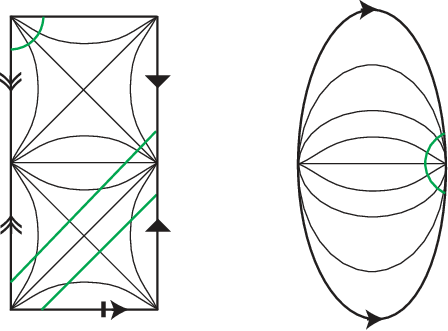}
\caption{Left: A foliation on $S^2$ with four elliptic points and two hyperbolic points. Right: A foliation on $S^2$ with two elliptic points. The dividing set for each foliation is a circle.}\label{fig:folball}
\end{center}
\end{figure}
\end{example}

\subsubsection{Stabilization}

Although each foliated open book supports a unique isotopy class of contact structures, there are many foliated open books which support the same class.  Next, we define an operation which changes a foliated open book while preserving the contactomorphism class of the associated contact manifold.  

\begin{definition}\label{def:stab} Let $\gamma\subset S_0$ be a properly embedded arc with $\partial \gamma\subset B$.  The \emph{stabilization of $(\{S_i\}, h)$ along $\gamma$} is the foliated open book $(\{S'_i\}, h')$ defined as follows:
\begin{itemize}
\item $S_i'=S_i\cup H$, where $H$ is a one-handle attached along $\partial \gamma$; and 
\item $h'=\tau \circ \overline{h}$, where $\tau$ is a positive Dehn twist along the circle formed by $\gamma$ and the core of $H$. (Here $\overline{h}$ denotes the extension of $h$ to $H$ by the identity.) 
\end{itemize}
\end{definition}

In order to define stabilization along some arc in a page besides $S_0$, we first describe an operation on foliated open books which preserves the contactomorphism type of the associated manifold.
\begin{definition}\cite[Definition 3.16]{LV}
The $1$-\emph{shift} of an abstract foliated open book $(\{S_i\}_{i=0}^{2k},h)$ is the foliated open book \[(\{S_i[1]\}_{i=0}^{2k},h[1])=(\{S_1,S_2,\dots,S_{2k},S_1'\},h'),\] where

\begin{itemize}
\item[-] if $S_{0}\xrightarrow[\gamma_{1}]{\text{\textbf{cut}}} S_1$, then $S_1'$ is defined by the relation $S_{2k}\xrightarrow[{h^{-1}(\gamma_{1})}]{\text{\textbf{cut}}} S_1'$ and $h'$ is the restriction of $h$  to $S_1'$;
\item[-] if $S_{0}\xrightarrow[{p_0, q_0}]{\text{\textbf{add}}} S_1$, then $S_1'$ is defined by the relation $S_{2k}\xrightarrow[{ h^{-1}(p_0), h^{-1}(q_0) }]{\text{\textbf{add}}} S_1'$ and $h'$ is $h$ extended by the identity on the added 1-handle. 
\end{itemize}
An $r$-fold iteration of the shift operation is called an \textit{$r$-shift}, and denoted by $(\{S_i[r]\}_{i=0}^{2k},h[r])$. One can analogously define $r$-shifts for $r<0$.
\end{definition}
Shifts correspond to post-composing $\pi$ with a diffeomorphism of $S^1$.

\begin{definition}
Let $\gamma\subset S_r$ be a properly embedded arc with $\partial \gamma\subset B$. The stabilization of $(\{S_i\}, h)$ along $\gamma\subset S_r$ is the foliated open book $(\{S'_i\}, h')$ defined as follows:

\begin{enumerate}
\item first perform an  $r$-shift of $(\{S_i\},h)$ to $(\{S_i[r]\},h[r])$;
\item\label{item:handle}   stabilize as in Definition~\ref{def:stab} along the image of $\gamma\subset S_0[r]$;
 \item perform a $(-r)$-shift  to obtain $(\{S_i'\},h')$, where  $S_i'$ is still obtained from $S_i$ by a handle attachment along $p$ and $q$.
 \end{enumerate}
\end{definition}

It is easy to see that stabilizing does not change the underlying manifold with foliated boundary.

\begin{theorem}\cite[Proposition 6.8, Theorem 6.9]{LV}\label{thm:supportstab}
Let $(\{S_i\}, h)$ be a foliated open book and $(\{S_i'\}, h')$ be a positive stabilization of $(\{S_i\}, h)$. Then the corresponding contact three-manifolds are contactomorphic. Furthermore, if two foliated open books support contactomorphic foliated contact three-manifolds, then they admit a common positive stabilization. \end{theorem}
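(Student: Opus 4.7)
The plan is to prove the two halves of the theorem separately. The first claim (that a positive stabilization produces a contactomorphic foliated contact three-manifold) is essentially a local statement, while the second claim (any two supporting foliated open books admit a common stabilization) is a Giroux-type uniqueness result that will require substantially more work.

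For the first claim, I would begin by reducing to the case of a stabilization along an arc $\gamma \subset S_0$. Shifts correspond to post-composing $\pi$ with a diffeomorphism of $S^1$, which changes neither the underlying three-manifold nor the contact structure $\xi$, and affects the boundary foliation $\fol$ only by an isotopy preserving the dividing set (in particular preserving strong topological conjugacy to $\fol_\xi$). Consequently the $r$-shift, stabilize-in-$S_r[r]$, $(-r)$-shift procedure of Definition~\ref{def:stab} reduces to a stabilization of $(\{S_i\},h)$ along $\gamma \subset S_0$. For such a stabilization, I would construct an explicit contactomorphism using a local model: a neighborhood of the added $1$-handle together with the compensating positive Dehn twist realizes a standard Darboux-type piece. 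Concretely, the added $1$-handle $H$ is attached along two points of $B \subset \partial S_0$, and the Dehn twist along $\gamma \cup \text{core}(H)$ can be supported in a neighborhood of the new handle in $S_{2k}$. I would show that this local modification is contact-isotopic to the identity by realizing it as the attachment of a cancelling pair of a contact $1$-handle and a contact $2$-handle, and verify that the required one-form $\alpha$ (with $\alpha(TB)>0$, $d\alpha$ a page area form, and characteristic foliation strongly topologically conjugate to $\fol$) extends across the modification. The boundary foliation changes only near the two endpoints of $\partial\gamma$ on $B$, and this change is an isotopy relative to $\Gamma(\fol)$.

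For the second claim, I would mimic the strategy of the classical Giroux correspondence, but relative to the foliated boundary. Given two foliated open books $(\{S_i\},h)$ and $(\{S'_j\},h')$ both supporting $(M,\xi,\fol)$, I would embed $(M,\xi,\fol)$ into a closed contact manifold $(N,\xi_N)$ by gluing a ``capping'' foliated contact piece $(M', \xi', \fol)$ with matching boundary foliation (produced, say, from a standard local model near $\partial M$). Both foliated open books extend across $M'$ to ordinary open books for $(N,\xi_N)$, once one fixes a common open book on $M'$. By the classical Giroux correspondence, these two ordinary open books admit a common positive stabilization. I would then argue that one can arrange the sequence of stabilizations connecting them to be supported in pages disjoint from the capping piece $M'$, so that restricting each intermediate stabilization back to $M$ yields a sequence of foliated open book stabilizations. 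The key technical point is that an ordinary stabilization along an arc can be homotoped to lie inside any prescribed open subset of a page intersected with $M$; I would establish this by handle-slides and isotopies of cutting arcs in $M'$, being careful to preserve $\fol$ on the interface.

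The main obstacle is the second half: controlling the stabilization arcs so that they remain inside $M$ throughout the common-stabilization procedure. This is where ``sorted'' representatives and the explicit $\pit$ on the boundary become essential, since one must track the height function across the gluing interface to ensure each stabilization can be realized by an arc in a page of $M$ with endpoints on $B$, rather than on the artificial boundary $A_i$. I would handle this by a Morse-theoretic argument comparing the $S^1$-valued functions $\pi$ and $\pi'$ obtained from the two foliated open books, using that both restrict to $\pit$ on $\partial M$ to produce a homotopy through $S^1$-valued functions whose Cerf-theoretic births, deaths, and crossings translate exactly into the \textbf{(add)}/\textbf{(cut)} moves and stabilizations allowed in Definition~\ref{def:stab}.
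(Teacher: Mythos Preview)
The paper does not prove this theorem; it is quoted from \cite[Proposition 6.8, Theorem 6.9]{LV} as background, so there is no in-paper argument to compare against. I will therefore assess your proposal on its own.

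Your first half is essentially correct, with one small correction: stabilization does not change the boundary foliation at all, not even ``near the two endpoints of $\partial\gamma$ on $B$.'' The endpoints of $\gamma$ lie on $B$, and $B$ is collapsed in the construction of $M$; the foliated boundary $\partial M$ is assembled from the $A_i$ pieces, which are untouched by the handle attachment. The paper even remarks, just before the theorem, that stabilizing does not change the underlying manifold with foliated boundary. Your local-model/cancelling-handle argument for the contact structure is the right idea.

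Your second half has a genuine gap. The embed-in-a-closed-manifold strategy requires you to arrange the classical Giroux stabilizations so that every stabilization arc lies in a page of $M$, not of the cap $M'$. But the classical correspondence gives no such control: a stabilization arc for the glued open book $(S,h)$ may connect a component of $B^L$ to a component of $B^R$, or run essentially through $M'$, and such an arc cannot be isotoped rel endpoints into $M$ by ``handle-slides and isotopies of cutting arcs in $M'$.'' Handle-slides of cutting arcs are not the same operation as moving a \emph{stabilization} arc, and in any case an arc with endpoints on different binding components of $M$ and $M'$ has no representative inside $M$. So the restriction step fails as written.

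Your fallback Cerf-theoretic sketch is much closer to a viable proof and is in the spirit of what \cite{LV} actually does, but as stated it is only a plan: you would need to check that a generic one-parameter family of $S^1$-valued functions on $M$ (rel the fixed $\pit$ on $\partial M$) has exactly the bifurcations you want, and that birth/death pairs correspond to positive stabilizations rather than arbitrary Hopf plumbings. That verification is the real content of the theorem, not a detail to be filled in afterward.
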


\subsubsection{Sorted foliated open books}

Next, we introduce some additional bookkeeping  to record the cutting arcs and cocore arcs on all possible pages of the foliated open book.  This additional information corresponds to a choice of a gradient-like vector field for $\pi$ on the associated smooth manifold, as we will describe in Section~\ref{sec:morse}. 

If $S_{{i-1}}\xrightarrow{\text{\textbf{add}}} S_{i}$, then for all $j\geq {i}$, decorate $S_{j}$ with a cocore of the added handle and label this cocore $\gamma_i^-$. If $S_{{i-1}}\xrightarrow{\text{\textbf{cut}}} S_{i}$ along $\gamma\subset S_{{i-1}}$, then for all $j\leq {i-1}$, decorate $S_j$ with the cutting curve and label it $\gamma_i^+$. 

We call the  $\gamma_i^\pm$ \emph{sorting arcs}. Note that by an abuse of notation we use a single label $\gamma_i^\pm$ to refer to distinct copies of the sorting arcs on multiple $S_j$.  In general, sorting arcs may intersect (and consequently some sorting arcs may appear as a union of disconnected intervals on some pages). We will shortly restrict to the subclass of \emph{sorted} foliated open books \cite[Section 5.3]{LV}, which  are characterized by the requirement that the sorting arcs are disjoint on all pages, together with a prescribed ordering of the intersections between sorting arcs and regular leaves.  In the sorted case, we enhance the notation for describing a foliated open book to reflect these additional choices:

\begin{definition}\label{def:sorted}An abstract foliated open book $(\{S_i\}, h, \{\gamma_i^\pm\})$ is \emph{sorted} if  the following properties hold: 
\begin{enumerate}
  \item\label{item:pm}  on each page $S_i$ and for each component $I$ of $A_i$ with $\partial I=\{e_+, e_-\}$, there exist disjoint subintervals  $I_+$ and $I_-$  such that $I_+$ contains $e_+\cup \bigcup_j \big(\gamma^+_j\cap I\big)$ and  $I_{-}$ contains $e_-\cup \bigcup_j \big(\gamma^-_j\cap I\big)$.
 
\item\label{item:plus} for $i {<} j< l$, on each component of $A_i$ any endpoint of $\gamma_l^+$ is closer to $e_+$ than any endpoint of $\gamma_j^+ $;

\item\label{item:minus} for $j<l\leq i$, on each component of $A_i$ any endpoint of $\gamma_l^-$  lies closer to $e_+$ than any endpoint of $\gamma_j^-$; 

\item on each page, the sorting arcs are connected, properly embedded, and disjoint. 
\end{enumerate}
\end{definition}

See Figure~\ref{fig:sorttorus} in Example~\ref{ex:sorted} for an illustration of the ordering conventions given above.
 Sorted foliated open books are useful  because of their close relation to partial open books, which will be described in Section~\ref{sssec:suff}. Below, we show that a foliated open book need not be sorted.

\begin{example}
The foliated open book from Example~\ref{ex:torus} is not sorted.  To see this, we start on the $S_0$ page and attempt to decorate each successive page with cutting arcs which realize the topological transitions between pages and whose endpoints also satisfy the ordering conditions of Definition~\ref{def:sorted}. 

Cutting along $\gamma_1^+$ on $S_0$ yields $S_1$, while attaching a handle to $S_1$ gives rise to $S_2$, which is decorated with the new cocore $\gamma_2^-$.  However, any cutting arc associated to $h_3$ necessarily intersects $\gamma_2^-$ on $S_2$, as the endpoints of $\gamma_3^+$ must lie closer to the positive ends of $A_2$ than the endpoints of $\gamma_2^-$.  Thus the foliated open book is not sorted. 
\begin{figure}[h]
	\labellist
	\pinlabel {$S_0$} [] at 20 -6
	\pinlabel {$S_1$} [] at 96 -6
	\pinlabel {$S_2$} [] at 162 -6
	\pinlabel {$e_+$} [ ] at 45 24
	\pinlabel {$e_+$} [ ] at 45 81
	\pinlabel {$\gamma_1^+$} [ ] at 25 68
	\pinlabel {$\gamma_2^-$} [ ] at 161 15
	\pinlabel {$\gamma_3^+$} [ ] at 168 68
	\pinlabel {$e_+$} [ ] at 183 24
	\pinlabel {$e_+$} [ ] at 183 81
	\endlabellist
\begin{center}
\includegraphics[scale=1]{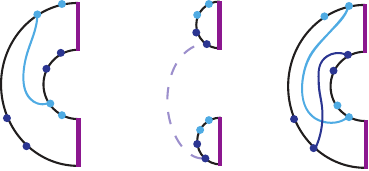}
\end{center}

\caption{The first three pages of the foliated open book from Example~\ref{ex:torus}.}\label{fig:orderofseps}
\end{figure}
\end{example}

We may always stabilize a foliated open book to obtain a sorted one. 
\begin{theorem} \label{thm:revsortedsupport} 
Any foliated contact three-manifold $(M,\xi,\mathcal{F})$ admits a supporting sorted foliated open book. If two sorted foliated open books support the same foliated contact three-manifold, then we can stabilize each through sorted foliated open books to obtain a common sorted foliated open book. Moreover, all of these stabilizations can be assumed to be performed on the $S_0$ page  
as in Definition \ref{def:stab}.
\end{theorem}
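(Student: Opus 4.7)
The theorem has three parts: existence of a supporting sorted foliated open book, existence of a common sorted stabilization for any two sorted foliated open books supporting the same contact three-manifold, and the refinement that all stabilizations may be confined to the $S_0$ page. The first two assertions are direct quotations of \cite[Theorem 3.10, Proposition 8.4]{LV}, so my plan is to sketch their strategy briefly and then concentrate on the footnoted refinement.

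For existence, I would start with any supporting foliated open book guaranteed by Theorem~\ref{thm:support}. Failures of the sortedness conditions of Definition~\ref{def:sorted} are local in nature: either two sorting arcs intersect on some intermediate page, or the endpoint orderings on some component of $A_i$ violate (\ref{item:pm})--(\ref{item:minus}). Each such local defect can be repaired by a positive stabilization along a carefully chosen arc whose new cocore (or resulting cutting arc) separates the offending configuration. I would set up a complexity invariant --- for example, the sum over all pages of the number of misplaced adjacent endpoint pairs plus the total number of geometric intersections among sorting arcs --- and verify that each corrective stabilization strictly decreases it, forcing termination at a sorted foliated open book. Part~2 is then obtained by applying this same sorting procedure to a common non-sorted stabilization supplied by Theorem~\ref{thm:supportstab}, being careful to choose the corrective stabilizations so that they can also be interpreted as sorted stabilizations of each of the two input foliated open books.

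For the refinement, the key tool is the shift operation, which preserves the supported foliated contact three-manifold. By definition, a stabilization along an arc $\gamma \subset S_r$ with $r\neq 0$ factors as an $r$-shift, a stabilization on the new $S_0$, and a $(-r)$-shift. Given any sequence of positive stabilizations at pages $S_{r_1}, \ldots, S_{r_n}$ produced by the sorting procedure, I would interleave and absorb the shifts: perform an $r_1$-shift, stabilize on $S_0$, then an $(r_2-r_1)$-shift, stabilize on $S_0$, and so on, collecting an overall $(-r_n)$-shift at the end. Since shifts do not change the supported contact three-manifold, this interleaved procedure produces a sorted foliated open book contactomorphic to the target in which every stabilization is performed on the current $S_0$ page. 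The main obstacle I anticipate is the bookkeeping for the sorting procedure itself: each corrective stabilization alters every page simultaneously and introduces a new pair of sorting arcs, so verifying strict decrease of the complexity invariant requires careful control over where the new arcs sit relative to the existing ones. In Part~2 one must additionally check that the corrective stabilizations can be chosen to satisfy both inputs' sortedness constraints, and in Part~3 that sortedness is preserved through the shift collection, which follows because Definition~\ref{def:sorted} is invariant under cyclic relabeling of the pages together with the matching relabeling of the sorting arcs.
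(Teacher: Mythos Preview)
The paper does not supply its own proof of this theorem; it is quoted from \cite{LV}, with the footnote asserting that the $S_0$-refinement is implicit in the proof of \cite[Proposition~8.4]{LV}. So there is no proof in the paper to compare against, and your proposal must be judged on its own merits.

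Parts 1 and 2 of your sketch are plausible in outline and track the strategy one would expect from \cite{LV}: start from an arbitrary supporting foliated open book, repair sortedness violations by positive stabilizations guided by a decreasing complexity, and for two sorted open books pass through a common (possibly unsorted) stabilization and then sort. You correctly flag the delicate point in Part 2, namely that the intermediate stages of the connecting sequence must themselves be sorted, but you do not indicate how the corrective stabilizations are chosen to guarantee this.

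Part 3, however, has a genuine gap. Your argument is circular: interleaving an $r$-shift, a stabilization on the current $S_0$, and a $(-r)$-shift is \emph{by definition} a stabilization on $S_r$ (see the paragraph following Definition~\ref{def:stab}). So your ``interleaved procedure'' is exactly the original sequence of stabilizations on pages $S_{r_1},\dots,S_{r_n}$, rewritten in the language of shifts; it does not convert them into stabilizations on the $S_0$ page of the foliated open book for $(M,\xi,\fol)$. More seriously, shifts are not allowable moves here: the foliation data $\fol$ includes the function $\widetilde{\pi}$ and hence a distinguished $t=0$ leaf (Remark~\ref{rmk:t}), and a shift post-composes $\widetilde{\pi}$ with a diffeomorphism of $S^1$, so it changes the foliated contact three-manifold. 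The paper makes exactly this point at the start of Section~\ref{sec:invariance}, where invariance under time shift is explicitly excluded from the list of moves to be checked. Consequently, your shift-based reduction cannot connect two sorted foliated open books supporting the \emph{same} $(M,\xi,\fol)$.

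The intended content of the footnote is different: it is not that arbitrary stabilizations can be formally relocated to $S_0$ via shifts, but that the \emph{specific} stabilizations constructed in the proof of \cite[Proposition~8.4]{LV} may already be taken along arcs in $S_0$. Establishing the refinement therefore requires going into that proof and observing where the stabilizing arcs live, rather than invoking a general shift argument.
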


\begin{proof} The first two statements are taken from \cite [Theorem 3.12, Proposition 8.4, Theorem 8.14]{LV}. The final claim that the necessary stabilisations may be assumed to occur on the $S_0$ page is implicit in \cite{LV}, as explained next; we include the brief argument here, although it relies on the connection between foliated and partial open books described in Section~\ref{sssec:suff}.  Specifically, the proof of \cite[Proposition 8.6]{LV}  explains why stabilizations on the $S_0$ page suffice to turn an arbitrary  sorted foliated open book into a sufficiently stabilized\footnote{This subclass consists of the foliated open books which can most easily be related to partial open books, a relationship which we discuss shortly.} foliated open book.    The proof that two sufficiently stabilized open books admit a common stabilization relies on Giroux Correspondence for the associated partial open books, and hence, leads again to stabilizations on $S_0$. 
\end{proof}

\begin{example}\label{ex:sorted}
Figure~\ref{fig:sorttorus} shows that the foliated open book from Example~\ref{ex:torus} may be stabilized to a sorted foliated open book.  Starting from index $S_0$, the first obstruction to being sorted occurred as an intersection between $\gamma_2^-$ and $\gamma_3^+$ on $S_2$.   This intersection may be removed by stabilizing ``at'' the $S_2$ page; that is,  by shifting twice and then stabilizing at the new $S_0$. Each new page is obtained from the corresponding old page by attaching a one-handle with one attaching interval on each binding component.  The new final maps to the new initial page by a positive Dehn twist; after shifting back, this becomes a positive Dehn twist identifying the two copies of the $S_2'$  page, as shown.  Thus  $\gamma_2^-$ changes by a positive Dehn twist as it ascends to higher-index pages, while  $\gamma_3^+$ changes by a negative Dehn twist as it descends to lower-index pages.  

\begin{figure}[h]
	\labellist
	\pinlabel {$S_0'$} [ ] at 40 12
	\pinlabel {$S_1'$} [ ] at 113 12
	\pinlabel {$S_2'$} [ ] at 232 12
	\pinlabel {$S_2'$} [ ] at 200 127
	\pinlabel {$S_3'$} [ ] at 314 12
	\pinlabel {$S_4'$} [ ] at 405 12
	\pinlabel {$e_+$} [ ] at 40 128
	\pinlabel {$e_-$} [ ] at 40 91
	\pinlabel {$e_+$} [ ] at 40 73
	\pinlabel {$e_-$} [ ] at 40 35
	\pinlabel {$\gamma_1^+$} [ ] at 16 97
	\pinlabel {$\gamma_3^+$} [ ] at 50 113
	\pinlabel {$\gamma_3^+$} [ ] at 134 86
	\pinlabel {$\gamma_2^-$} [ ] at 161 54
	\pinlabel {$\gamma_3^+$} [ ] at 195 71
	\pinlabel {$\gamma_2^-$} [ ] at 259 81
	\pinlabel {$\gamma_3^+$} [ ] at 239 138
	\pinlabel {$\gamma_2^-$} [ ] at 335 70
	\pinlabel {$\gamma_4^-$} [ ] at 377 85
	\pinlabel {$\gamma_2^-$} [ ] at 426 63
	\pinlabel {$\tau$} [ ] at 235 50
	\pinlabel {$\tau^{-1}$} [ ] at 197 97
	\endlabellist
\begin{center}
\includegraphics[scale=1]{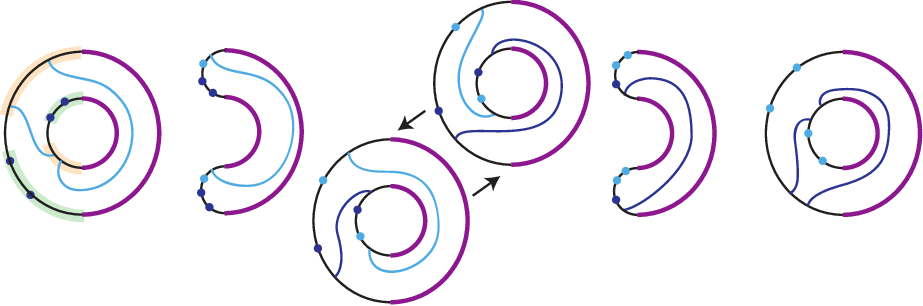} 
\end{center}

\caption{The stabilization arc $\gamma\subset S_0$ is chosen to have one endpoint on each interval of the binding $B$ so that it crosses each of of the intersecting sorting arcs in Figure~\ref{fig:orderofseps} exactly once.  There is a right-handed Dehn twist identifying the two copies of the $S_2'$ page, while the monodromy $h' \co S_4'\to S_0'$ is translation in the plane of the page. The sorting arcs $\gamma^+$ are shown in light blue, and the sorting arcs $\gamma^-$ are shown in dark blue. Subintervals $I_\pm\subset A_0$ are highlighted in green and orange on $S_0$ and may be chosen analogously on the other pages.}\label{fig:sorttorus}
\end{figure}

When depicting a sequence of pages, there is an implicit identification by translation in the page unless otherwise noted.  In this example, the one non-translation occurs at $S_2'$ while the map from $S'_4$ to $S_0'$ remains  translation in the plane of the page, as seen on Figure~\ref{fig:sorttorus}.  The result is a sorted foliated open book for the original contact manifold with foliated boundary.  
\end{example}

\subsubsection{Gradient flows on foliated open books}\label{sec:morse}

As in Section~\ref{sec:supp-cs}, we consider the smooth manifold with foliated boundary $(M, \partial M, \mathcal{F})$ constructed from a foliated open book. The initial abstract data determines an equivalence class of circle-valued Morse functions $\pi \co M\setminus B\rightarrow S^1$ \cite[Section 5.2]{LV}.  The level sets of $\pi$ are the pages, and the restriction $\pit \co \partial M\setminus B \rightarrow S^1$ is again a circle-valued Morse function with the same set of critical points.  In this setting, one may consider the critical submanifolds of a gradient-like vector field for $\pi$ parallel to $\partial M$, which thus restricts to a gradient-like vector field for $\pit$.  The intersection of the critical submanifolds with the regular level sets of $\pi$ determines sorting arcs on the pages of the corresponding abstract foliated open book \cite[Section 5.3]{LV}. Thus a foliated open book is sorted when a gradient-like vector field on $\partial M$ that satisfies the ordering conditions for sorting arcs extends to a gradient-like vector field on $M$ whose critical submanifolds are disjoint. 

Any surface with an open book foliation decomposes along regular leaves into square tiles that each contain precisely one hyperbolic singularity.  Figure~\ref{fig:rgradient} shows the flowlines of  a preferred $\nabla \pit$ on such a tile; these flowlines satisfy the ordering conditions in Definition~\ref{def:sorted}, although ensuring their extension to $M$ yields a sorted foliated open book may require stabilization.  

\begin{figure}[h]
	\labellist
	\pinlabel {$e_+$} [B] at 3 183
	\pinlabel {$e_-$} [B] at 198 183
	\pinlabel {$e_-$} [B] at 3 -5
	\pinlabel {$e_+$} [B] at 198 -5
	\endlabellist
\begin{center}
\includegraphics[scale=.65]{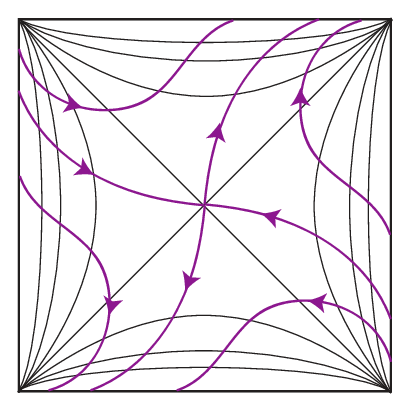}
\end{center}

\caption{The tile of $\fol$ shows  purple flowlines of $\nabla \pit$ which satisfy the sorted ordering conditions.}\label{fig:rgradient}
\end{figure}

The Morse perspective is useful not only for building geometric intuition, but also for relating the boundary decorations to those in the interior of the manifold.  For example, a sorting arc $\gamma^{+}_i$ is isotopic through the associated stable critical submanifold of $\nabla\pi$ to the stable submanifold of $\nabla\pit$ on $\partial M$ which passes through 
the corresponding positive hyperbolic point $h_i$. Likewise, a sorting arc $\gamma^{-}_i$ is isotopic through the associated unstable critical submanifold of $\nabla\pi$ to the unstable submanifold of $\nabla\pit$ on $\partial M$ which passes through 
the corresponding negative hyperbolic point $h_i$.
  Conditions~\ref{item:pm}, \ref{item:plus}, and \ref{item:minus} in Definition~\ref{def:sorted} are chosen so that the graphs formed by  the (un)stable separatrices of positive (negative) hyperbolic points with respect to $\fol$ and with respect to $\nabla \pit$ are isotopic. See  \cite[Lemma 8.8]{LV}. In the next section, this will allow us to define certain subsurfaces as neighborhoods of either graph. 

\subsubsection{Sufficiently stabilized foliated open books and the  associated partial open book} \label{sssec:suff}

Sorted foliated open books are closely related to partial open books, and \cite{LV} introduced the technical designation of a ``sufficiently stabilized" foliated open book. In this section, we recall this definition and show that a sufficiently stabilized foliated open book naturally has a companion partial open book.  Throughout this section, we will let $(\{S_i\}_{i=0}^{2k},h,\{\gamma_i^\pm\})$ be a sorted foliated open book and $(M, \xi, \fol)$ be the supported foliated contact three-manifold.  We further assume that either $k>0$ or $|\partial M|>1$; the only foliated open books this excludes are those formed from an honest open book by removing a neighborhood of a single point on the binding.

Set
\[R_i=N_{\lrcorner}\big(A_i \cup(\bigcup_{i < j}\gamma_j^+)\cup(\bigcup_{i\geq j}\gamma_j^-)\big) \subset S_i,\]
 where $N_{\lrcorner}$ denotes a ``cornered'' neighborhood of $A_i \cup(\bigcup_{i < j}\gamma_j^+)\cup(\bigcup_{i\geq j}\gamma_j^-)$, with corners at $E$, so that $R_i$ meets $B$ only at $E$.  
Furthermore, define \[P_i=\overline{S_i\setminus R_i},\]
as shown on the left in Figure~\ref{fig:sssorttorus}.
Since only the $R_i$  change with $i$, we can identify all the $P_i$ with each other by the flow of the gradient-like vector field. {We denote the composition of these identifications from $P_0\subset S_0$ onto $P_{2k}\subset S_{2k}$ by $\iota$}.  
{Let $\widetilde{S}=S_{0}$, $\widetilde{P}=P_{0}$ and $\tilde{h}=h\vert_{P_{2k}}\circ \iota$}.

\begin{definition}\cite[Proof of Proposition 8.6]{LV} The sorted foliated open book $(\{S_i\}_{i=0}^{2k}, h, \{\gamma_i\})$ is \emph{sufficiently stabilized}  if each component of $P_0$ intersects the boundary of $R_0$ in at least two intervals.
\end{definition}

This condition ensures that $(\widetilde{S}, \widetilde{P}, \tilde{h})$ is a partial open book whose pages embed into the pages of $(\{S_i\}_{i=0}^{2k}, h, \{\gamma_i\})$. As the term suggests, this condition may always be achieved by a sequence of positive stabilizations, and in fact, these may be chosen to occur on the $S_0$ page.

\begin{theorem}\cite[Proposition 8.10]{LV} \label{thm:fobtopob}
With the notation above, the partial open book $(\widetilde{S},\widetilde{P},\widetilde{h})$ is compatible with a contact manifold $(\widetilde{M},\widetilde{\xi})$ which is
contactomorphic to $(M,\xi)$.  Furthermore, under this contactomorphism, the image of the dividing set of the characteristic foliation of $\xi$ on $\partial M$ divides $\widetilde{\mathcal{F}}$. 
\end{theorem}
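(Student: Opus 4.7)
The plan is to exhibit the desired contactomorphism by decomposing both $M$ and $\widetilde{M}$ into matching pieces and appealing to uniqueness of supported contact structures. First, I would verify that $(\widetilde{S}, \widetilde{P}, \widetilde{h})$ is a valid partial open book. The surface $\widetilde{S}=S_0$ is compact, oriented, and connected. The sufficient stabilization hypothesis guarantees that every component of $P_0$ meets $\partial R_0$ in at least two intervals, so $P_0$ decomposes into $1$-handles attached to $R_0 = \overline{\widetilde{S}\setminus\widetilde{P}}$, giving the handle-attachment structure required of a partial open book. The monodromy $\widetilde{h} = h|_{P_{2k}}\circ \iota$ is the identity on $\partial\widetilde{P}\cap\partial\widetilde{S}$ because $h$ fixes $B$ pointwise and $\iota$, being induced by the gradient flow, acts trivially on the binding portion of $\partial P_0$.

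Next, I would build a smooth decomposition $M = H'\cup N'$ that matches the standard decomposition $\widetilde{M} = H\cup N$ of the partial open book. Using a gradient-like vector field for $\pi$ whose boundary restriction is compatible with the sorted structure (as described in Section~\ref{sec:morse}), I would flow $P_0 \subset S_0$ forward through each intermediate page $P_i\subset S_i$. By the definition of the $P_i$, these flowlines avoid every ascending and descending disk of $\nabla\pi$, so they sweep out a product region $N'$; closing up through $h$ realizes precisely the mapping-cylinder structure of $N = P_0\times [0,1]/\!\sim$ with monodromy $\widetilde{h}$. The complementary region $H' = \overline{M\setminus N'}$ is a neighborhood of $B$ together with all ascending and descending disks; by the local normal form for gradient flows of circle-valued Morse functions, $H'$ is diffeomorphic to $S_0\times[-1,0]$ with $B\times[-1,0]$ collapsed, which is precisely $H$. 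The gluing along $\partial H' = \partial N'$ then automatically matches the $H$-$N$ gluing in $\widetilde{M}$.

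To upgrade this to a contactomorphism, I would argue piecewise. On $H'$, the restriction of $\xi$ is tight and compatible with the product structure in the sense that the pages carry a positive area form and the binding is positively transverse; these conditions determine the contact structure up to isotopy relative to the boundary by a standard Giroux-type argument, matching $\widetilde{\xi}|_H$. On $N'$, the contact structure is determined by the monodromy $\widetilde{h}$ together with the characteristic foliation on the pages, again matching $\widetilde{\xi}|_N$. The uniqueness portion of Theorem~\ref{thm:support} then upgrades the piecewise identifications to a global contactomorphism $\Phi\colon(\widetilde{M},\widetilde{\xi})\to(M,\xi)$. For the dividing set statement, the partial open book produces the curve $\overline{\partial\widetilde{S}\setminus\partial\widetilde{P}}\times\{0\}\cup -\overline{\partial\widetilde{P}\setminus\partial\widetilde{S}}\times\{1/2\}$ on $\partial\widetilde{M}$; under $\Phi$ this curve maps to the common boundary on $\partial M$ of a neighborhood of the stable separatrices of positive hyperbolic points of $\pit$, which divides $\fol$ by Definition~\ref{def:div}.

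The main obstacle is the contact-geometric identification step: one must rule out the possibility that $\xi|_{N'}$ and $\widetilde{\xi}|_N$, or $\xi|_{H'}$ and $\widetilde{\xi}|_H$, differ by a non-trivial contact structure supported on a $1$-handle. The sorted and sufficient-stabilization hypotheses are designed precisely so that the combinatorial data match, but promoting this to an isotopy of contact structures relative to the boundary requires a careful application of Giroux flexibility in the presence of prescribed characteristic foliations and the uniqueness clause of Theorem~\ref{thm:support}.
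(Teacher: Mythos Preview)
The paper does not prove this theorem; it is quoted verbatim as \cite[Proposition 8.10]{LV} and no argument is given here. So there is no ``paper's own proof'' to compare your attempt against.

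That said, your outline is broadly the natural strategy and is close in spirit to what the cited source does: decompose $M$ along the flow of a gradient-like vector field into a product piece swept out by the $P_i$ and a complementary handlebody built from the $R_i$ and the critical submanifolds, then match these to the $N$ and $H$ pieces of the partial open book. A few points deserve tightening. First, your appeal to Theorem~\ref{thm:support} at the end is misplaced: that theorem concerns foliated open books, whereas the uniqueness you need on the $H$ and $N$ pieces is the Honda--Kazez--Mati\'c uniqueness of tight contact structures on (relative) handlebodies with prescribed dividing sets, together with Giroux flexibility to match characteristic foliations. Second, the assertion that $H'$ is diffeomorphic to $S_0\times[-1,0]/\!\sim$ is not just a ``local normal form'' statement; it uses the sorted hypothesis in an essential way, since the complement of the $P_i$-flow region contains all the ascending and descending disks, and one must check these are organized so that $H'$ retracts onto $S_0$ rather than something more complicated. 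Your final paragraph correctly identifies the genuine work: ruling out exotic contact structures on the pieces relative to the prescribed boundary data. That is indeed where the argument lives, and your sketch stops short of carrying it out.
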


\begin{example} 
In this example, we return to the foliated open book of Example~\ref{ex:sorted}, which is not sufficiently stabilized.  Recall that $R'_0$ is constructed as a cornered neighborhood of the sorting arcs together with $A'_0$.  Observe that $P'_0:=S_0'\setminus R_0'$ consists of two bigons, each with an edge on $B$ and an edge on the annular $\partial R'_0$.  It follows that $S'_0$ cannot be built up from $R'_0$ by adding one-handles, which is the topological requirement for the surface\slash subsurface pair to define a partial open book.

\begin{figure}[h]
	\labellist
	\pinlabel {$S_0''$} [ ] at 40 12
	\pinlabel {$S_1''$} [ ] at 113 12
	\pinlabel {$S_2''$} [ ] at 242 12
	\pinlabel {$S_2''$} [ ] at 200 127
	\pinlabel {$S_3''$} [ ] at 314 12
	\pinlabel {$S_4''$} [ ] at 405 12
	\pinlabel {$\tau$} [ ] at 237 53
	\pinlabel {$\tau^{-1}$} [ ] at 197 100
	\endlabellist
\begin{center}
\includegraphics[scale=1]{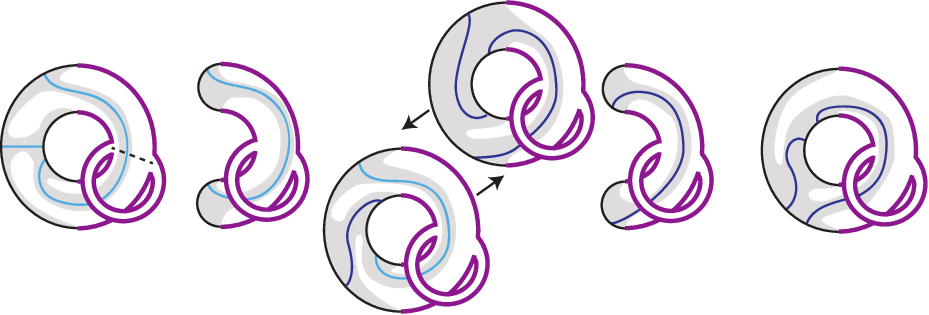}
\end{center}
\caption{ The shaded regions show the cornered neighborhoods $R_i''$ in each page of the sufficiently stabilized open book. As in Example~\ref{ex:sorted}, successive pages are identified by translation except  for the right-handed Dehn twist $\tau$  at $S_2''$.}\label{fig:sssorttorus}
\end{figure}

Figure~\ref{fig:sssorttorus} shows a sufficiently stabilized foliated open book for the same manifold, built by stabilizing along the dotted arc which  connected the two components of the binding.  The new monodromy is the composition of the original monodromy  with a positive Dehn twist along the circle formed by the core of the added handle and the stabilizing arc.

 The new $P''_0:=S_0''\setminus R_0''$ is a single disk whose boundary intersects $\partial R''_0$ along two intervals; this ensures that $(\widetilde{S}'', \widetilde{P}'', \widetilde{h}'')$ is a partial open book.   In order to understand $\widetilde{h}''$, we first examine the flow of $P_0''$ through the manifold. Recall from Example~\ref{ex:sorted} that the only non-trivial identification between successive pages is a right-handed Dehn twist at the $S_2''$ page before the cut yielding $S_3''$.  The twist is along the core of the annular $R_2''$, so $P_4''$ is isotopic, relative to $A_4''=A_0''$, to $P_0''$.   Thus, the partial open book monodromy is simply the restriction of the foliated open book monodromy to $P_4''$.

\end{example}

As shown in \cite[Lemma 8.12]{LV}  the cornered diffeomorphism type of the subsurfaces $R_i\subset S_i$ depends only on the foliation $\mathcal{F}$, rather than on the pages $S_i$. Construct the corresponding contact three-manifold $(M,\xi,\mathcal{F})$ for a sorted abstract foliated open book $(\{S_i\},h,\{\gamma_i^\pm\})$. Recall from Definition~\ref{def:div} that the subsurface $R_+(\fol)$ (respectively $R_-(\fol)$) of $\partial M$ is a closed neighborhood of the (un)stable separatrices corresponding to positive (negative) hyperbolic points of $\mathcal{F}$. 

As noted in Section~\ref{sec:morse}, the graph of positive separatrices of $\nabla \pit$ from positive hyperbolic points on $\partial M$ is (non-smoothly)  isotopic to the graph of positive separatrices of $\fol$ from positive hyperbolic points on $\partial M$.  On the other hand, the former is isotopic through the stable critical submanifolds to a neighborhood of the sorting arcs $\cup_{H_+}\gamma^+_i$ on $S_0$.   This yields an identification between $R_+(\fol)\subset \partial M$ and $R_0\subset S_0$.  Similarly, studying negative separatrices of negative hyperbolic points  and unstable critical submanifolds yields an identification between $R_-(\fol)\subset \partial M$ and $R_{2k}\subset S_{2k}$. 

Similarly, all the $R_i$ can be identified with surfaces described only using the foliation $\fol$, independently of the foliated open book, as follows. Recall that $R_i$ is the (cornered) neighborhood of the union of $A_i$  and the intersection of all stable and unstable submanifolds of $\nabla \pi$ with $S_i$. Each of these intersections can be pushed up (or down) onto $\partial M$ along the corresponding stable (or unstable) submanifolds. While performing these isotopies we push the half-neighborhood of $A_i$ up along $I_+$ and down along $I_-$; the surface obtained after the isotopy is not a submanifold of $\partial M$, as it has a half twist in the middle of each component $I$ of $A_i$ which extends into the interior of $M$. See Figure \ref{fig:rtoq}.
\begin{figure}[h]
\begin{center}
\includegraphics[scale=.7]{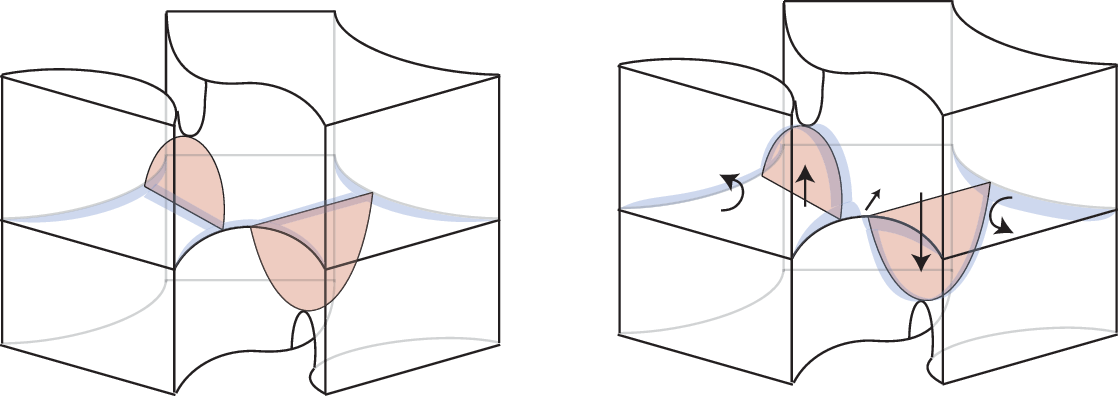}
\end{center}

\caption{The blue region on the left-hand picture is the cornered neighborhood $R_i\subset S_i$.  On the right-hand figure, $R_i$ has been isotoped through $M$ in the direction of the indicated arrows to lie mostly on $\partial M$.  The small arrow indicates the half twist which extends into the interior of $M$. Note that $B$ has been blown up to $B\times I$ for clarity.}\label{fig:rtoq}
\end{figure}

\subsubsection{Gluing foliated open books.}\label{ssec:gluing} In order to glue foliated open books, we describe the matching conditions imposed upon their boundary foliations. 
Recall that a foliation $\fol$ on a surface $F$ always refers to the data of a signed foliation together with the function $\pit\colon F\setminus E\to S^1$ whose level sets are the leaves of $\fol$. 

\begin{definition} The \emph{reverse of a foliation $\fol$} on $F$ is the foliation on $-F$ which is equal to $\fol$ pointwise, but with the leaf orientations and signs of singular points reversed.  \end{definition}

The main advantage of foliated open books is that they are well suited for gluing. Suppose that  $(M^L,\xi^L,\mathcal{F}^L)$ and $(M^R,\xi^R,\fol^R)$ are foliated contact three-manifolds such that there exists  an orientation-reversing diffeomorphism $\psi\colon \partial M^L  \to \partial M^R$ that maps the reverse of the foliation $\fol^L$ onto $\fol^R$. 

Recall that by hypothesis, the boundary foliations  are  divided, so the boundaries $\partial M^{\bullet}$ are convex with respect to $\xi^{\bullet}$. (Here, and throughout this section, $\bullet$ will be an element of the set $\{R, L\}$.) Since the contact structures are $I$-invariant near the boundaries, $\psi$ determines a closed contact three-manifold \[(M,\xi)=(M^L\cup_\psi M^R,\xi^L\cup_\psi\xi^R).\]  

If the initial contact manifolds were supported by foliated open books $(\{S^L_i\},h^L)$ and  $(\{S^R_i\},h^R)$, respectively, then $(M,\xi)$ naturally inherits a supporting open book whose pages and binding are built by gluing the pages and bindings of the constituent pieces.  More precisely, recall that  $\partial S^{\bullet}_i=A_i^{\bullet}\cup B^{\bullet}$, where all the $B^{\bullet}$ are identified   when forming the manifolds $M^{\bullet}$.  The map $\psi$ identifies the intervals $A_i^L$ and $-A_i^R$ for all $i$, forming the surfaces \[S_i=S_i^L\bigcup_{A_i^L\xrightarrow{\psi}-A_i^R} S_i^R\] with boundary $B=B^L\cup B^R$.  Observe that a cut to $S_i^R$ pairs with a handle addition to $S_i^L$, and vice versa, so the surfaces  $S_i$  are  diffeomorphic  for all $i$.  This allows us to identify  $S_0\cong S_1 \cong\cdots \cong S_{2k}$ and we denote the composition of these identifications by  $\iota \colon S_0\to S_{2k}$.  Note that $\iota$  fixes $B$, {and it restricts to $P_{0}^\bullet$ as the identification $\iota^\bullet:P_0^\bullet \rightarrow P_{2k}^\bullet$  defined in Section \ref{sssec:suff}}. Letting $S:=S_0$,  the monodromy $h\colon S\to S$ for the glued-up open book can be obtained as the composition of $\iota $  with $h^L\cup h^R\colon S_{2k}=S_{2k}^L\cup S_{2k}^R\to S_{0}^L\cup S_{0}^R=S_0$. By construction, $h$ fixes $B$, as required.

\begin{theorem}\cite[Theorem 6.2]{LV} \label{prop:glue} 
Suppose that the foliated open books $(\{S_i^L\},h^L)$ and $(\{S_i^R\},h^R)$ define the three-manifolds with foliated boundary
 $(M^L,\fol^L)$ and $(M^R,\fol^R)$, and assume that there is an orientation-reversing diffeomorphism $\varphi\colon \partial M^L\to \partial M^R$ that takes the reverse of the foliation $\fol^L$ onto $\fol^R$. 

Then there are contact structures $\xi^L$ and $\xi^R$ supported by  $(\{S_i^L\},h^L)$ and $(\{S_i^R\},h^R)$, respectively, so that $\xi=\xi^L\cup_\varphi\xi^R$ is a contact structure $\xi$ on the manifold $M=M^L\cup_\varphi M^R$  that is supported by the honest open book $(S,h)$ constructed by pagewise gluing, as defined above.
\end{theorem}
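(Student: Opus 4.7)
My plan is to first choose representatives of the supporting contact structures on each piece so that they agree along the glued boundary, then verify that the pagewise-glued open book $(S,h)$ supports the resulting contact structure via the Giroux compatibility conditions in Definition 3.7 of \cite{LV}.

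First I would apply \thmref{thm:support} to each foliated open book to produce contact structures $\xi^L$ and $\xi^R$ supported by $(\{S_i^L\},h^L)$ and $(\{S_i^R\},h^R)$, respectively. By definition, the characteristic foliation of $\xi^\bullet$ on $\partial M^\bullet$ is strongly topologically conjugate to $\fol^\bullet$, so both boundaries are convex with specified dividing sets. Because $\varphi$ carries the reverse of $\fol^L$ onto $\fol^R$, it in particular identifies the dividing sets. Applying Giroux's flexibility theorem in a collar of each boundary, I would isotope $\xi^L$ and $\xi^R$ within their isotopy classes so that the two characteristic foliations are pointwise identified by $\varphi$, and so that each $\xi^\bullet$ is $I$--invariant in a collar of the boundary. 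The resulting contact structures then glue to a smooth contact structure $\xi = \xi^L \cup_\varphi \xi^R$ on $M = M^L \cup_\varphi M^R$.

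Next, I would construct a contact one--form for $\xi$ that witnesses compatibility with $(S,h)$. On each side, fix a contact one--form $\alpha^\bullet$ with $\ker\alpha^\bullet = \xi^\bullet$ satisfying $\alpha^\bullet(TB^\bullet)>0$ and $d\alpha^\bullet\vert_{(\pi^\bullet)^{-1}(t)} > 0$. Since the foliations $\fol^L$ and $\fol^R$ include the data of the $S^1$--valued Morse functions $\pit^\bullet$, and since $\varphi$ identifies them, the open book projections $\pi^\bullet$ glue to a well--defined fibration $\pi\colon M\setminus B \to S^1$ whose binding is $B = B^L\cup B^R$ and whose fiber over $t$ is $S_t = S_t^L \cup_\varphi S_t^R$. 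In the $I$--invariant collar produced above, I would rescale $\alpha^L$ and $\alpha^R$ to agree under $\varphi$, so that $\alpha = \alpha^L \cup_\varphi \alpha^R$ is smooth. One then checks the Giroux conditions for $(S,h)$ and $\alpha$: condition (1) is immediate from the corresponding inequalities on each side; condition (2) follows since $d\alpha$ restricts to a positive area form on each $S_t^\bullet$ and the two halves are joined along a codimension--one submanifold across which the form is smooth by construction; condition (3) is automatic because the characteristic foliations of $\xi^\bullet$ realize $\fol^\bullet$.

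The main obstacle is the smooth matching of the two contact forms in the collar. One must verify that after normalization the $\alpha^\bullet$ continue to satisfy the supporting conditions uniformly up to $\partial M^\bullet$, and that the area forms $d\alpha^\bullet\vert_{(\pi^\bullet)^{-1}(t)}$ extend across the glued arcs $A_t^L \cong -A_t^R$ to yield a smooth area form on the closed page $S_t$. This is precisely where the hypothesis that $\varphi$ identifies the full foliations (rather than merely the dividing sets) is essential: matching $\pit^L$ with $\pit^R$ under $\varphi$ ensures that the page structures on the two sides agree pointwise on the interface, so the glued open book fibration on $M$ is smooth and $(S,h)$ supports $\xi$.
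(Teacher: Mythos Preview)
The paper does not prove this theorem; it is quoted from \cite{LV} (Theorem~6.2 there) as background in Section~\ref{ssec:gluing}, and no proof appears in the present paper. So there is no proof here to compare your argument against.

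That said, your outline follows the natural strategy: produce supported contact structures on the pieces via \thmref{thm:support}, match them along the common boundary using convexity and flexibility, glue, and then verify the Giroux compatibility conditions for the honest open book $(S,h)$. Two remarks on gaps in the sketch. First, condition~(3) in \cite[Definition~3.7]{LV} pertains to foliated open books with boundary; for the closed glued manifold $(M,\xi)$ with an honest open book only conditions~(1) and~(2) are at issue, so your appeal to~(3) is not needed (and not meaningful) there. Second, and more substantively, the step ``rescale $\alpha^L$ and $\alpha^R$ to agree under $\varphi$'' is exactly where the content lies, and it is not resolved by what you have written. Multiplying a contact form by a positive function does not in general make two contact forms coincide, even when their kernels agree; you must first arrange that the plane fields match pointwise along the interface and then produce a single smooth $\alpha$ whose restriction to each page $S_t = S_t^L\cup_\varphi S_t^R$ has $d\alpha>0$ across the seam $A_t^L\cong -A_t^R$. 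This requires explicit local models near the glued arcs, and in particular near the elliptic and hyperbolic points of the boundary foliation, rather than a generic convexity/$I$--invariance argument. You correctly flag this as ``the main obstacle,'' but the resolution you offer is a restatement of the hypothesis rather than a construction; the actual construction is the substance of the result in \cite{LV}.
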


Since $\iota \colon S_0\to S_{2k}$ is given as a sequence of identifications, the monodromy $h$ may be  difficult  to reconstruct in a complicated case, and the above construction does not automatically give a factorization in terms of  Dehn twists. For sorted open books, however, there is a straightforward way to describe $\iota$ (and thus $h$) as follows. 
Suppose now, that $(\{S^L_i\},h^L, \{\gamma_i^{\pm,L}\})$ and  $(\{S^R_i\},h^R,\{\gamma_i^{\mp,R}\})$ are sorted foliated open books compatible with 
 $(M^L,\xi^L,\mathcal{F}^L)$ and $(M^R,\xi^R,\fol^R)$, respectively. 
Recall that each page $S_i^{\bullet}$  decomposes as the union of a ``constant'' part $P_i^{\bullet}$ and an $i$-dependent part $R_i^{\bullet}$ which is determined by the foliation $\fol^{\bullet}$. By construction, $P_0^{\bullet}\cong P_1^{\bullet}\cong \cdots \cong P_{2k}^{\bullet}$. These diffeomorphisms are explicit in the construction of the pages $S_i^{\bullet}$ and together they determine $\iota\vert_{P_0^L\cup P_0^R}=\iota^L\cup \iota^R\colon P_0^L\cup P_0^R\to P_{2k}^L\cup P_{2k}^R$.

As  described earlier,  sorted foliated open books have the property that $R_0^L$ is isotopic in $M^L$ to $R_+(\fol^L)$.  This surface is mapped by $\psi$ onto $R_-(\fol^R)$, which in turn is isotopic in $M^R$ to $R_{2k}^R$. The composition of these three maps gives $\iota\vert_{R_0^L}\colon R_0^L\to R_{2k}^R$, and we can similarly obtain $\iota\vert_{R_0^R}\colon R_0^R\to R_{2k}^L$. This yields a concrete description of  the entire map $\iota$ on the remaining parts $R_{2k}^L\cup R_{2k}^L$. Together,  the above maps determine $h$ everywhere.


\section{Construction of the contact invariant}
\label{sec:construction}

\subsection{Bordered manifold associated to a triple}\label{ssec:suturedbordered}

Let $(M,\xi,\mathcal{F})$ be a foliated contact three-manifold with signed singular foliation $\mathcal{F}=(\pit, H=H_-\cup H_+,E = E_-\cup E_+)$. Set $\bsGamma:=\pit^{-1}(0)$, so that $\bsGamma$ is a disjoint union of oriented leaves connecting positive to negative elliptic points. For each leaf $I$,  let $e_+(I)$ and $e_-(I)$ be the corresponding positive and negative elliptic points, respectively. Choose $\epsilon>0$ small enough so that $\widetilde{\pi}^{-1}[-\epsilon, \epsilon]$ contains no critical points of $\pit$. 
Note that then $\pit^{-1}[0,\epsilon]$ is a union of disks, one disk containing each leaf $I\subset\bsGamma$ and denoted by $R_+(I)$.  Similarly, denote the connected component of $\pit^{-1}[-\epsilon,0]$ containing $I$ by $R_-(I)$. Write  $R_+(\bsGamma)= \bigcup_{I\subset \bsGamma} R_{+}(I)$ and  $R_-(\bsGamma)= \bigcup_{I\subset \bsGamma} R_{-}(I)$.  Set $F=\bdy M\setminus (R_+(\bsGamma)\cup R_-(\bsGamma))$.

Next, we use the foliation to define a natural parametrization of $F$ via an arc diagram $\zz=(Z,a,m)$. 
 For each hyperbolic point $h_i$, let $\delta_i$ be the union of the two stable separatrices at $h_i$ if $h_i$ is positive, or the union of the two unstable separatrices at $h_i$ if $h_i$ is negative.
Let $Z(I)$ be a pushoff of $\pit^{-1}(-\epsilon)\cap R_-(I)\subset \bdy F$ 
into $F$ satisfying the following: 
\begin{itemize}
\item $\bdy Z(I)=\left(-e_-(I)\right)\cup e_+(I)$
\item if $\delta_i$ has an endpoint at $e_\pm(I)$, then $Z(I)$ intersects $\delta_i$ in a unique point; otherwise,  $\delta_i$ and $ Z(I)$ are disjoint.
\end{itemize}
Let $Z=\sqcup_{I\subset \bsGamma} Z(I)$.  Define $a$ to be the set of all intersection points of $Z$ with the union of the 
$\delta_i$ and let $m$ be the pairing induced on the points in $a$ by $\delta_i$. See Figure~\ref{fig:torus-bs-NEW}. 
Observe that $Z$ divides $F$ into two subsurfaces: a surface containing all hyperbolic points (shown in white on Figure~\ref{fig:torus-bs-NEW}), and a union of disks (shown in dark red on Figure~\ref{fig:torus-bs-NEW}). 
Let $e_i$ be the curve that is the intersection of $\delta_i$ with the white surface.
Note that after removing the disks bounded by the circles $I\cup Z(I)$ from $F$, and decomposing the resulting surface along the union of $e_i$, we get a disjoint union of disks so that each one of them contains exactly one of the leaves in $\pi^{-1}(\epsilon)$ on its boundary. Thus, $\zz=(Z,a,m)$ parametrizes $F$ as a \emph{$\beta$-type arc diagram} as defined in \cite[Definition 3.2]{bs-JG}; i.e., $F$ can be  identified with $F(\mathcal{Z})$.

\begin{figure}[h]
\begin{center}
\labellist
  	\pinlabel $\textcolor{blue}{\delta_1}$ at 65 55
  	\pinlabel $\textcolor{blue}{\delta_1}$ at 158 100
	\pinlabel $\textcolor{blue}{\delta_2}$ at 158 30
	\pinlabel $\textcolor{blue}{\delta_3}$ at 227 30
	\pinlabel $\textcolor{blue}{\delta_4}$ at 227 100
	\pinlabel $\textcolor{OliveGreen}{e_-}$ at 116 77
	\pinlabel $\textcolor{OliveGreen}{e_+}$ at 116 145
	\pinlabel $\textcolor{OliveGreen}{e_-}$ at 32 90
	\pinlabel $\textcolor{OliveGreen}{e_+}$ at 32 140
	\pinlabel $\textcolor{OliveGreen}{\bsGamma}$ at 116 109
	\pinlabel $Z$ at 211 109
\endlabellist
\includegraphics[scale=1.4]{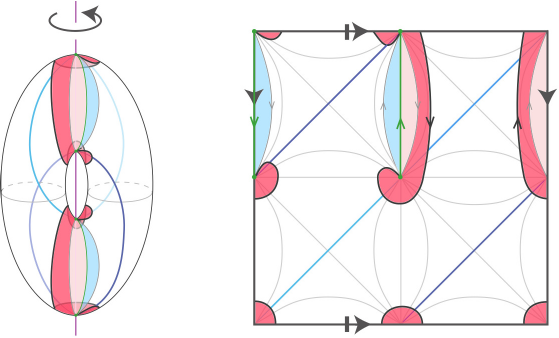}
\end{center}

\caption{In this picture, $R_+(\bsGamma)$ and $R_-(\bsGamma)$ are colored in light blue and light red, respectively. Thus, $F$ is the union of the dark red and white regions. The green curves are $\bsGamma$, the blue curves are the $\delta_i$, so that their intersections with the white surface are the $e_i$. Furthermore, dark red may be identified with $Z\times [0,1/2]$ under the identification of $F$ with $F(\zz)$, and the $e_i$ may be identified with the parametrizing arcs for $\zz$.}
\label{fig:torus-bs-NEW}
\end{figure}

\begin{definition}\label{def:fc-bs}
The triple $(M,\bsGamma,\zz)$ described above is the \emph{bordered sutured manifold associated to the foliated contact three-manifold $(M,\xi,\mathcal{F})$}.
\end{definition}

\subsection{Adapted bordered Heegaard diagram}\label{ssec:bordereddiagram}

Let $(\{S_i\}_{i=0}^{2k},h, \{\gamma_i^\pm\})$ be an abstract sorted foliated open book compatible with the foliated contact three-manifold $(M,\xi,\mathcal{F})$.
In what follows, we will describe a generator in a bordered sutured Heegaard diagram $\HH = (\Sigma, \betas,\alphas, \zz)$ constructed from the data of  $(\{S_i\},h, \{\gamma_i^\pm\})$. Let $g_i$ be the genus of $S_i$ and let $n_i$ be the number of boundary components of $S_i$. Recall that  the boundary of the cornered surface $S_i$ is $B\cup A_i$, where $B$ is a union of circles and arcs, and $A_i$ is a union of intervals only. 
 
We let $\Sigma=S_{0}\cup_{B}-S_{0}$.  In order to distinguish the two copies, we will write 
\[\Sigma=S_{\epsilon}\cup_{B}-S_{0},\] 
but we emphasize that $S_\epsilon$ can be identified with $S_0$. The surface $\Sigma$ has genus $2g_0+n_0-1$ and $|A_0|$ boundary components.  For $i\in H_+$, let $\gamma_i^+$ be the $S_{\epsilon}$ copy of the associated sorting arc.  The endpoints of $\gamma_i^+$ lie  near the $E_+$ end of intervals of $A_\epsilon$. Isotope the arcs $\{\gamma_i^+\}$ (simultaneously, to preserve disjointness) near the endpoints by pushing them along $\bdy\Sigma$ in the direction opposite the orientation of the boundary until the endpoints all lie in $I_+\subset A_0$;  the isotopy stops after crossing $E_+$ and before encountering $\cup_{j\in H_-}h(\gamma_j^-)\subset S_0$. Call the resulting arcs $\beta_i^a$. 

For $i\in H_-$, consider the $S_{2k}$ copies of the sorting arcs $\gamma_i^-$, and let $\beta_i^a = h(\gamma_i^-)$ on $S_0$. 
Write $\betas^a = \{\beta_1^a, \ldots, \beta_{2k}^a\}$.   For convenience, instead of $\beta_i^a$ we will sometimes write $\beta_i^{\pm}$ if $i\in H_{\pm}$.

For $i\in H_{+}$, let $b_i^+ = \beta_i^a\cap S_{\epsilon}$. That is, $b_i^+$ is the arc $\beta_i^a$ with the two end segments that lie on $-S_0$ removed, so that $\partial b_i^+$ lies on $B$.
We claim that after cutting $S_{\epsilon}$ along the arcs $b_i^+$, each connected component contains at least one interval of $A_{\epsilon}$.   

\begin{lemma}\label{lem:cut-gamma}
Each component of  $S_{\epsilon}\setminus \cup_{i\in H_+}b_i^+$ contains an interval component of $A_{\epsilon}$, and hence at least one point in $E_+$, and, equivalently, at least one point in $E_-$. 
\end{lemma}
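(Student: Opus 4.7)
I would split the claim into two sub-statements: (i) every interval component $I\subset A_\epsilon$ lies entirely in the closure of a single connected component of $S_\epsilon\setminus\cup_{i\in H_+}b_i^+$; and (ii) every such component has at least one point of $E_+$ in its closure. Together, (i) and (ii) imply the lemma, since each $e\in E_+$ is the endpoint $e_+(I)$ of a unique interval $I\subset A_\epsilon$, and by (i) this $I$ must then lie in the closure of the same component as $e$.

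Sub-claim (i) is immediate from the construction: each $b_i^+$ is properly embedded in $S_\epsilon$ with both endpoints on $B$, and its interior is disjoint from $\partial S_\epsilon$. Hence the $b_i^+$ are disjoint from $A_\epsilon$, so cutting along them cannot subdivide any interval of $A_\epsilon$; each such interval sits in a single component.

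For sub-claim (ii), I would invoke the sorting conditions of Definition~\ref{def:sorted}. Near each corner $e\in E_+$ the endpoints of the various $b_i^+$ on $B$ lie in a small arc and are linearly ordered: by Condition (\ref{item:plus}), higher-indexed $\gamma_i^+$ have endpoints on $A_0$ closer to $e_+$, and this ordering is preserved when the endpoints are pushed across the corner onto $B$ to form the $b_i^+$. Consequently, a small half-disk neighborhood of $e$ in $S_\epsilon$ is divided into sectors by the incident $b_i^+$; the sector adjacent to the $A_\epsilon$-interval incident to $e$ clearly contains $e$ and (by (i)) extends along all of $I$. Thus each $E_+$ corner lies in the closure of some component, and the remaining task is to show the converse: that no component of $S_\epsilon\setminus\cup_{i\in H_+}b_i^+$ has closure disjoint from $E_+$.

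The main obstacle is precisely this converse, and I plan to handle it using the relationship between the cut surface and the subsurface decomposition $S_0=R_0\cup P_0$ described in Section~\ref{sssec:suff}, where $R_0$ is a regular neighborhood of the graph $A_0\cup\bigcup_{j\in H_+}\gamma_j^+\subset S_0$. Since $b_i^+$ is isotopic in $S_\epsilon$ to $\gamma_i^+$ (the only change is whether the endpoints sit on $A_\epsilon$ or on $B$ just past the corner), cutting $S_\epsilon$ along $\cup_ib_i^+$ is, up to ambient isotopy, the same as slicing $R_0$ along its interior edges $\gamma_i^+$. This reduces $R_0$ to a collection of ``vertex pieces,'' one for each corner in $E$, where each $E_+$-piece contains the adjacent $A_\epsilon$-interval and hence also the adjacent $E_-$-piece. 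The components of $P_0$ attach to these vertex pieces along arcs of $\partial R_0$, and the sorted ordering of sorting arcs ensures that each piece of $P_0$ abuts at least one $R_0$-piece containing an $E_+$ corner. Chasing these attachments, every component of the cut surface meets some $E_+$-vertex piece, proving (ii) and completing the lemma.
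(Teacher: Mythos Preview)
Your sub-claim (i) is fine, and the reduction to showing that every component meets $E_+$ is a reasonable reformulation. The gap is in your treatment of (ii). Two specific problems:

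First, the sentence ``cutting $S_\epsilon$ along $\cup_i b_i^+$ is, up to ambient isotopy, the same as slicing $R_0$ along its interior edges $\gamma_i^+$'' is not right: you are cutting all of $S_\epsilon$, not just the subsurface $R_0$. What you presumably mean is that the components of $S_\epsilon\setminus\cup b_i^+$ correspond to components of $S_0\setminus\cup\gamma_i^+$, and that each of the latter is assembled from pieces of $R_0\setminus\cup\gamma_i^+$ together with components of $P_0$. Even this correspondence needs care, since the isotopy moves every endpoint across the corner $e_+$, so the position of $e_+$ relative to the cut changes.

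Second, and more seriously, the assertion ``the sorted ordering of sorting arcs ensures that each piece of $P_0$ abuts at least one $R_0$-piece containing an $E_+$ corner'' is exactly the nontrivial content of the lemma, and you have not justified it. A component of $P_0$ meets $\partial R_0$ along some arcs, and there is no immediate reason those arcs must border an $E_+$-adjacent piece rather than, say, a ``middle'' sub-interval of $A_0$ or an $E_-$-adjacent piece. The sorted condition controls the \emph{order} of the $\gamma_i^+$ endpoints along each interval of $A_0$; you have not explained how that ordering forces the adjacency pattern you claim. ``Chasing these attachments'' is not an argument.

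The paper's proof avoids all of this by working directly with the $b_i^+$ and arguing by contradiction. The key observation is that the sorted ordering on $A_\epsilon$ translates, after the isotopy across $E_+$, into the statement that along each arc of $B$ the endpoints of the $b_i^+$ appear in \emph{decreasing} index order. If a component $C$ of $S_\epsilon\setminus\cup b_i^+$ were disjoint from $A_\epsilon$, then some circle of $\partial C$ would alternate between sub-arcs of $B$ and full arcs $b_i^+$; traversing it (in the orientation inherited from $B$) each $B$-segment would run from a $b_i^+$ to a $b_j^+$ with $j<i$, producing a cyclic strictly decreasing sequence of integers, which is impossible. This is where the sorted ordering actually does the work, and it is the step missing from your sketch.
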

\begin{proof}
Recall that moving along any given interval of $A_{\epsilon}$, we encounter curves $\gamma_i^+$ indexed in decreasing order. Thus, after the isotopy, moving along any given interval of $B$, we encounter curves $b_i^+$ indexed in decreasing order as well.  Now, suppose there is a component  $C$ of $S_{\epsilon}\setminus \cup_{i\in H_+}b_i^+$ that does not intersect $A_{\epsilon}$. Then the boundary of  $C$ consists of possibly some complete binding circles and at least one circle that alternates between intervals of $B$ and entire arcs $b_i^+$. Since $C$ is a subsurface of $S_{\epsilon}$, the orientation on $B$ agrees with the orientation on $\bdy C$. When traversing this circle of $\bdy C$, it follows  that each interval of $B$ starts at an intersection point with some $b_i^+$ and ends at an intersection point with some $b_j^+$ where $j<i$, thus inducing a circular $<$ ordering on integral indices, which is a contradiction.
\end{proof}

 Let ${\boldsymbol b} = \{b_1, \ldots, b_{2g_0+n_0+|A_0|-k-2}\}$ be a set of cutting arcs for $P_{\epsilon}\subset S_{\epsilon}$ disjoint from $\betas^a$ and with endpoints on $B$, so that each connected component of $S_{\epsilon}\setminus ({\boldsymbol b}\cup \betas^a) = S_{\epsilon}\setminus ({\boldsymbol b}\cup \{b_i^+\}_{i\in H_+})$ is a disk with exactly one interval of $A_{\epsilon}$ on its boundary.
 Note that Lemma~\ref{lem:cut-gamma} guarantees this can be achieved.  In other words, ${\boldsymbol b}$ is a basis for $H_1(P_\epsilon, B)$. Recalling the identification  $S_\epsilon=S_0$, we may push $b_i\subset S_0$ through $M$ to lie on $S_0$ again and define
   \[\beta_i=b_i\cup -h\circ \iota(b_i)\subset S_{\epsilon}\cup_{B}-S_{0},\]
   where $\iota$ is the identification of $P_0$ with $P_{2k}$ from  Section \ref{sssec:suff}. 
Write $\betas^c=\{\beta_1, \dots, \beta_{2g_0+n_0+|A_0|-k-2}\}$.
  
  Note that by modifying the isotopy that transforms $\{\gamma_i^+\}$ into $\{\beta_i^+\}$, possibly forcing it to happen in a smaller neighborhood of $E_+$, we may assume that all curves in $\betas^a \cup \betas^c$ are pairwise disjoint.

 For each cutting arc $b_i\in \boldsymbol b$ on $S_\epsilon$, let $a_i$ be an isotopic curve formed by pushing the endpoints negatively along the boundary so that $a_i$ and $b_i$ intersect once transversely. Similarly, for each arc $b_j^+$, let $\tilde a_j$ be an isotopic curve formed by pushing the endpoints negatively along the boundary so that $\tilde a_j$ and $b_j^+$ intersect once transversely.  We ``double" each of these arcs to form the $\alpha$-circles which define the handlebody $S_0\times [0,\epsilon]$. Namely, define  
  \begin{align*}
 \alpha_i &= a_i\cup -a_i \subset S_{\epsilon}\cup_{B}-S_{0}\\
 \widetilde{\alpha}_j &= \tilde a_j\cup -\tilde a_j \subset S_{\epsilon}\cup_{B}-S_{0},
 \end{align*}
\noindent and write $\alphas = \{\widetilde\alpha_i\}_{i\in H_+}\cup \{\alpha_1, \dots, \alpha_{2g_0+n_0+|A_0|-k-2}\}$. Finally, let
\[Z=\left(\bigcup_{I\subset A_{\epsilon}}(I_+\cup I_-)\right)\cup -A_0\subset \bdy(S_{\epsilon}\cup_{B}-S_{0}) = \bdy\Sigma.\]
We obtain $\zz = (Z, \bdy\betas, m)$, where $m$ matches a pair of points if they belong to the same $\beta$-arc, and we get an identification of $G(\zz)$ with $Z\cup \betas^a\subset \Sigma$.

We say that a bordered sutured Heegaard diagram constructed as above is \emph{adapted} to the sorted abstract foliated open book $(\{S_i\},h, \{\gamma_i^\pm\})$. 

Let $\HH = (\Sigma, {\alphas}, {\betas},\zz)$ be a bordered sutured Heegaard diagram adapted to $(\{S_i\},h, \{\gamma_i^\pm\})$.
 Using the  notation introduced above, 
define the set 
\[\xxx = \{x_1, \ldots, x_{2g_0+n_0+|A_0|-k-2}\} \cup \{x_i^+ \mid i\in H_+\}\]
as the unique intersection points 
\begin{align*}
x_i &=  a_i\cap b_i \in S_{\epsilon}\subset \Sigma\\
x_i^+ &= \tilde a_i\cap b_i^+\in S_{\epsilon} \quad \textrm{ if } i\in H_+.
\end{align*}

\begin{example}\label{ex:HeegaardDiagram}
We show the step-by-step construction of the bordered sutured Heegaard diagram adapted to the sorted foliated open book from Example~\ref{ex:sorted}. {To simplify notation, we write $S_i$ and $h$ instead of the $S_i'$ and $h'$ labels used in Example~\ref{ex:sorted}.}

	In Figure \ref{fig:HD1_betaplus}, we see the sorting arcs $\gamma_i^+$ and their images $\beta_i^+$ after the isotopy taking their endpoints to $I_+\subset A_0$. The arcs $b_i^+$ are the intersections $\beta_i^+ \cap S_\epsilon$.
	
\begin{figure}[h]
	\labellist
	\pinlabel {$e_+$} [b] at 75 142
	\pinlabel {$e_-$} [t] at 75 101
	\pinlabel {$e_+$} [b] at 75 45
	\pinlabel {$e_-$} [t] at 75 3
	\pinlabel {$e_+$} [b] at 258 142
	\pinlabel {$e_-$} [t] at 258 101
	\pinlabel {$e_+$} [b] at 258 45
	\pinlabel {$e_-$} [t] at 258 3
	\pinlabel {$e_+$} [b] at 501 142
	\pinlabel {$e_-$} [t] at 501 101
	\pinlabel {$e_+$} [b] at 501 45
	\pinlabel {$e_-$} [t] at 501 3
	\pinlabel {$e_+$} [b] at 686 142
	\pinlabel {$e_-$} [t] at 686 101
	\pinlabel {$e_+$} [b] at 686 45
	\pinlabel {$e_-$} [t] at 686 3
	\pinlabel {isotopy} [ ] at 374 81
	\pinlabel {$B$} [ ] at 163 59
	\pinlabel {$B$} [ ] at 590 59
	\pinlabel {$S_\varepsilon$} [ ] at 12 4
	\pinlabel {$-S_0$} [ ] at 310 4
	\pinlabel {$S_\varepsilon$} [ ] at 442 4
	\pinlabel {$-S_0$} [ ] at 741 4
	\pinlabel {${\color{Plum}B}$} [ ] at 122 136
	\pinlabel {$I_+$} [ ] at 313 135
	\pinlabel {$\gamma_1^+$} [ ] at 29 93
	\pinlabel {$\gamma_3^+$} [ ] at 80 128
	\pinlabel {$1$} [ ] at 487 123
	\pinlabel {$3$} [ ] at 547 86
	\pinlabel {{\color{cyan}$b_i^+$}} [ ] at 550 136
	\pinlabel {{\color{cyan}$\beta_i^+$}} [ ] at 594 148
	\pinlabel {$1$} [ ] at 690 132
	\pinlabel {$3$} [ ] at 718 119
	\pinlabel {$1$} [ ] at 707 40
	\pinlabel {$3$} [ ] at 733 60
	\endlabellist
\begin{center}
\includegraphics[scale=0.6]{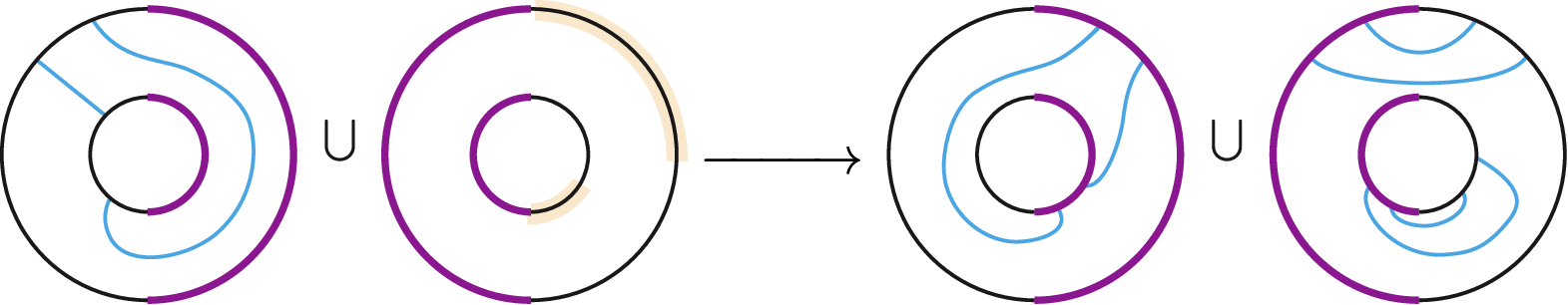}
\end{center}

\caption{The sorting arcs $\gamma_i^+$ (on the left) and their images $\beta_i^+$ (on the right) after the isotopy taking their endpoints to the highlighted subintervals $I_+\subset A_0$. The parts of $\beta_i^+$ lying on $S_\epsilon$ are $b_i^+$.}\label{fig:HD1_betaplus}
\end{figure}
	
	Figure \ref{fig:HD2_betaminus} shows the sorting arcs $\gamma_{i}^-$ on $S_0$ and, after mirroring, their images $\beta_i^- = -h(\gamma_{i}^-)$ on $-S_0$; recall that in this example $h$ is the identity. Recall that $\betas^a = \{\beta_i^{+} \mid i\in H_+\}\cup \{\beta_i^{-} \mid i\in H_-\}$.	
	
	\begin{figure}[h]
		\labellist
		\pinlabel {$e_+$} [b] at 75 142
		\pinlabel {$e_-$} [t] at 75 101
		\pinlabel {$e_+$} [b] at 75 45
		\pinlabel {$e_-$} [t] at 75 3
		\pinlabel {$e_+$} [b] at 258 142
		\pinlabel {$e_-$} [t] at 258 101
		\pinlabel {$e_+$} [b] at 258 45
		\pinlabel {$e_-$} [t] at 258 3
		\pinlabel {$\gamma_4^-$} [B] at 29 78
		\pinlabel {$\gamma_2^-$} [B] at 114 40
		\pinlabel {$S_0$} [ ] at 15 0
		\pinlabel {$-S_0$} [ ] at 308 0
		\pinlabel {$\beta_4^-$} [B] at 297 78
		\pinlabel {$\beta_2^-$} [B] at 208 40
		\endlabellist
		\begin{center}
			\includegraphics[scale=0.61]{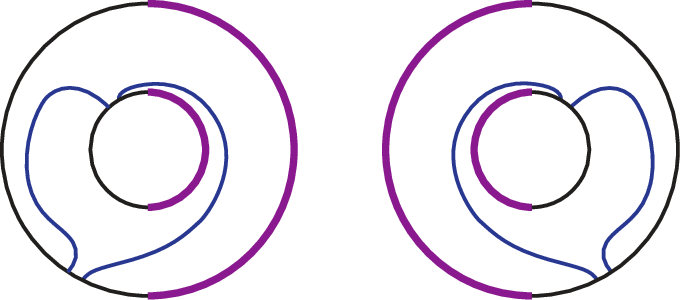}
		\end{center}
			\caption{The sorting arcs $\gamma_i^-$ and their images $\beta_i^- = -h(\gamma_{i}^-)$ on $-S_0$.}\label{fig:HD2_betaminus}
	\end{figure}

	Observe that in this example we do not need any cutting arcs $\boldsymbol b$, because $S_\epsilon\setminus \{b_i^+\}_{i=1,3}$ consists of two disks, each with exactly one interval of $A_\epsilon$ on its boundary, see Figure \ref{fig:HD3_R}. Therefore, $\betas^c$ is empty, and we have no $a_i$ arcs, either.	
	\begin{figure}[h]
		\labellist
		\pinlabel {$e_+$} [b] at 75 142
		\pinlabel {$e_-$} [t] at 75 101
		\pinlabel {$e_+$} [b] at 75 45
		\pinlabel {$e_-$} [t] at 75 3
		\pinlabel {$S_{\epsilon}$} [ ] at 15 0
		\pinlabel {{\color{cyan}$b_1^+$}} [ ] at 108 145
		\pinlabel {{\color{cyan}$b_3^+$}} [ ] at 133 127
		\endlabellist
		\begin{center}
			\includegraphics[scale=0.61]{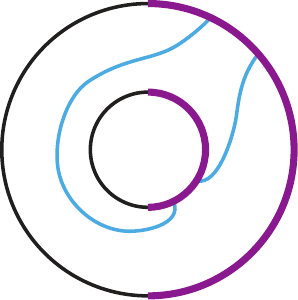}
				\end{center}
			\caption{Cutting $S_{\epsilon}$ along $b_i^+$ yields two disks, each with exactly one interval of $A_\epsilon$ on its boundary. Therefore, we do not need any more cutting arcs.}\label{fig:HD3_R}
	\end{figure}

We next define $\tilde a_i$ by pushing the endpoints of  $b_i^+$ negatively along the boundary so that $\tilde a_i$ and $b_i^+$ intersect once transversely. Figure \ref{fig:HD4_alpha} shows the arcs $\tilde a_i$ on $S_\epsilon$ and their mirror images on $-S_0$. Glued together, they form $\widetilde{\alpha}_i$.  Since this example has no $\alpha_i$ curves, we have $\alphas= \{\widetilde\alpha_i\}_{i\in H_+}$.
	
	\begin{figure}[h]
		\labellist
		\pinlabel {$S_\varepsilon$} [ ] at 12 54
		\pinlabel {$-S_0$} [ ] at 310 54
		\pinlabel {{\color{red}$\alpha_i=\emptyset$}} [l] at 97 34
		\pinlabel {{\color{red}$\tilde\alpha_j=\tilde a_j\cup-\tilde a_j$}} [l] at 96 10
		\pinlabel {{\color{red}$\alpha_c$}} [ ] at 234 20
		\pinlabel {$e_+$} [b] at 74 192
		\pinlabel {$e_-$} [t] at 74 152
		\pinlabel {$e_+$} [b] at 74 95
		\pinlabel {$e_-$} [t] at 74 55
		\pinlabel {{\color{cyan}$b_1^+$}} [ ] at 102 199
		\pinlabel {{\color{red}$\tilde a_1$}} [ ] at 114 192
		\pinlabel {{\color{cyan}$b_3^+$}} [ ] at 129 182
		\pinlabel {{\color{red}$\tilde a_3$}} [ ] at 140 166
		\pinlabel {$B$} [ ] at 162 113
		\pinlabel {{\color{red}$-\tilde a_3$}} [ ] at 184 174
		\pinlabel {{\color{red}$-\tilde a_1$}} [ ] at 207 196
		\pinlabel {$e_+$} [b] at 258 192
		\pinlabel {$e_-$} [t] at 258 152
		\pinlabel {$e_+$} [b] at 258 95
		\pinlabel {$e_-$} [t] at 258 55
		\endlabellist
		\begin{center}
			\includegraphics[scale=0.61]{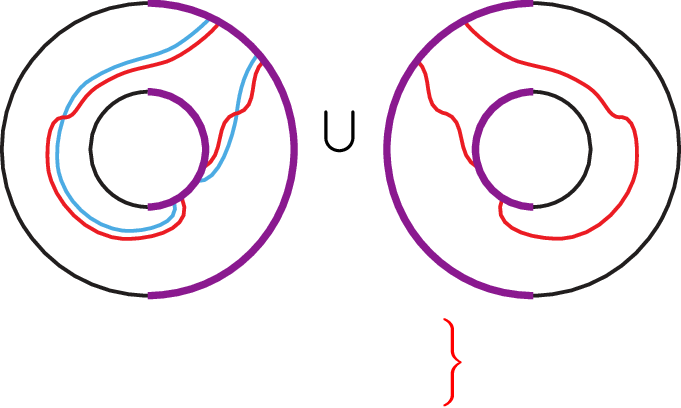}
		\end{center}
			\caption{The $\widetilde{\alpha}_i$ curves obtained from the $b_i^+$ arcs.}\label{fig:HD4_alpha}
	\end{figure}

Finally, Figure \ref{fig:HD5_diagram} illustrates the bordered sutured Heegaard diagram adapted to the sorted  foliated open book of Example \ref{ex:sorted}. In this example, the set of intersection points $\xxx$ consists only of $x_i^+ = \tilde a_i\cap b_i^+\in S_{\epsilon}$ for  $i\in H_+$.

\begin{figure}[h]
	\labellist
	\pinlabel {$e_+$} [b] at 75 140
	\pinlabel {$e_-$} [t] at 75 101
	\pinlabel {$e_+$} [b] at 75 43
	\pinlabel {$e_-$} [t] at 75 3
	\pinlabel {$e_+$} [b] at 258 140
	\pinlabel {$e_-$} [t] at 258 101
	\pinlabel {$e_+$} [b] at 258 43
	\pinlabel {$e_-$} [t] at 258 3
	\pinlabel {$S_\varepsilon$} [ ] at 15 0
	\pinlabel {$-S_0$} [ ] at 308 0
	\pinlabel {$B$} [] at 162 61
	\endlabellist
	\begin{center}
		\includegraphics[scale=0.61]{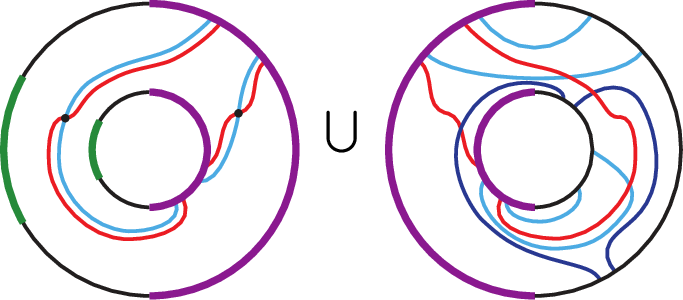}
	\end{center}
	\caption{The bordered sutured Heegaard diagram adapted to the sorted  foliated open book of Example \ref{ex:sorted}. The intersection points $x_i^+ = \tilde a_i\cap b_i^+\in S_{\epsilon}$ for $i\in H_+$ are marked as black dots. The black portion of the resulting boundary is $Z$, while the green is $\bdy\Sigma\setminus Z$.}\label{fig:HD5_diagram}
\end{figure}
\end{example}

We will use $\xxx$ to define two contact invariants in bordered sutured Floer homology. In order to do that, we first show that $\HH$ is an admissible diagram for the bordered sutured manifold associated to $(M,\xi,\mathcal{F})$. 

\begin{proposition}\label{prop:hd-fc}
Let $(\{S_i\},h, \{\gamma_i^\pm\})$ be a  sorted abstract foliated open book supporting a foliated contact three-manifold $(M,\xi,\mathcal{F})$. 
Suppose $\HH = (\Sigma, \alphas,\betas,\zz)$ is a  bordered sutured Heegaard diagram adapted to $(\{S_i\},h, \{\gamma_i^\pm\})$. Then $\HH$ gives an admissible bordered Heegaard diagram for  the bordered sutured manifold $(M,\bsGamma,\zz)$ associated to $(M,\xi,\mathcal{F})$.
\end{proposition}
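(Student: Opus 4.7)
My plan has three parts: verifying that the handle decomposition of $\HH$ recovers the topological manifold $M$, matching the induced boundary data with $(\bsGamma,\zz)$ from Definition~\ref{def:fc-bs}, and proving admissibility. I expect admissibility to be the main obstacle.

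For the first part, I would mimic the classical open book Heegaard splitting. Setting aside the bordered data, $\Sigma=S_\epsilon\cup_B-S_0$ plays the role of $\pit^{-1}(\epsilon)\cup-\pit^{-1}(0)$, so one handlebody is identified with a collar $S_0\times[0,\epsilon]$ of the page, compressed by $\alpha_i=a_i\cup-a_i$ and $\widetilde\alpha_j=\tilde a_j\cup-\tilde a_j$ via the compressing disks $a_i\times[0,\epsilon]$ and $\tilde a_j\times[0,\epsilon]$. The complementary handlebody is obtained by concatenating the elementary cobordisms $M_i$ of Section~\ref{sec:supp-cs} and gluing via $h$. The $\beta^c$-circles $\beta_j=b_j\cup-h\circ\iota(b_j)$ provide compressing disks swept through the constant subsurface $P_\epsilon$ via the identifications of Section~\ref{sssec:suff}, while each $\beta^a$-arc is exactly the cocore of the 1-handle that records the cut/add data at the corresponding hyperbolic point: for $i\in H_+$ the arc $b_i^+$ is the cocore of the handle introduced by inverting the cut at $h_i$, and for $i\in H_-$ the arc $-h(\gamma_i^-)$ is the monodromy image of the cocore of the handle added at $h_i$.

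Next, I would read off the boundary data. The suture $\bsGamma=\bigcup_II\setminus(I_+\cup I_-)$ is a positively transverse multicurve isotopic to $\pit^{-1}(0)$ since the subintervals $I_\pm$ shrink onto the elliptic endpoints. For the arc diagram, the identification from Section~\ref{sssec:suff} between $R_\pm(\fol)\subset\partial M$ and the surfaces $R_0\subset S_0$, $R_{2k}\subset S_{2k}$ sends the separatrices of each hyperbolic point of $\fol$ to the core of a neighborhood of the appropriate sorting arc, and hence to the endpoints of the corresponding $\beta^a$-arc on $\partial\Sigma$. The matching on $\zz$ defined by separatrices of hyperbolic points of $\fol$ therefore agrees with the matching induced by the $\beta^a$-arcs, recovering the $\beta$-type arc diagram of Definition~\ref{def:fc-bs}.

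The main obstacle is admissibility, which I would address by a multiplicity analysis of periodic domains $P$ on $\Sigma$. Because the collection $\{a_i\}\cup\{\tilde a_j\}\cup\{b_i^+\}_{i\in H_+}\cup\{b_j\}$ cuts $S_\epsilon$ into disks each meeting a single component of $A_\epsilon$ (using Lemma~\ref{lem:cut-gamma}), and symmetrically on $-S_0$, the doubled nature of the $\alpha$-curves $\alpha_i, \widetilde\alpha_j$ across $B$ forces the interior multiplicities of $P$ on $S_\epsilon$ and on $-S_0$ each to be constant on complementary regions, determined by the $\alpha$-coefficients and two integers $n^+, n^-$. The $\beta^c$-circles, being monodromy-doubled, propagate $n^+$ to $n^-$ via $h\circ\iota$, forcing $n^+=n^-$ for a provincial periodic domain; further analysis of the $\beta^a$-arcs, whose endpoints on $Z$ must pair up if $P$ has no $Z$-boundary, shows that then $n^\pm=0$, so the only provincial periodic domain is trivial and provincial admissibility is immediate. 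For full admissibility, a periodic domain with nonzero $Z$-boundary inherits a symmetric contribution on the two sides of $B$ from the doubled $\alpha$-curves, which forces both signs of multiplicity to appear. I expect the delicate bookkeeping here, especially tracking how boundary on $Z$ propagates across doubled $\alpha$- and $\beta$-curves and across the intricate sorting-arc configuration, to be the most subtle step.
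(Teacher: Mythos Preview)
Your first two parts are essentially what the paper does, though the paper dispatches them in one sentence: after replacing $\bsGamma=\bigcup_I I\setminus(I_+\cup I_-)$ by the isotopic choice $\bsGamma=A_\epsilon$, the fact that $\HH$ describes $(M,\bsGamma,\zz)$ is declared immediate from the construction. Your more detailed handlebody analysis is correct and would flesh this out.

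Where your proposal diverges sharply is in admissibility, which you call ``the main obstacle.'' It is not. The paper's argument is the one-line open-book observation you may recognise from \cite{HKM09_HF}: every $\alpha$-circle (each $\alpha_i$ or $\widetilde\alpha_j$) meets $\betas$ at exactly one point on $S_\epsilon$, namely $x_i$ or $x_i^+$, by construction of $a_i,\tilde a_j$ as small push-offs of $b_i,b_j^+$. Any periodic domain whose boundary uses that $\alpha$-circle with nonzero coefficient must change sign when crossing this unique intersection, so it has both positive and negative multiplicities. That is the entire admissibility proof.

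Your proposed multiplicity analysis is not only far more elaborate than necessary, it contains a questionable step: the claim that multiplicities of $P$ on $S_\epsilon$ and $-S_0$ are ``constant on complementary regions, determined by the $\alpha$-coefficients and two integers $n^+,n^-$'' is not correct as stated, since $\beta$-curves also bound regions and change multiplicities. The subsequent propagation argument via $h\circ\iota$ and the $\beta^a$ endpoint-pairing is left vague precisely where it would need to be made precise. You should replace this entire section with the unique-intersection argument above.
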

\begin{proof}

Starting from a sorted foliated open book $(\{S_i\},h, \{\gamma_i^\pm\})$, we may either first construct a foliated contact manifold $(M,\xi,\mathcal{F})$ as in Section~\ref{sec:supp-cs} and an associated bordered sutured manifold $(M,\bsGamma,\zz)$ as in Section~\ref{ssec:suturedbordered}, or we may start by constructing a bordered sutured Heegaard diagram $\HH = (\Sigma, \alphas,\betas,\zz)$ as above. To temporarily distinguish between the two constructions of the set of matched arcs, we will in fact temporarily replace this notation with $\HH = (\Sigma, \alphas,\betas,\zz')$, with $Z'$ the set of arcs in $\zz'$. We may then use the construction of Section~\ref{sssec:bs} to produce a bordered sutured manifold. We wish to verify that the bordered sutured manifold arising this way is $(M, \bsGamma, \zz)$.

Recall that the foliated contact manifold $(M, \xi, \mathcal{F})$ is built via saddle cobordisms specified by the sorting arcs. We intend for the $\alpha$ curves to specify the handlebody from $S_0$ to $S_{\epsilon}$ and for the $\beta$ curves to specify the handlebody from $S_{\epsilon}$ to $S_0$. We will start by modifying the graph $G(\zz')\subset \Sigma$ and the bordered sutured structure on $\bdy M$ via isotopy, allowing us to identify the Heegaard surface $\Sigma$ with the surface $S_\epsilon\cup_B-S_0$ in $M$ in such a way that $\bdy\Sigma\setminus Z'$ is identified with $\bsGamma$. (This is equivalent to recovering $\bsGamma$ as $(\bdy\Sigma\setminus Z') \times \{\frac{1}{2}\}$ in a thickened copy of the surface $\Sigma \times [0,1]$.)  Recall that we chose $Z'$ to be the union of $A_0$ with the arcs $\bigcup_{I\subset A_{\epsilon}}(I_+\cup I_-)$; the purpose of this choice is to enable a simple discussion of gluing in Section \ref{sec:gluing}. However, we may isotope $Z'$ inward along the arcs $I_{\pm}$ to $A_0$.  On $\bdy M$, we may isotope $Z$ (resp.\ $\bsGamma$) through the disks bounded between $Z$ and $A_0$ (resp.\ $A_0$ and $A_\epsilon$) and identify it with $A_0$ (resp.\ $A_\epsilon$). This isotopy can be chosen so that it carries the matched points on $Z$ to the matched points on $Z'$ (as identified with $A_0$). Note that under this isotopy the edges $e_i$ of the graph $G(\zz)$ will twist near their endpoints around the elliptic points.

Next, $\{\widetilde{a}_i\}_{i\in H_+}\cup\{a_1,\cdots,a_{2g_0+n_0+|A_0|-k-2}\}$ is a cut system for $S_{\epsilon}$. At each time $0\leq t\leq \epsilon$, we may consider the image of each of these arcs on $\pi^{-1}(t)$ coming from applying the flow of $\pi$; for each arc, the union of these images is a disk with boundary the corresponding $\alpha$-circle. (We say that these arcs \emph{trace out} disks in time $[0,\epsilon]$ whose boundaries are the $\alpha$-circles.) So, $\alphas$ specifies the handlebody from $S_0$ to $S_{\epsilon}$. Similarly, the cutting arcs $\{b_1,\cdots,b_{2g_0+n_0+|A_0|-k-2}\}$ for $P_\epsilon$ will trace out disks in time $[\epsilon,2\pi]$, whose boundaries under the identification of $S_{2\pi}$ with $S_0$ using monodromy $h$ are the $\beta$-circles on $\Sigma$.

Finally, for each $i\in H_-$, flowing backward from time $t=i\pi/k$ to $t=\epsilon$, the sorting arc $\gamma_i^-$  traces out a disk corresponding to the cocore of the handle addition. The boundary of this disk is the arc $\beta^-_i$ together with the twisted and truncated copy of the separatrix $\delta_i$, that is, with the twisted arc $e_i$. These are the disks attached along the arcs $\beta^-_i$. Likewise, for $i\in H_+$, flowing forward from $t=i\pi/k$ to $t=2\pi$, the sorting arc $\gamma_i^+$ traces out a disk corresponding to the cutting arc for the handle subtraction, whose boundary under the identification of $S_{2\pi}$ with $S_0$ via the monodromy $h$ is the arc $\beta_i^+$ together with the twisted and truncated copy of the separatrix $\delta_i$. These are the disks attached along the arcs $\beta^+_i$.

As for admissibility, we notice that just as in the closed or sutured cases, admissibility is automatic:  every periodic domain with an $\alpha$-circle on its boundary crosses the only intersection of that circle with $\betas$ curves or circles on $S_{\epsilon}$, at which point the sign of the connected component of $\Sigma\setminus (\alphas\cup\betas)$ must change.
\end{proof}

If $\HH$ is a bordered  sutured Heegaard diagram adapted to  a sorted abstract foliated open book $(\{S_i\},h, \{\gamma_i^\pm\})$ and  $(\{S_i\},h, \{\gamma_i^\pm\})$ is compatible with the foliated contact three-manifold $(M,\xi,\mathcal{F})$, we say that $\HH$ is \emph{adapted} to $(M,\xi,\mathcal{F})$.

By Proposition~\ref{prop:hd-fc} and \cite[Section 3.4]{bs-JG}, the diagram ${\overline{\HD}=(\Sigma, \betas,\alphas,\overline{\zz})}$ obtained by exchanging the roles of the two sets of curves and formally replacing the arc diagram $\zz$ of $\beta$-type (which is to say, parametrized by arcs which are part of the second set of curves)  with the identical arc diagram $\overline{\zz}$ of $\alpha$-type (parametrized by arcs which are part of the first set of curves) is a bordered sutured diagram for $(-M,\bsGamma,\overline{\zz})$. Write $\overline{\zz}=(Z, a, m)$. We have the following proposition:

\begin{proposition}\label{prop:xd-def}
The above $\xxx$ gives a well defined generator
\[
\xxx_D := \xxx\in \bsdhat(\overline{\HH})
\]
with  $I_D(\xxx) = I(H_-)$ and $\delta^1(\xxx_D) = 0$, 
and a well defined generator
\[
\xxx_A  := \xxx\in \bsahat(\overline{\HH})
\]
with $I_A(\xxx_A) = I(H_+)$ and $m_{i+1}(\xxx_A, a(\brho_1), \ldots, a(\brho_i))=0$ for all $i\geq 0$ and all sets of Reeb chords $\brho_j$ in $(Z, a)$.
\end{proposition}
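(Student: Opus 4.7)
The plan is to establish the proposition in two essentially independent steps: an idempotent bookkeeping that follows directly from the construction, and a domain-count argument, modelled on the Honda--Kazez--Mati\'c proof for the sutured contact class, that rules out every $\mathcal{A}_\infty$-disk with input $\xxx$.

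For the idempotents, I would observe that in $\overline{\HH}$ the $\alpha$-arcs are exactly the reversed arcs $\betas^a = \{\beta_i^+\}_{i\in H_+}\cup\{\beta_i^-\}_{i\in H_-}$. Among the points of $\xxx$, the only ones lying on an $\alpha$-arc are the $x_i^+ = \tilde a_i\cap b_i^+$ for $i\in H_+$, because $b_i^+\subset \beta_i^+$; the arcs $\beta_i^-$ for $i\in H_-$ sit entirely on $-S_0$, which is disjoint from $\xxx\subset S_\epsilon$, so they contribute no occupied arc. Under Zarev's conventions the type-$A$ idempotent of a generator records the occupied $\alpha$-arcs while the type-$D$ idempotent records the complement, which yields $I_A(\xxx_A) = I(H_+)$ and $I_D(\xxx_D) = I([2k]\setminus H_+) = I(H_-)$.

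For the vanishing claims, any disk contributing to $\delta^1(\xxx_D)$ or to $m_{i+1}(\xxx_A,a(\brho_1),\ldots,a(\brho_i))$ has as shadow a non-negative domain $D$ on $\Sigma$ with an incoming corner at $\xxx$, zero multiplicity in the regions adjacent to the sutured boundary, and, in the type-$A$ case, east asymptotics at $Z$ recording the chords $\brho_j$. I plan to show that any such $D$ is identically zero, which immediately yields both statements. The strategy is a quadrant-by-quadrant analysis at each $x\in\xxx$. Because $a_i$ and $\tilde a_i$ were built by pushing the endpoints of $b_i$ and $b_i^+$ negatively along $B$, at each $x$ one of the four local quadrants lies in a region whose only route out of $\Sigma$ is through the sutured boundary or through the subinterval $I_-\subset A_\epsilon$ adjacent to $\bsGamma$; that region therefore has multiplicity zero, and the incoming-corner relation then forces the multiplicities in the remaining quadrants at $x$ to vanish as well. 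Propagating these values along each $\alpha$-curve of $\overline{\HH}$, where the multiplicity of $D$ changes only by compatible increments across intersections with $\betas$, forces $D\equiv 0$ throughout $\Sigma$.

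The delicate point, and the principal obstacle, is the type-$A$ case with $i\geq 1$, where positive Reeb-chord asymptotics on $Z$ a priori permit positive multiplicities in regions abutting $Z$ and could conflict with the local vanishing near the $x_i^+$. The plan is to exploit the specific placement of $Z$ as a pushoff of $\pit^{-1}(-\epsilon)$ into $R_-(\bsGamma)$: any Reeb chord in $\overline{\zz}$ travels along $Z$ on the $R_-(\bsGamma)$ side of $\bsGamma$, whereas the only quadrants at $x_i^+$ that $D$ could occupy open toward $I_+$, on the opposite side of the boundary past $b_i^+$. A careful orientation check will show that every nonempty input of Reeb chords would force positive multiplicity in a region already constrained to be zero by the quadrant analysis, so the only admissible input is the empty one and the only admissible domain is $D\equiv 0$, completing the proof.
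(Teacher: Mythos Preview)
Your idempotent computation is correct and matches the paper's. The broad strategy for the vanishing---a local analysis at each $x\in\xxx$ in the spirit of Honda--Kazez--Mati\'c---is also the right one. But the execution has a real gap, and your Reeb-chord worry leads you down an unnecessary path.

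The problematic step is: ``one quadrant has multiplicity zero, and the incoming-corner relation then forces the multiplicities in the remaining quadrants at $x$ to vanish as well.'' This implication fails. Knowing one quadrant has multiplicity $0$ together with the source-corner constraint (an alternating-sum relation among the four local multiplicities) does not force the other three to vanish; for instance, multiplicities $(0,0,1,0)$ around $x$ are perfectly compatible with $x$ being a $-\infty$ corner and with nonnegativity. Your subsequent ``propagation along $\alpha$-curves to get $D\equiv 0$'' is likewise not justified: multiplicities can and do jump across intersections with $\betas$ that are corners of the \emph{target} generator, so zero does not propagate globally. You do not need $D\equiv 0$ anyway; you only need that no positive domain has a source corner at every $x\in\xxx$.

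What you are missing is the single observation that does all the work in the paper's proof and makes the Reeb-chord discussion irrelevant. At each $x$, the two quadrants not lying in suture-adjacent regions are pieces of the thin bigon between $a$ and $b$ (or $\tilde a$ and $b^+$). Because $a$ was obtained by pushing the endpoints of $b$ \emph{negatively} along $B$, the positively oriented boundary of each thin strip near $x$ runs along a $\beta$-segment into $x$ and then out along an $\alpha$-segment. That is exactly the \emph{wrong} local orientation for $x$ to be a $-\infty$ corner of a holomorphic curve with boundary on $(\betas\times\{1\}\times\R)\cup(\alphas\times\{0\}\times\R)$. So at every $x\in\xxx$, each of the four adjacent regions is either suture-adjacent (multiplicity $0$) or a thin strip that cannot appear in the projection of such a curve. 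Hence no index-one curve is asymptotic to $\xxx$ at $-\infty$, regardless of whether it carries Reeb chords at east infinity; the argument treats $\delta^1$ and all $m_{i+1}$ uniformly, and no separate boundary analysis is needed.
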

\begin{proof}
Since each of the intersections in the definition of $\xxx$ is unique,  the two generators are well defined. The idempotents are as stated, since the $\beta$-arcs occupied by $\xxx$ are indexed by $H_+$, whereas the unoccupied $\beta$-arcs are indexed by $H_-$. Last, consider  regions of $\Sigma\setminus(\alphas\cup\betas)$ that have a point $x\in \xxx$ as a corner vertex.  Each such region either has an interval of $\bdy\Sigma\setminus Z$ on its boundary or it has some corner near which it  agrees with one of the thin strips obtained by perturbing the $b_i$ and $b_j^+$ arcs to $a_i$ and $\tilde a_j$ arcs. The positively oriented boundary of such a thin strip near $x$ has a segment of a $\beta$ curve ending at $x$, and a segment of an $\alpha$ curve starting at $x$.  Such a region cannot be in the projection $\pi_{\Sigma}\circ u$ of an index $1$ holomorphic map  
\[u:(S, \bdy S) \to  (\Int(\Sigma)\times \mathbb D \times \R, (\betas\times \{1\}\times \R)\cup (\alphas\times \{0\}\times \R))\]
satisfying \cite[Section 5.2, Conditions (1)-(11)]{bs-JG} and asymptotic to $\xxx\times [0,1]$ at $-\infty$.
Thus, $\delta^1(\xxx_D) = 0$, and $m_{i+1}(\xxx_A, a_1, \ldots, a_i)=0$ for all $i\geq 0$ and all $a_j\in \mathcal A(\bdy \overline{\HH})$.
\end{proof}


\section{Invariance}
\label{sec:invariance}

In this section we complete the proof of Theorem \ref{thm:ca-cd}. Specifically, we show that given two different sets of choices made in constructing a bordered sutured Heegaard diagram for the bordered sutured three-manifold ${(-M,\bsGamma, \overline{\zz})}$ associated to a foliated contact three-manifold $(M,\xi, \mathcal{F})$ resulting in Heegaard diagrams $\HD$ and $\HD'$ with associated type $A$ bordered sutured Floer homologies $\bsahat(\HD)$ and $\bsahat(\HD')$, the contact elements $\x_A$ and $\x'_A$ are equivalent in the sense of Section~\ref{sssec:bs}, and likewise for $\x_D$ and $\x_D'$. We prove equivalence of these classes under

\begin{enumerate}
\item isotopy  of the monodromy $h$ of the foliated open book $(\{S_i\},h, \{\gamma_i^\pm\})$ for $(M,\xi, \mathcal{F})$;
\item  the choice of complex structure;
\item stabilizations of the foliated open book $(\{S_i\},h, \{\gamma_i^\pm\})$ on the $S_0$ page; and
\item the choice of the cutting arcs made when constructing the bordered Heegaard diagram in Section~\ref{ssec:bordereddiagram}.
\end{enumerate}

Note that we do not have to prove invariance under time shift, since the definition of a triple $(M,\xi, \mathcal{F})$ has an implicit choice of initial time $t=0$; see Remark~\ref{rmk:t}. 
Invariance under the choice of complex structure follows from \cite[Theorem 7.8]{bs} and the observation that since there are no nontrivial topological domains beginning at the generator ${\bf x}$ on the bordered sutured Heegaard diagram $\HD$, the type $A$ and type $D$ chain homotopy equivalences between bordered sutured complexes associated to pairs $(\HD, J_1)$ and $(\HD, J_2)$ count a single pseudoholomorphic strip constant in the $\Sigma$ coordinate beginning at $\x_D$ or $\x_A$, and thus in each case carry the special generator to itself. It is a consequence of Theorem~\ref{thm:revsortedsupport} that stabilizations on the $S_0$ page are sufficient.

We thus focus on the remaining three choices made in the construction of the contact invariants. For invariance under the isotopy class of $h$ and choice of the cutting arcs our proofs heavily parallel those of \cite[Section 3]{HKM09_HF} and \cite[Section 3]{hkm09}; our proof of stabilization invariance has a slightly different structure.

\begin{proposition}\label{prop:homot} Suppose we choose two representatives for the isotopy class of $h$, resulting in bordered sutured Heegaard diagrams $\HD$ and $\HD'$. The resulting contact elements $\x_A \in \bsahat(\HD)$ and $\x'_A \in \bsahat(\HD')$ are equivalent in the sense of Section~\ref{sssec:bs}, and likewise for $\x_D$ and $\x_D'$.
\end{proposition}

\begin{proof}
The proof given in \cite[Lemma 3.3]{HKM09_HF} carries over to the bordered sutured case essentially verbatim. As previously, the point is that there are no holomorphic curves beginning at the generators $\x_A$ and $\x_D$.
\end{proof}

We now turn our attention to equivalence under choice of the cutting arcs in the case that we start with a sufficiently stabilized foliated open book. (Subsequently, in Proposition \ref{prop:stab-invariance}, we will prove stabilization invariance, thus showing invariance of the choice of cutting arcs for all sorted foliated open books.)

First we show that two different choices of cutting arcs  ${\boldsymbol b}$ and $\widetilde{{\boldsymbol b}}$ for $P_{\epsilon}\subset S_{\epsilon}$ are related by arcslides on $P_{\epsilon}$.

\begin{proposition}\label{prop:invt-cut} 
Suppose that $(\{S_i\},h,\{\gamma_i\})$ is a sufficiently stabilized foliated open book. Then
any two sets of cutting arcs ${\boldsymbol b} = \{b_1, \ldots, b_{2g_0+n_0+|A_0|-k-2}\}$ and 
$\widetilde{{\boldsymbol b}} = \{\widetilde{b}_1, \ldots,  \widetilde{b}_{2g_0+n_0+|A_0|-k-2}\}$  for $P_{\epsilon}$ are related by a sequence of arcslides. 
\end{proposition}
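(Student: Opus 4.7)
The strategy is to reduce the proposition to the classical surface-topology fact that any two systems of properly embedded arcs that cut a surface with boundary into disks are related by a sequence of arcslides. Since the foliated open book is sufficiently stabilized, Section~\ref{sssec:suff} (in particular Theorem~\ref{thm:fobtopob}) gives us a genuine partial open book structure $(\widetilde S, \widetilde P, \widetilde h)$ with $\widetilde P \cong P_0 \cong P_\epsilon$. Recalling that ${\boldsymbol b}$ is a basis for $H_1(P_\epsilon, B)$, the problem reduces to showing that any two bases of properly embedded arcs in $P_\epsilon$ with endpoints on $B$, each cutting $P_\epsilon$ into disks so that the induced decomposition of $S_\epsilon$ by $\boldsymbol b \cup \{b_i^+\}_{i\in H_+}$ has one interval of $A_\epsilon$ per component, are related by arcslides supported in $P_\epsilon$.

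\textbf{Main argument.} I would proceed by induction on the geometric intersection number $|{\boldsymbol b} \cap \widetilde{\boldsymbol b}|$, after first placing the two systems in minimal transverse position by isotopies rel~$B$. If this intersection is nonempty, pick an outermost subdisk $D \subset P_\epsilon$ cut off by $\widetilde{\boldsymbol b}$ --- i.e.,~$\partial D$ consists of a single arc $\widetilde b_j \in \widetilde{\boldsymbol b}$ together with arcs of $\partial P_\epsilon$. Inside $D$, select an outermost subdisk $D'$ cobounded by a subarc of some $b_i \in {\boldsymbol b}$ and a subarc of $\widetilde b_j$. The disk $D'$ permits a sequence of arcslides of $b_i$ across those $b_k \in {\boldsymbol b}$ met by the relevant portion of $\widetilde b_j$, yielding a new cutting system ${\boldsymbol b}'$ with $|{\boldsymbol b}' \cap \widetilde{\boldsymbol b}| < |{\boldsymbol b} \cap \widetilde{\boldsymbol b}|$. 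Iterating reduces to the disjoint case.

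\textbf{Base case.} When ${\boldsymbol b}$ and $\widetilde{\boldsymbol b}$ are disjoint, both cut $P_\epsilon$ into the same number of disks (prescribed by $\chi(P_\epsilon)$ and the combinatorics along $B$). A standard arcslide argument --- parallel to the classical handleslide equivalence of cut systems of a compression body and to the analogous step in \cite{HKM09_HF} and \cite{hkm09} --- identifies the two disjoint systems up to arcslides and isotopy.

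\textbf{Anticipated obstacle.} The main subtlety will be verifying that every arcslide is realized by an isotopy entirely within $P_\epsilon$, so that no arc ever crosses the set $\{b_i^+\}_{i\in H_+}$ and so that each intermediate cutting system still has the property that the complementary pieces of $S_\epsilon \setminus ({\boldsymbol b} \cup \{b_i^+\})$ contain exactly one interval of $A_\epsilon$. Since $P_\epsilon = \overline{S_\epsilon \setminus R_\epsilon}$ is disjoint from a neighborhood of the $\{b_i^+\}$ by construction, and since arcslides supported in $P_\epsilon$ automatically preserve the number of complementary disks, both conditions should be maintained throughout the process.
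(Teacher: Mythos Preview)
Your overall two–step strategy (reduce geometric intersection number, then handle the disjoint case) matches the paper's approach. However, there is a genuine gap: you never use the hypothesis \emph{sufficiently stabilized} in any essential way, and the place where it enters is precisely the obstacle you dismiss as automatic.

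The boundary $\partial P_\epsilon$ has two kinds of arcs: segments of the binding $B$ and segments of $\partial R_\epsilon$. The cutting arcs $b_i$ have endpoints only on $B$, and an arcslide $b_i \mapsto b_i + b_j$ requires the relevant endpoints to be adjacent along $\partial P_\epsilon$; an endpoint cannot be pushed through a $\partial R_\epsilon$ segment without leaving $B$. When $P_\epsilon$ is cut along $\boldsymbol b$ into polygons, the paper shows each polygon carries exactly one $\partial R_\epsilon$ segment on its boundary. In the inductive step one wants to slide some $b_1$ around the boundary of a sub-polygon to reduce intersection with $\widetilde{\boldsymbol b}$, but that sub-polygon may contain the $\partial R_\epsilon$ segment, which blocks the slide. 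The paper resolves this using the sufficiently-stabilized hypothesis --- that every component of $P_\epsilon$ meets $\partial R_\epsilon$ in at least two intervals --- to find an arc $c\in\boldsymbol b$ whose partner $c^{-1}$ lies in a different polygon, then slide $c$ until it is parallel to the obstructing $\partial R_\epsilon$ segment, and finally slide $b_1$ over this new parallel copy instead. The same obstruction and the same fix appear in the disjoint base case. Your proposal invokes sufficiently stabilized only to obtain a partial open book, which is not what is needed here; without the maneuver above, the reduction can stall.

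A secondary issue: your ``outermost subdisk $D$ cut off by a single arc $\widetilde b_j$'' need not exist --- for a general $P_\epsilon$, no single arc of $\widetilde{\boldsymbol b}$ cuts off a disk. The paper instead works inside a component $C$ of $P_\epsilon\setminus\boldsymbol b$ and locates an outermost subarc $\widetilde b_1^0\subset\widetilde b_1$ running from a $B$-segment to some $b_1$ within $C$; this is what sets up the slide.
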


\begin{proof}
The proof is modeled on \cite[Lemma 3.3]{hkm09} and \cite[Lemma 3.6]{HKM09_HF}. Let $r=2g_0+n_0+|A_0|-k-2$.

First, we will show that $\cup_{i=1}^r b_i$ and $\cup_{i=1}^r \widetilde{b}_i$ can be made disjoint from each other by performing a sequence of arcslides. Then we verify the statement of the proposition for the case of two disjoint sets of cutting arcs. Suppose that $\left(\cup_{i=1}^r b_i\right)\cap \left(\cup_{i=1}^r \widetilde{b}_i\right)\neq \emptyset$. We may assume the curves intersect efficiently. We can decrease the number of intersections as follows.

Cut $P_{\epsilon}$ along the arcs in  ${\boldsymbol b}$ to obtain a set of polygons. For each cutting arc $b_i$, label the two resulting boundary segments on $\bdy(P_{\epsilon}\setminus {\boldsymbol b})$ by  $b_i$ and $b_i^{-1}$, and call these two segments \emph{partners}.

First we show that every connected component of $P_\epsilon\setminus\boldsymbol{b}$ intersects $R_\epsilon$ in exactly one interval. To see that each connected component of $P_\epsilon\setminus\boldsymbol{b}$ intersects $R_\epsilon$, observe that otherwise the boundary of the component would only contain arcs in $B$ and in $\boldsymbol{b}$. But the connected components of $P_\epsilon\setminus\boldsymbol{b}$ are naturally identified with the connected components of $S_{\epsilon}\setminus ({\boldsymbol b}\cup \betas^a)$ and by the definition of cutting arcs, each connected component of $S_{\epsilon}\setminus ({\boldsymbol b}\cup \betas^a)$ is a disk with exactly one interval of $A_{\epsilon}$ on its boundary. Moreover, $P_\epsilon\setminus\boldsymbol{b}$ cannot intersect $R_\epsilon$ in more intervals. Assume it intersects $R_\epsilon$ in at least two intervals. There are two types of such a segment of $R_\epsilon$: either it corresponds only to an arc in  $A_\epsilon$ or also to an arc of type $\gamma^\pm_i$ whose both endpoints connect to arcs in $A_\epsilon$. In this latter case, the isotopy of $\gamma^\pm_i$ resulting in $\betas^\pm_i$ leaves at least one of these arcs in $A_\epsilon$ on the boundary of the connected component of  $S_{\epsilon}\setminus ({\boldsymbol b}\cup \betas^a)$. In the former case, the only arc in $A_\epsilon$ is also on the boundary of the corresponding connected component of $S_{\epsilon}\setminus ({\boldsymbol b}\cup \betas^a)$.
In both cases, the segment of $R_\epsilon$ corresponds to an arc in $A_\epsilon$ on $\partial S_{\epsilon}\setminus ({\boldsymbol b}\cup \betas^a)$. Therefore, our assumption contradicts the fact that $S_{\epsilon}\setminus ({\boldsymbol b}\cup \betas^a)$ contains exactly one interval of $A_{\epsilon}$ on its boundary.

Consider a connected component $C$ of $P_{\epsilon}\setminus {\boldsymbol b}$ containing an intersection point of $\cup_{i=1}^r b_i$ and $\cup_{i=1}^r \widetilde{b}_i$.
Then $C$ is a polygon with some number of edges in ${\boldsymbol b}$ (of  types $b_j$ and $b_j^{-1}$), some number of edges in $B$, and exactly one edge in $R_{\epsilon}$. 

After possibly reordering the elements of ${\boldsymbol b}$, we may assume that $b_1 \cap \widetilde{b}_1\neq \emptyset$ and that we can find a subarc $\widetilde{b}_1^0\subset \widetilde{b}_1$ in $C$ such that $\widetilde{b}_1^0$ starts from a segment $B_1$ of the binding $B$, ends on $b_1$, and has interior disjoint from $\cup_{i=1}^r b_i$, as in Figure \ref{fig:aslicedisj}. We may also assume that $b_1$ and $B_1$ are not adjacent, as we could otherwise isotope $\widetilde{b}_1^0$ to decrease the number of intersection points between $\widetilde{b}_1^0$ and $b_1$. 
\begin{figure}[h]
\begin{center}
\labellist
  	\pinlabel $\textcolor{red}{b_1}$ at 4 40
	\pinlabel $\textcolor{red}{\widetilde{b}_1}$ at 12 8
	\pinlabel $\textcolor{red}{\widetilde{b}_1^0}$ at 81 39
	\pinlabel $C$ at 50 70
	\pinlabel $\textcolor{violet}{B_1}$ at 102 100
\endlabellist
\includegraphics[scale=0.9]{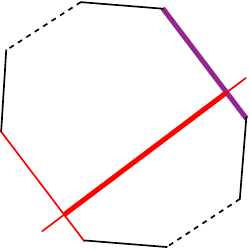}
\end{center}
\caption{The polygon $C$, with the subarc $\widetilde{b}^0_1$ crossing between the nonadjacent arcs $b_1$ and $B_1$.}
\label{fig:aslicedisj}
\end{figure}

Now $\widetilde{b}_1^0$ cuts $C$ into two polygons $C_1$ and $C_2$, at least one of which does not contain $b_1^{-1}$. Suppose first that  $C_1$ does not contain $b_1^{-1}$ and also that $\partial C_1\cap \partial R_{\epsilon}=\emptyset$. In this case, we may slide $b_1$ over the remaining arcs in $\partial C_1$ until we obtain $b^\prime_1$. Note that $b'_1$ has a subarc which lies on $b_1$, while the rest of $b'_1$ lies in a small neighborhood of $\widetilde{b}_1$. This means that no new intersection points are created, while we cancel the one we started with. Therefore, $b^\prime_1\cup b_2\cup ...\cup b_r$ has fewer intersection points with $\cup_{i=1}^r \widetilde{b}_i$. We proceed similarly in the case when  $\partial C_1\cap \partial R_{\epsilon}\neq\emptyset$ but $b_1^{-1}$ is not contained in $\partial C_2$, because then we may slide $b_1$ over the remaining arcs in $\partial C_2$ to get $b^\prime_1$. The case when $\partial C_1$ intersects $\partial R_{\epsilon}$ in an arc $\gamma$ and $C_2$ contains $b_1^{-1}$ on its boundary is handled as follows. Since the foliated open book is sufficiently stabilized, any connected component of $P_{\epsilon}$ intersects $\partial R_{\epsilon}$ in at least two intervals. Therefore, there is an arc $c\subset \partial C$ which is of type $b_i$ or $b_i^{-1}$ such that $c^{-1}$ is not in $\partial C$; otherwise, $C$ would glue up to a connected component of $P_{\epsilon}$ intersecting $\partial R_{\epsilon}$ in only one interval.
Sliding $c$ over all other arcs in $\partial C$ except $\gamma$ gives us  $c'$ parallel to $\gamma$. After this, we may slide $b_1$ over the arcs in $\partial C_1$, since now we only need to slide it over $c'$ instead of $\gamma$. Finally, we get $b^\prime_1$ which has a subarc lying on $b_1$, while the rest lies in a small neighborhood of $\widetilde{b}_1$ as before. This means that no new intersection points are created, while we cancel the one we started with. Therefore, $\{b^\prime_1,b_2,...,b_r\}$ has fewer intersection points with $\cup_{i=1}^r \widetilde{b}_i$ than $\cup_{i=1}^r b_i$ did. Iterating this procedure a finite number of times, we obtain a new set of cutting arcs which are disjoint from the arcs $\widetilde{\boldsymbol b}$.

Next, we show that for $\cup_{i=1}^r b_i$ and $\cup_{i=1}^r \widetilde{b}_i$ disjoint, ${\boldsymbol b}$ can be turned into $\widetilde{{\boldsymbol b}}$ via a finite sequence of arcslides. Let $C$ denote a connected component of $P_{\epsilon}\setminus {\boldsymbol b}$ as before. If each arc $\widetilde{b}_i$ in $C$ is parallel to some $b_j$ or $b_j^{-1}$, we are done with $C$. Suppose that there is an arc, say $\widetilde{b}_1$, which is not parallel to any $b_j$ or $b_j^{-1}$. Then $\widetilde{b}_1$ cuts $C$ into two pieces $C_1$ and $C_2$, with the property that each $\partial C_i$ contains more than one arc of type $b_j$ or $b_j^{-1}$. Choose the labeling such that $C \cap R_{\epsilon}$ is in $C_1$. Moreover, we can find a pair of arcs $b_j$ and $b_j^{-1}$ such that $b_j \subset \partial C_2$ and $b_j^{-1}\not\subset \partial C_2$ (or vice versa);  otherwise every segment of $\partial C_2$ has its partner in $\partial C_2$, implying that some part of $C_2$ glues up to a connected component of $P_{\epsilon}\setminus \boldsymbol{b}$ which does not intersect $R_{\epsilon}$. If each such $b_j$ were parallel to an arc $\widetilde{b}_j$, then  $\cup_{i=1}^r \widetilde{b}_i$ would again cut off a subsurface which does not intersect $R_{\epsilon}$. Therefore we may assume that there exists a $b_j$ which is not parallel to any $\widetilde{b}_l$. Now we can slide $b_j$ over all the arcs $b_l$ and $b_l^{-1}$ in $C_1$ until it becomes parallel to $\widetilde{b}_1$. This way we associated an arc which was not parallel to any $\widetilde{b}_l$ before to $\widetilde{b}_1$ not parallel to any $b_j$ or $b_j^{-1}$ before. Repeating this, we can find a parallel pair for each $\widetilde{b}_i$ in $C$.
\end{proof}

\begin{proposition}\label{prop:invt-slide} 
Let $\boldsymbol{b} = \{b_1,b_2, \dots, b_r\} \rightarrow  \{b_1 + b_2,b_2 \dots, b_r \} = \boldsymbol{b'}$ be an arcslide among the cutting arcs on $P_{\epsilon}$. If $\HD$ and $\HD'$ are the resulting bordered sutured Heegaard diagams, then the resulting contact elements $\x_A \in \bsahat(\HD)$ and $\x_A' \in \bsahat(\HD')$ are equivalent in the sense of Section~\ref{sssec:bs}, and likewise for $\x_D$ and $\x_D'$. \end{proposition}

\begin{proof} 
An arcslide induces a pair of handleslides, one among the $\alpha$-circles followed by one among their pushoffs, i.e. the $\beta$-circles. The induced maps on $\bsdhat$ and on $\bsahat$  count holomorphic triangles that do not interact with the boundary of the Heegaard diagram. Thus, the proof is a straightforward adaptation of \cite[Lemma 3.5]{HKM09_HF}.  
 \end{proof}

We conclude the following.

\begin{proposition} \label{prop:cutting} Let ${\boldsymbol b} = \{b_1, \dots, b_r\}$ and $\widetilde{\boldsymbol b} = \{\widetilde{b}_1, \dots, \widetilde{b}_r\}$ be two possible choices of cutting arcs made in the  construction of the bordered Heegaard diagram associated to the sufficiently stabilized open book $(\{S_i\},h,\{\gamma_i\})$, and let $\HD$ and $\widetilde{\HD}$ be the resulting bordered sutured Heegaard diagrams. Then the resulting contact elements $\x_A \in \bsahat(\HD)$ and $\x_A' \in \bsahat(\HD')$ are equivalent in the sense of Section~\ref{sssec:bs}, and likewise for $\x_D$ and $\x_D'$. \end{proposition}
\begin{proof}
This follows immediately from Propositions~\ref{prop:invt-cut} and \ref{prop:invt-slide}.
\end{proof}

\begin{proposition} \label{prop:stab-invariance} Let $(\{S_i'\},h',\{{\gamma_i'}^\pm\})$ be the open book resulting from stabilization of the open book $(\{S_i\},h,\{\gamma_i^\pm\})$ on the $S_0$ page, and let $\HD$ and $\HD'$ be the resulting bordered sutured Heegaard diagrams. The resulting contact elements $\x_A \in \bsahat(\HD)$ and $\x_A' \in \bsahat(\HD')$ are equivalent in the sense of Section~\ref{sssec:bs}, and likewise for type $D$.
\end{proposition}

\begin{proof}
Let $\HH = (\Sigma, {\alphas}, {\betas},\zz)$ be a bordered sutured Heegaard diagram adapted to $(\{S_i\},h, \{\gamma_i^\pm\})$. We take  $(\{S_i'\},h', \{{\gamma_i'}^\pm\})$ to be the result of stabilizing $(\{S_i\},h, \{\gamma_i^\pm\})$ along a curve $\gamma\subset S_0$. Recall that  if we denote by $\overline{h}$ the extension of $h$ to the added 1-handle by the identity, then $h'=\tau \circ \overline{h}$, where $\tau$ is a positive Dehn twist along the circle $c$ formed by $\gamma$ and the core of the attached 1-handle. This means that  we can obtain from $\HH$ a diagram $\HH' =  (\Sigma', {\alphas'}, {\betas'},\zz)$ adapted to $(\{S_i'\},h', \{{\gamma_i'}^\pm\})$ as follows. Add a 1-handle to the two pages $S_{\epsilon}$ and $-S_0$ that form the Heegaard surface in order to obtain $\Sigma' = S_{\epsilon}'\cup -S_0'$. Let $b'$ be the cocore of the 1-handle on $S_{\epsilon}'$, and let $a'$ be a perturbation of the new cutting arc $b'$ to the left. Let 
   \[\beta'=b'\cup -h'\circ \iota(b')\subset S_{\epsilon}'\cup_{B}-S_{0}',\]
   and let
   \[ \alpha' = a'\cup -a' \subset S_{\epsilon}'\cup_{B}-S_{0}'.\]
 Last, modify all $\beta$ curves that intersect $\gamma$  by applying to them a positive Dehn twist along $c$, see Figure~\ref{fig:hd-stab-1}(b).
 Clearly $\HH'$ is adapted to the stabilized open book $(\{S_i'\},h', \{{\gamma_i'}^\pm\})$. 

Figures~\ref{fig:hd-stab-1}--\ref{fig:hd-stab-3}  demonstrate a sequence of Heegaard moves that transforms $\HH'$ into a diagram $\HH''$ adapted to a foliated open book $(\{S_i\},h'', \{\gamma_i^\pm\})$ whose monodromy $h''$ is isotopic to $h$. 

In particular, in Figure \ref{fig:hd-stab-1}(a) we see the original neighborhood of $\HH'$ where we intend to stabilize, with the stabilization arc drawn in (dashed) grey, and in Figure \ref{fig:hd-stab-1}(b) we see the result of the stabilization, with new curves $\alpha'$ and $\beta'$. Observe that each of $\alpha'$ and $\beta'$ may intersect some number of $\alpha$  and $\beta$ curves on $S_0$. We handleslide every $\beta$ curve other than $\beta'$ that intersects $\alpha'$ over $\beta'$ to obtain Figure \ref{fig:hd-stab-2}(a), in which we have eliminated all intersection points between $\alpha'$ and any $\beta$ curve other than $\beta'$. After a small isotopy of the diagram to Figure \ref{fig:hd-stab-2}(b), we then handleslide all $\alpha$ circles other than $\alpha'$ which intersect $\beta'$ over the curve $\alpha'$. The result of these handleslides, shown in Figure \ref{fig:hd-stab-2}(c), is a diagram in which $\alpha'$ and $\beta'$ intersect once and there is a punctured torus neighborhood of $\alpha'\cup\beta'$ which intersects no other $\alpha$ or $\beta$ curves. We excise this punctured torus, obtaining Figure \ref{fig:hd-stab-3}(a), after which we may isotope the surface to obtain Figure \ref{fig:hd-stab-3}(b); trading two bigons between $S_0$ and $S_{\epsilon}$ produces the diagram $\HH''$ shown in Figure \ref{fig:hd-stab-3}(c), whose monodromy $h''$ is isotopic to $h'$.

We will show that there is a type $D$ (resp.~type $A$) homotopy equivalence induced by the sequence of moves which carries the contact class from $\HH'$ to the contact class from $\HH''$. 
\\

\begin{figure}[h] 
\begin{center}
\labellist
	\pinlabel $\rm{(a)}$ at 30 -17
	\pinlabel $\rm{(b)}$ at 230 -17
\endlabellist
\includegraphics[scale=0.7]{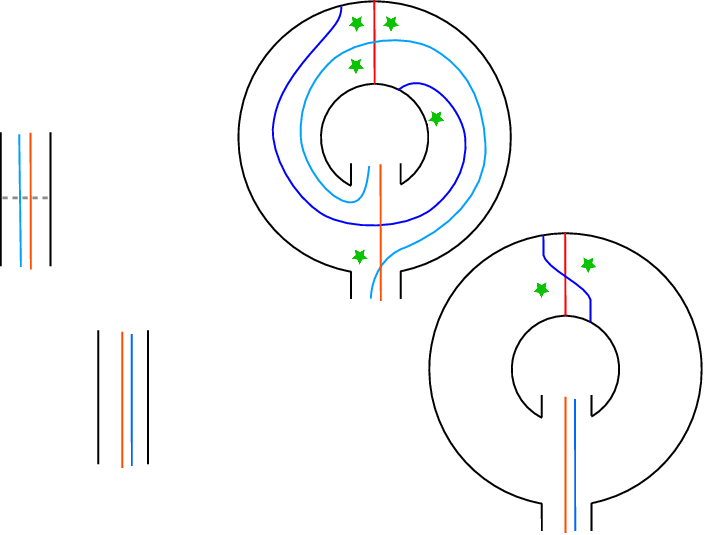}
\vspace{.7cm}
\caption{(a) The two neighborhoods on $S_{\epsilon}$ and $S_0$ of the stabilization arc, viewed on the Heegaard diagram $\HD$.  The stabilization arc is drawn as a grey dashed arc on $S_0$. The red and blue vertical lines on each neighborhood represent a (possibly) more general sequence of $\alpha$ and $\beta$ curves. (b) The Heegaard diagram $\HD'$ corresponding to the stabilized open book, where the same choice of cutting arcs is used away from the stabilization region and the cocore of the stabilizing $1$-handle is used as the final cutting arc; the diagram  $\HD'$ has one new $\alpha$ circle denoted $\alpha'$ and one new $\beta$ circle, denoted $\beta'$. 
In this and all subsequent figure in this section, black lines are identified via translation in the plane, being part of the binding. Green stars mark regions that intersect $\bdy \Sigma\setminus Z$; to see why the marked regions indeed intersect the  $\partial \Sigma\setminus Z$ part of $\partial \Sigma$ nontrivially, simply note that the stabilization arc has endpoints on $\partial \Sigma \setminus Z$.}
\label{fig:hd-stab-1}
\end{center}
\end{figure} 
\vspace{0.2cm}

\begin{figure}[h] 
\begin{center}
\labellist
	\pinlabel $\rm{(a)}$ at 120 -17
	\pinlabel $\rm{(b)}$ at 340 -17
	\pinlabel $\rm{(c)}$ at 580 -17
\endlabellist
\includegraphics[scale=0.7]{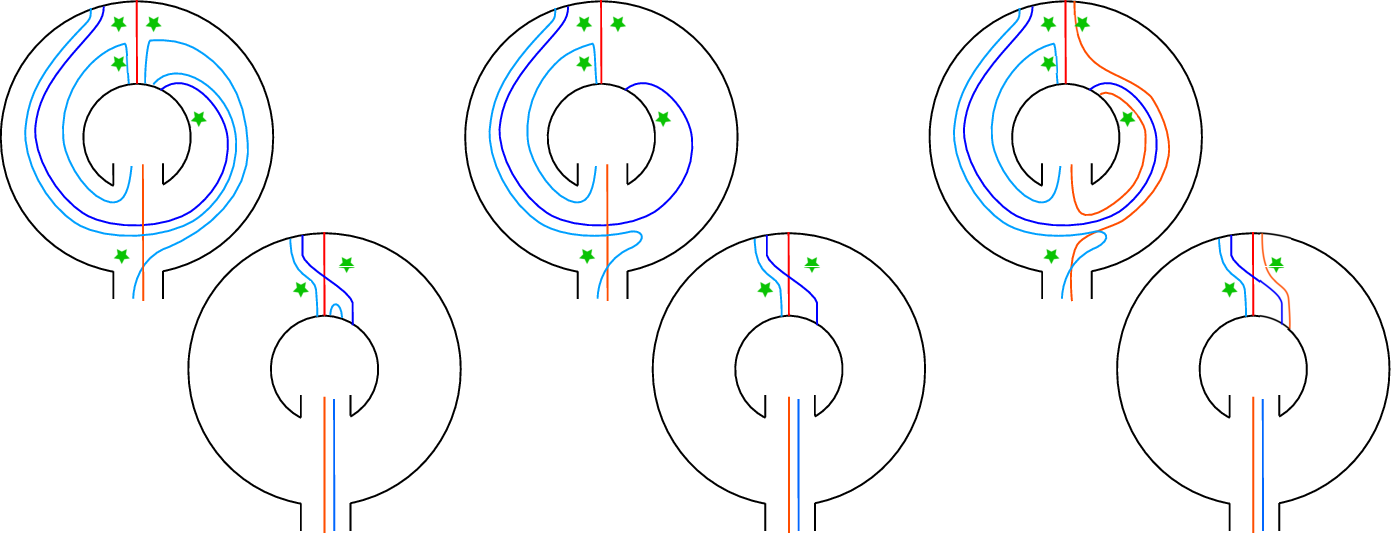}
\vspace{.7cm}
\caption{(a) The diagram after sliding all $\beta$ curves that intersect $\alpha'$ over $\beta'$  to eliminate the intersection points. (b) An isotopy of the previous figure. (c) The result of sliding all $\alpha$ circles that intersect $\beta'$ over $\alpha'$, so that the new neighborhood of $\alpha'\cup \beta'$ is a punctured torus with a single intersection point of $\alphas'\cap \betas'$.}
\label{fig:hd-stab-2}
\end{center}
\end{figure}

\begin{figure}[h] 
\begin{center}
\labellist
	\pinlabel $\rm{(a)}$ at 110 -17
	\pinlabel $\rm{(b)}$ at 310 -17
	\pinlabel $\rm{(c)}$ at 470 -17
\endlabellist
\includegraphics[scale=0.7]{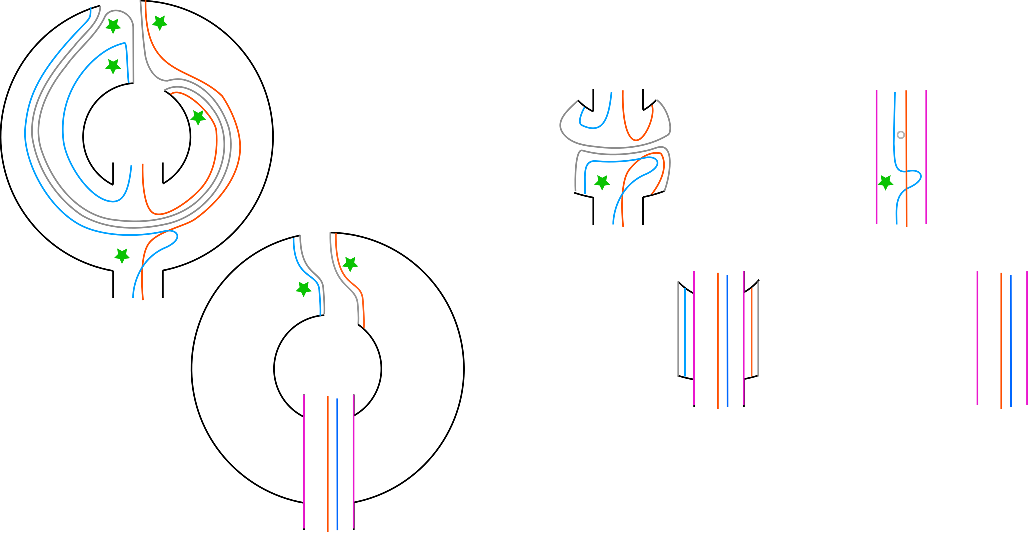}
\vspace{.7cm}
\caption{(a) The result of excising the punctured torus, along with an isotopy of the curve that represents the binding ``trading" two bigons from $S_{\epsilon}$ to $S_0$; the grey lines are the boundary of the punctured torus; the new binding is marked in purple. (b) An isotopy of the previous figure, again with the gray lines as the boundary of a puncture. (c) An isotopy of the previous figure, divided by page according to the new binding.}
\label{fig:hd-stab-3}
\end{center}
\end{figure}

Let $\x_D\in \bsdhat(\overline{\HH})$ be the generator corresponding to the set of intersection points $\x$, as defined in Section~\ref{ssec:bordereddiagram}. Let $x'$ be the unique intersection point in $\alpha'\cap \beta'$ on $\HH'$. Observe that the diagram $\HH'$ was obtained by modifying $\HH$ away from $\x$, and that $\x'\coloneqq \x\cup \{x'\}$ is the special generator for $\HH'$ coming from the construction in Section~\ref{ssec:bordereddiagram}. Let $\x_D'$ be the generator in  $\bsdhat(\overline{\HH'})$ corresponding to $\x'$. Last, let $\x''$ be the special generator on $\HH''$ and $\x''_D$ be the corresponding generator in $\bsdhat(\overline{\HH''})$.

\begin{figure}[h]
\begin{center}
\labellist
  	\pinlabel $\textcolor{black}{\scriptstyle 13}$ at 940 350
	\pinlabel $\textcolor{black}{\scriptstyle 14}$ at 980 350
	\pinlabel $\textcolor{black}{\scriptstyle 15}$ at 940 290
	\pinlabel $\textcolor{black}{\scriptstyle 16}$ at 980 290
  	\pinlabel $\textcolor{black}{\scriptstyle 1}$ at 830 368
	\pinlabel $\textcolor{black}{\scriptstyle 2}$ at 870 368
  	\pinlabel $\textcolor{black}{\scriptstyle 3}$ at 790 327
	\pinlabel $\textcolor{black}{\scriptstyle 4}$ at 830 327
	\pinlabel $\textcolor{black}{\scriptstyle 5}$ at 790 290
	\pinlabel $\textcolor{black}{\scriptstyle 18}$ at 830 290
	\pinlabel $\textcolor{black}{\scriptstyle 6}$ at 755 180
         \pinlabel $\textcolor{black}{\scriptstyle d_0}$ at 888 204
         \pinlabel $\textcolor{black}{\scriptstyle d_1}$ at 910 204
         \pinlabel $\textcolor{black}{\scriptstyle d_2}$ at 934 204
         \pinlabel $\textcolor{black}{\scriptstyle d_3}$ at 958 204
         \pinlabel $\textcolor{black}{\scriptstyle d_4}$ at 985 204
         \pinlabel $\textcolor{black}{\scriptstyle 7}$ at 785 178
          \pinlabel $\textcolor{black}{\scriptstyle 11}$ at 784 50
           \pinlabel $\textcolor{black}{\scriptstyle 17}$ at 753 50
         \pinlabel $\textcolor{black}{\scriptstyle 8}$ at 1022 153
         \pinlabel $\textcolor{black}{\scriptstyle 9}$ at 783 128         
         \pinlabel $\textcolor{black}{\scriptstyle 10}$ at 782 99
         \pinlabel $\textcolor{black}{\scriptstyle 12}$ at 1022 70
\endlabellist
\includegraphics[scale=0.455]{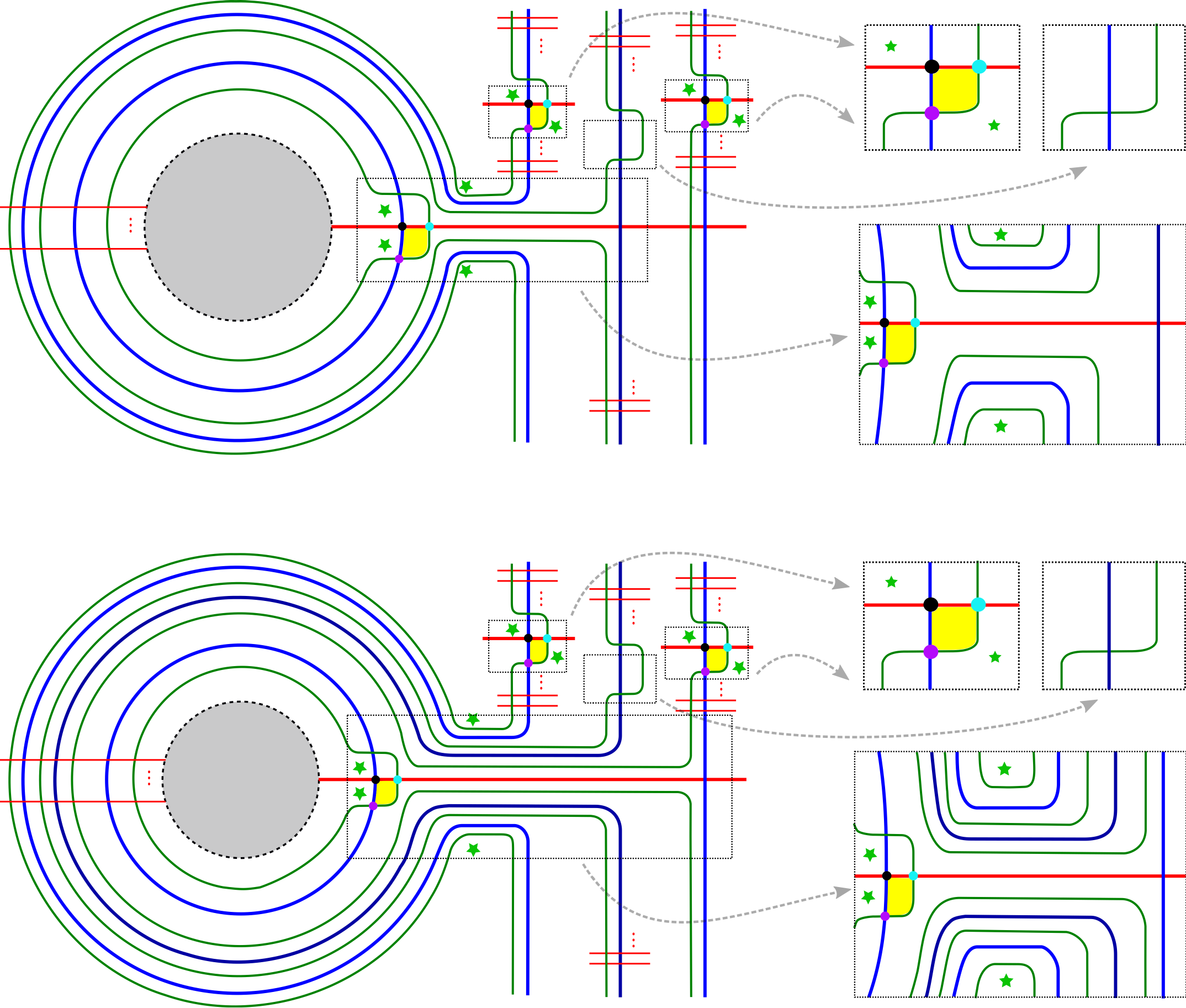}
\end{center}

\caption{Top: The case of sliding a curve that does not contain an intersection point in $\x$, i.e.\ some $\beta^-_j$. Bottom: The case of sliding a curve that contains an intersection point in $\x$.}
\label{fig:triple-seq}
\end{figure}

 Let $\x_0 = \x_D'$, and let $\betas_0 = \betas'$. Consider the bordered Heegaard triple $\mathcal H_i=(\Sigma,\betas_{i-1}, \betas_{i},\alphas', \zz)$ corresponding to the $i$th handleslide in the sequence of handleslides making $\alpha'$ disjoint from every $\beta$ curve except $\beta'$. Figure \ref{fig:triple-seq} illustrates the diagram in a neighborhood of the union of $\alpha'$, $\beta'$, and the $\beta$-circles that intersect $\alpha'$. Let $F_i$ be the map associated to the $i$th handleslide defined by counting holomorphic triangles with Maslov index zero. Then, the chain map associated with the handleslides is 
\[f_i:=F_i(\cdot,\Theta):\bsdhat(\Sigma,\betas_{i-1},\alphas', \zz)\to \bsdhat(\Sigma,\betas_{i},\alphas', \zz)\]
where $\Theta$ represents the top generator in $(\Sigma,\betas_{i-1},\betas_{i}, \zz)$. 
We will argue that $f_1$ carries $\x_0$ to a single generator $\x_1$ in $ \bsdhat(\Sigma,\betas_1,\alphas', \zz)$, and inductively, that each map $f_i$ carries the generator $\x_{i-1}$ to a generator $\x_i$.

Assume $\Delta\in\pi_2(\x_{i-1},\Theta,\y)$ is a Whitney triangle that contributes nontrivially to $f_i(\x_{i-1})$. Let $n_j$ denote the coefficient of $\Delta$ in the region labelled with $j$, and recall that $n_j \geq 0$ for all $j$.

First, consider a neighborhood of an intersection point in $\x_{i-1}$ on one of the $\beta$ curves other than $\beta'$. This situation is illustrated in the top left rectangle in the bottom image of Figure \ref{fig:triple-seq}. Because the black and pink intersection points must both appear as corners in the projection of $\Delta$ to $\mathcal H_i$, we have
\[n_1 + n_3 + 1 = n_4 \quad\quad n_4+n_5=n_3+1,\]
so taking the sum of the two equations and cancelling terms, we see that $n_1+n_5=0$, implying that $n_1=n_5=0$. Thus we see $n_4=n_3+1$, so in particular $n_4\geq 1$. But the multiplicity of the intersection point marked in turquoise in $\y$ is $n_4+n_2- n_1 = n_4+n_2 \leq 1$, so $n_4=1$ and $n_2=0$.  

For the central rectangle, we first observe that $n_7 = n_{11}$ and $n_6=n_{17}$; otherwise, some corner of the projection of $\Delta$ to $\mathcal H_i$ lies on the intersection of $\beta'$ with an $\alpha$ curve other than $\alpha'$, which cannot occur since $\x_{i-1}$ contains an intersection point between $\beta'$ and $\alpha'$. Next, $n_{d_0}=0$ since the region marked with $d_0$ intersects $\bdy\Sigma\setminus Z$ nontrivially. We claim that if $n_{d_{j-1}}=0$ then $n_{d_j}=0$. For consider the green or blue curve separating $d_{j-1}$ or $d_j$. If it is part of a pair of green and blue curves containing an intersection point of a generator that moves under the cobordism map then either $n_5=0$ or $n_{18}=0$ (because the region labelled $18$  intersects $\bdy\Sigma\setminus Z$ nontrivially) can be used to deduce that if $n_{d_{j-1}}=0$ then so is $n_{d_j}$. If, however, the curve separating $d_{j-1}$ or $d_j$ is part of a pair of green and blue curves without an intersection point of a generator that moves under the cobordism map, then the fact that $n_{13}=n_{14}=n_{15}=n_{16}$ shows that if $n_{d_{j-1}}=0$ then $n_{d_j}=0$. So all $n_{d_j}=0$, and by the same logic $n_7=n_{11}=0$.

Now we see that 
\[n_{10} = n_9 +1 \qquad \qquad n_{10}+n_6=1 \qquad \qquad n_9+n_6=0,\]
 implying that $n_6=n_9=0$ and $n_{10}=1$. Hence the domain of $\Delta$ is the disjoint union of the small yellow triangles, and $\y$ is the intersection point $\x_i$ shown in turquoise in Figure \ref{fig:triple-seq}. Consequently, for an appropriate choice of the complex structure $f_i(\x_{i-1})=\x_i$. So the map induced by the $i$th handleslide among the $\beta$ curves takes each intersection point between $\betas_{i-1}$ and $\alphas$ in $\x_{i-1}$ to the nearest intersection point between $\betas_{i}$ and $\alphas'$. 
 
The argument for handleslides among the $\alpha$ curves is analogous. Let $\z$ be the generator that $\x'$ is carried to by the sequence of $\beta$ and $\alpha$ handleslides. Next, destabilizing along a neighborhood of $\alpha'\cup \beta'$ results in a Heegaard diagram $\HH''$ adapted to a foliated open book $(\{S_i\},h'', \{\gamma_i^\pm\})$ whose monodromy $h''$ is isotopic to $h$, as in Figure~\ref{fig:hd-stab-3}(c). It is clear that the destabilization map sends $\z$ to $\x_D''$.  
By Proposition ~\ref{prop:homot}, $\x_D''$ is equivalent to $\x_D$, the generator for the original Heegaard diagram. 

The argument for type $A$ is similar. The $i$th handleslide induces maps \[(f_i)_j \colon\bsahat(\Sigma,\betas_{i-1},\alphas', \overline{\zz}) \otimes \mathcal A(\overline{\zz})^{\otimes(j-1)}\to \bsahat(\Sigma,\betas_{i},\alphas', \overline{\zz})\] \noindent indexed by $j$; each map $(f_i)_1$ carries the generator $\x_{i-1}$ to the generator $\x_i$, and whenever $j>1$, we have $(f_i)_j(\x_{i-1}, a_1, \ldots, a_{j-1})=0$ for any $a_1, \ldots, a_j\in A(\overline{\zz})$.
\end{proof}

\begin{proof}[Proof of Theorem \ref{thm:ca-cd}] Proposition \ref{prop:homot} shows that isotopy of the monodromy $h$ induces maps of bordered sutured Floer modules carrying the distinguished generator to the distinguished generator, and therefore that isotopy of the monodromy preserves the homotopy equivalence class of $\xxx_D$ in $\bsdhat(-M,\Gamma,\overline{\zz})$. Similarly, the discussion at the start of this section shows that changing the choice of almost complex structure induces maps of bordered sutured Floer modules which are identity on the special generators, hence preserve the distinguished generator $\xx_D$. Proposition \ref{prop:stab-invariance} shows that stabilization on the $S_0$ page of the open book $(\{S_i\}, h, \{\gamma_i\})$ induces maps of bordered sutured Floer modules carrying the distinguished generator to the distinguished generator. By Theorem~\ref{thm:revsortedsupport}, these stabilizations suffice. Proposition \ref{prop:cutting} shows that for a sufficiently-stabilized open book, varying the choice of cutting arcs also induces maps of bordered sutured Floer modules carrying the distinguished generator to the distinguished generator. 

The argument for type A proceeds similarly. \end{proof}

Thus, we may finally define our contact invariants.  

\begin{definition}\label{def:cxi}
Let $c_D(M,\xi, \fol)$ be the type $D$ homotopy equivalence class (in the sense of Section \ref{sssec:bs}) of $\xxx_D$ in $\bsdhat(-M,\Gamma,\overline{\zz})$. Similarly, let $c_A(M,\xi, \fol)$ be the type $A$ homotopy equivalence class of $\xxx_A$ in $\bsahat(-M,\Gamma,\overline{\zz})$. 
\end{definition}


\section{Gluing}
\label{sec:gluing}

In this section, we show that after gluing, the tensor product of the classes from Definition~\ref{def:cxi}  recovers the Ozsv\'ath--Szab\'o contact class.

Suppose that $(M^L,\xi^L,\fol^L)$ and $(M^R,\xi^R,\fol^R)$ are two foliated  contact three-manifolds and that $\psi\colon \partial M^L  \to \partial M^R$ is an orientation-reversing diffeomorphism that takes the reverse of $\fol^L$ to $\fol^R$. Let $(M,\xi)=(M^L\cup_\psi M^R,\xi^L\cup_\psi\xi^R)$. 
If we glue the corresponding bordered sutured three-manifolds  $(M^L,\bsGamma^L,\mathcal{F}^L)$ and $(M^R,\bsGamma^R,\mathcal{F}^R)$, then we obtain a sutured three-manifold $(M(|\Gamma|),\Gamma)=(M^L\cup_{F(\zz)}M^R,\bsGamma^L\cup\bsGamma^R)$, where $M(|\Gamma|)$ denotes $M$ with $|\Gamma|$  balls removed. The associated  chain complex computes multipointed Heegaard Floer homology.  It follows that $\sfh(-M(|\Gamma|),\Gamma)\cong \hfhat(-M)\otimes H_*(T^{|\Gamma|-1})$.
The gluing statement is approximately the following: under the map  
\[\bsahat(-M^L,\bsGamma^L,\overline{\zz})\boxtimes_{\mathcal{A}(\overline{\zz})}\bsdhat(-M^R,\bsGamma^R, -\overline{\zz})\to \sfc(-M(|\Gamma|),\Gamma),\]
the box tensor product of the bordered contact invariants $(M^L,\xi^L)$ and $(M^R,\xi_R)$ maps to the generator corresponding to the multipointed contact invariant of $(M,\xi)$. In order to state the theorem precisely, we must fix the respective Heegaard diagrams, as we describe next.

We begin with sorted abstract foliated open books for the two foliated contact three-manifolds. We will show that gluing two  bordered sutured Heegaard diagrams adapted to these open books yields a multipointed Heegaard diagram adapted to an open book that supports $(M, \xi)$. 

In fact, we can do two different constructions. In the  first construction, we glue the two foliated open books as described in Section \ref{ssec:gluing} to obtain an open book $(S,h)$ for $(M,\xi)$. We then  construct an adapted (multipointed) Heegaard diagram $\HH$  with a distinguished generator $\xxx$  for $c(\xi)\otimes \theta^{|\bsGamma|-1}$. 
In the second construction, we begin with  bordered sutured Heegaard diagrams adapted to the foliated contact three-manifolds constructed as in Section~\ref{ssec:bordereddiagram}; these have distinguished generators $\xxx^L$ and $\xxx^R$ which give $c_D(\xi^L)$ and $c_A(\xi^R)$, respectively. We then glue the diagrams together to obtain a sutured Heegaard diagram  $\HH'$ for $(-M(|\Gamma|),\Gamma)$ with the distinguished generator $\xxx' \coloneqq \xxx^L\boxtimes \xxx^R$ representing $c_D(\xi^L)\boxtimes c_A(\xi^R)$.
 Each of these constructions involves some choice of cutting arcs. We will start with the latter construction and show that the obtained sutured Heegaard diagram for $(-M(|\Gamma|),\Gamma)$ --- interpreted as a multipointed Heegaard diagram for $M$, as in the end of Section \ref{sssec:hfc} --- is the Heegaard diagram coming from the former construction.

  Let $(\{S_{i}^L\}_{i=0}^{2k},h^L,\{\gamma_i^{\pm,L}\})$ and $(\{S_{i}^R\}_{i=0}^{2k},h^R,\{\gamma_i^{\pm,R}\})$ be sorted abstract foliated open books for  the triples $(M^L,\xi^L,\fol^L)$, and  $(M^R,\xi^R,\fol^R)$, respectively. Let $\HH^L = (\Sigma^L, \betas^L,\alphas^L, \zz^L)$ and $\HH^R = (\Sigma^R, \betas^R,\alphas^R, \zz^R)$ be Heegaard diagrams adapted to these foliated open books. Since $\fol^L$ is the reverse of  $\fol^R$, we can identify  $\zz^L$ with $-\zz^R$.\footnote{For ease of exposition, we assume that the intervals $I_{\pm}^L$ on $A_{\epsilon}^L$ and $I_{\mp}^R$ on $A_{\epsilon}^R$ are chosen so they match up after gluing, which can always be obtained up to isotopy.}  We can therefore glue the two diagrams along  $\zz^L = -\zz^R$ to obtain a sutured diagram $\HH' = \HH^L\cup \HH^R$. More precisely,  since $E_\pm^L=E_\mp^R$ and $H_\pm^L=H_\mp^R$, the endpoints of the two arcs $\beta_i^{a,L}$ and $\beta_i^{a,R}$ may be isotoped to match up on $A_{\epsilon}^L=-A_{\epsilon}^R$ for each $i\in H_\pm^L=H_\mp^R$.  This yields  the curves $\beta_i^a=\beta_i^{a,L}\cup \beta_i^{a,R}$ on the surface $\Sigma'=\Sigma^L\cup_{\mathcal{Z}_L=-\mathcal{Z}_R} \Sigma^R$. See Figure \ref{fig:gluedHD-new-2}.  
 
\begin{figure}[h!]
\begin{center}
\labellist
  	\pinlabel $\textcolor{red}{\tilde a_{j_1}^{R}}$ at 230 373
	\pinlabel $\rotatebox{70}{\textcolor{red}{\dots}}$ at 228 355
	\pinlabel $\textcolor{red}{\tilde a_{j_t}^{R}}$ at 220 334
	\pinlabel $\rotatebox{70}{\dots}$ at 69 323
	\pinlabel $\rotatebox{70}{\dots}$ at 208 376
	\pinlabel $S_{\epsilon}^L$ at -15 351
	\pinlabel $-S_{0}^L$ at -15 246
	\pinlabel $A_{\epsilon}^L$ at 130 381
	\pinlabel $A_{\epsilon}^R$ at 149 381
	\pinlabel $\textcolor{red}{-a^L}$ at 51 434
	\pinlabel $\textcolor{red}{a^L}$ at 9 378
	\pinlabel $\textcolor{blue}{b^L}$ at 37 378
	\pinlabel $\textcolor{blue}{-h^L\circ \iota(b^L)}$ at -14 451
	\pinlabel $\textcolor{blue}{-h^L(\beta_{j_1}^{-,L})}$ at 56 460
	\pinlabel $\rotatebox{50}{\textcolor{blue}{\dots}}$ at 89 472
	\pinlabel $\textcolor{blue}{-h^L(\beta_{j_t}^{-,L})}$ at 107 481
	\pinlabel $\textcolor{blue}{\beta_{i_1}^{+,L}}$ at 30 336
	\pinlabel $\rotatebox{70}{\textcolor{blue}{\dots}}$ at 14 316
	\pinlabel $\textcolor{blue}{\beta_{i_s}^{+,L}}$ at 15 291
	\pinlabel $\textcolor{red}{\tilde a_{i_1}^{L}}$ at 55 357
	\pinlabel $\rotatebox{70}{\textcolor{red}{\dots}}$ at 48 336
	\pinlabel $\textcolor{red}{\tilde a_{i_s}^{L}}$ at 38 321
	\pinlabel $\textcolor{blue}{\beta_{j_t}^{+,R}}$ at 265 399
	\pinlabel $\rotatebox{70}{\textcolor{blue}{\dots}}$ at 254 376
	\pinlabel $\textcolor{blue}{\beta_{j_1}^{+,R}}$ at 247 353
	\pinlabel $\textcolor{red}{-a^R}$ at 228 261
	\pinlabel $\textcolor{red}{a^R}$ at 270 306
	\pinlabel $\textcolor{blue}{b^R}$ at 243 306
	\pinlabel $\textcolor{blue}{-h^L\circ \iota(b^R)}$ at 292 246
	\pinlabel $e_-$ at 130 421
	\pinlabel $e_+$ at 130 271
	\pinlabel $-A_0^L$ at 125 211
	\pinlabel $-A_0^R$ at 155 211
	\pinlabel $e_+$ at 150 421
	\pinlabel $e_-$ at 150 271
	\pinlabel $S_{\epsilon}^R$ at 290 351
	\pinlabel $-S_{0}^R$ at 291 226
	\pinlabel $e_+$ at 129 38
	\pinlabel $e_-$ at 129 109
	\pinlabel $\dots$ at 140 15
	\pinlabel $\dots$ at 140 130
	\pinlabel $\delta_{i_1}$ at 81 13
	\pinlabel $\delta_{i_s}$ at 197 13
	\pinlabel $\delta_{j_1}$ at 81 137
	\pinlabel $\delta_{j_t}$ at 197 137
\endlabellist
\includegraphics[scale=.95]{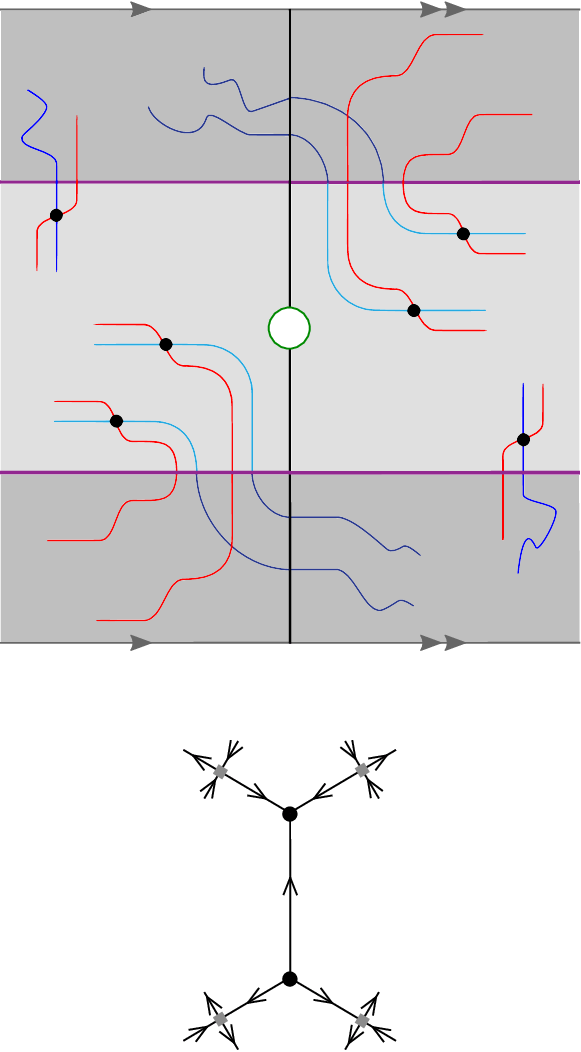}
\end{center}
\caption{Top: Two bordered sutured Heegaard diagrams with compatible boundary, glued together. We have $i_1<\cdots<i_s$ and $\{i_1, \ldots, i_s\}\in H_+^L = H_-^R$; similarly, $j_1<\cdots<j_t$ and $\{j_1, \ldots, j_t\}\in H_-^L = H_+^R$. Recall that a $\beta$-arc $\beta_i^{a, \bullet}$ corresponding to a hyperbolic point in $H_{\pm}^{\bullet}$ is also denoted by $\beta_i^{\pm, \bullet}$, to help read the diagram. Cutting arcs $b_j^L$ and $b_j^R$, as well as their perturbations $a_j^L$ and $a_j^R$, are labelled without subscripts, again for ease for reading. Bottom: The corresponding foliation on the left boundary. Only an interval of $A_0$ and the relevant nearby separatrices are shown.} 
\label{fig:gluedHD-new-2}
\end{figure}

  As a next step we cap all boundary component of $\Sigma'$ with disks and place a basepoint in the middle of each disk. We  thus obtain a multipointed Heegaard diagram $\HH$ for $M$.  We could equivalently have obtained $\HH$ from $\HH^L$ and $\HH^R$ by gluing $\Sigma^L$ to $\Sigma^R$ along the entire $A_\epsilon^L$ and $A_\epsilon^R$ and placing a basepoint in the middle of each component of  $A_\epsilon^L=-A_\epsilon^R$.
Now we are ready to state the precise gluing statement:

\begin{theorem}[Gluing] \label{thm:gluing} As above, let $(M, \xi)$ denote the manifold constructed by gluing foliated open books along their compatible boundaries, and let $\HH$ be the multipointed Heegaard diagram formed by capping the discs on the resulting  sutured Heegaard diagram $\HH'$.  Then  $\HH$ is adapted to an open book $(S,h)$ for $(M, \xi)$ and the box tensor product $\x_A\boxtimes \x_D$  agrees with the generator $\mathbf{x}$ representing the multipointed contact invariant for $(M,\xi)$.
\end{theorem}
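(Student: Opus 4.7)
The plan is to argue in two stages: first, that the glued diagram $\HH$ is (a multipointed version of) the standard Ozsv\'ath--Szab\'o/Honda--Kazez--Mati\'c open book Heegaard diagram for $(M,\xi)$ built from the open book $(S,h)$ produced by Theorem~\ref{prop:glue}; and second, that the generator $\xxx^L\cup \xxx^R$ (which, in view of the matching idempotents, is exactly the box tensor product $\xxx_A\boxtimes\xxx_D$) is the distinguished generator detecting the multipointed contact invariant on $\HH$. By Proposition~\ref{lem:multicontact} and the discussion at the end of Section~\ref{sssec:hfc}, this generator represents $c(\xi)\otimes\theta^{\otimes(|\bsGamma|-1)}$, and via the chain homotopy equivalence between the glued sutured complex and the multipointed Heegaard Floer complex, translating this statement back through the bordered sutured pairing theorem of Zarev recovers the contact invariant as claimed.

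For the first stage, I would begin by identifying the Heegaard surface: after capping the boundary circles $\bsGamma$ with disks, $\Sigma=\Sigma^L\cup_{\zz^L=-\zz^R}\Sigma^R$ with basepoints becomes $S_\epsilon\cup_B -S_0$ where $S=S_0^L\cup_{A_0^L=-A_0^R}S_0^R$ is the page of the glued open book. The $\alpha$-circles are unchanged by the gluing and are already of the form $a\cup-a$ for cutting arcs $a$ on $S_\epsilon\subset S$. The key step is to match the $\beta$-circles with arcs of the form $b\cup -h(b)$ where $h$ is the glued monodromy. The $\beta$-circles coming from $\betas^{c,L}\cup\betas^{c,R}$ already have this form page-wise. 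For an index $i\in H_+^L=H_-^R$, the arc $\beta_i^{+,L}\subset \Sigma^L$ (which is, up to isotopy, $\gamma_i^{+,L}\subset S_\epsilon^L$) glues to $\beta_i^{-,R}=-h^R(\gamma_i^{-,R})\subset -S_0^R$. I would then define $b_i\subset S$ to be the arc obtained by extending $\gamma_i^{+,L}$ into $S_0^R$ using the identification $\iota$ from Section~\ref{ssec:gluing} (which identifies $\gamma_i^{+,L}$ with a cutting arc for $S_{2k}\cong S_0$ under the sorted open book structure), and verify directly from the description $h=(h^L\cup h^R)\circ\iota$ that $h(b_i)$ restricts on $S_0^R$ to $h^R(\gamma_i^{-,R})$, matching the right-hand piece of the glued $\beta$-circle. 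Finally, I would check that the collection of all cutting arcs obtained this way cuts $S$ into exactly $|\bsGamma|$ disks, each containing one basepoint; this is a topological verification using the sufficient-stabilization hypothesis and Lemma~\ref{lem:cut-gamma} applied on each side.

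For the second stage, I first note that since $H_+^L=H_-^R$ and $H_-^L=H_+^R$, the idempotents satisfy $I_A(\xxx_A^R)=I(H_+^R)=I(H_-^L)=I_D(\xxx_D^L)$, so $\xxx_D^L\boxtimes\xxx_A^R$ is a single well-defined generator that on the glued diagram is exactly $\xxx^L\cup\xxx^R$. Every point of this set lies on $S_\epsilon^L$ or $S_\epsilon^R$, hence all on the $S_\epsilon$ side of the Heegaard surface. But in the HKM construction from an open book, the distinguished generator defining the multipointed contact invariant is precisely the unique tuple of intersection points fully supported on $S\times\{1/2\}=S_\epsilon$; the uniqueness follows from the thin-strip geometry of the perturbed cutting arcs $a$ and $b$, which guarantees exactly one intersection on $S_\epsilon$ per $\alpha$-circle. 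Combining this identification with Proposition~\ref{lem:multicontact} shows that $\xxx_A\boxtimes\xxx_D$ represents $c(\xi)\otimes\theta^{\otimes(|\bsGamma|-1)}$ in the glued complex, which is the precise content of the theorem.

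The principal obstacle is the first-stage bookkeeping: explicitly matching the glued $\beta$-arcs associated to hyperbolic points with arcs of the form $b\cup -h(b)$ for the correct choice of cutting arcs in $(S,h)$, and keeping track of the endpoint isotopies (encoded by the intervals $I_\pm$) so that the extension of $\gamma_i^{+,L}$ across the gluing interface agrees with $h^R(\gamma_i^{-,R})$ under the monodromy of the glued open book. Once this page-level picture is verified using the explicit description of $\iota$ from sorted foliated open books in Section~\ref{ssec:gluing}, the identification of generators is essentially automatic, because by construction both the HKM distinguished generator and the pairwise union $\xxx^L\cup\xxx^R$ are characterized as the unique tuple lying on the $S_\epsilon$ copy of the Heegaard surface.
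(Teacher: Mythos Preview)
Your proposal is correct and follows essentially the same route as the paper: first verify that the glued diagram is the HKM-style multipointed diagram for the glued open book $(S,h)$, then observe that the unique generator supported on $S_\epsilon$ is simultaneously $\xxx^L\cup\xxx^R$ and the multipointed contact generator. The paper organizes your first stage into three short propositions---(i) the restricted $\beta$-arcs cut $S'$ into disks each containing one basepoint, (ii) the $\boldsymbol{b}'$ arcs are standard perturbations of the $\boldsymbol{a}'$ arcs, and (iii) $h$ carries $b_i'$ to $b_i''$---of which you slightly underemphasize (ii); one minor correction is that the disk count in (i) does not require the sufficient-stabilization hypothesis, only the defining property of the cutting arcs from Section~\ref{ssec:bordereddiagram}.
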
  

Since our goal is to show that $\HH$ is a multipointed Heegaard diagram compatible with an open book for $(M, \xi)$, we will continue to keep track of the union $B  = B^L\cup B^R\subset \Sigma=\Sigma^L\cup_{A^L=- A^R} \Sigma^R$ of the two original bindings. Observe that $B$ splits $\Sigma$ into two ``pages", namely  $S' \coloneqq S_{\epsilon}^L\cup_{A_\epsilon^L=-A^\epsilon_R} S_{\epsilon}^R$, and $S'' \coloneqq S_0^L\cup_{A_0^L=-A^0_0} S_0^R$.

We will break the argument relating $\HH$ to an open book for $(M, \xi)$ into three propositions.
Consider the collections of arcs obtained by restricting the $\beta$ and $\alpha$ curves to $S'$ and denote these by ${\boldsymbol b}' = \{b_i'\}$ and ${\boldsymbol a}' = \{a_i'\}$, respectively. Denote the remaining parts of the $\beta$  and $\alpha$ curves by ${\boldsymbol b}''=\{b_i''\}$ and ${\boldsymbol a}''=\{a_i''\}$. 

\begin{proposition}\label{claim:discs}
The set of arcs ${\boldsymbol b}'$ cuts  $S'$ into $\frac{|E^L|}2=\frac{|E^R|}2$  disks, each of which contains exactly one basepoint. 
\end{proposition}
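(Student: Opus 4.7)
The plan is to first catalogue the arcs in $\boldsymbol{b}'$ and then argue that gluing two families of disks along a single boundary arc each yields the desired disk decomposition. From the construction in Section~\ref{ssec:bordereddiagram}, the $\beta$-curves on $\Sigma$ that have any piece on $S'$ are precisely the $\beta^c$-circles of each side and the glued circles arising from $\beta^a$-arcs whose $S_{\epsilon}$-part is nonempty. Since for $i\in H_-^L$ the arc $\beta_i^{a,L}=-h^L(\gamma_i^{-,L})$ lies entirely on $-S_0^L$, and similarly for $H_-^R$, the only contributions to $S'$ come from $\{b_i^{+,L}\}_{i\in H_+^L}$, ${\boldsymbol b}^L$, $\{b_j^{+,R}\}_{j\in H_+^R}$, and ${\boldsymbol b}^R$. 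Crucially, all of these arcs have endpoints on $B=B^L\cup B^R$ and none on $A_\epsilon^L=-A_\epsilon^R$, so the arcs of ${\boldsymbol b}'$ meet the gluing locus trivially.

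Next I would invoke the defining property of the cutting arcs: by construction, ${\boldsymbol b}^L$ was chosen so that each connected component of $S_\epsilon^L\setminus({\boldsymbol b}^L\cup\{b_i^{+,L}\}_{i\in H_+^L})$ is a disk containing exactly one interval of $A_\epsilon^L$ on its boundary, and symmetrically on the $R$-side. Thus $S_\epsilon^L$ decomposes into $|A_\epsilon^L|=|E^L|/2$ disks and $S_\epsilon^R$ into $|A_\epsilon^R|=|E^R|/2$ disks, with a canonical bijection between the $L$- and $R$-disks induced by the identification $A_\epsilon^L=-A_\epsilon^R$ (which respects intervals since $\fol^L$ is the reverse of $\fol^R$, and in particular $E_\pm^L=E_\mp^R$).

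Now I would glue. For each matched pair, the two disks meet along a single properly embedded arc (namely the shared interval of $A_\epsilon$) in their boundaries, and the union of two disks along a boundary arc is again a disk. Since the arcs of ${\boldsymbol b}'$ have no endpoints on $A_\epsilon^L=-A_\epsilon^R$, the decomposition does not acquire any extra cuts along the gluing locus. Thus $S'\setminus {\boldsymbol b}'$ is a disjoint union of $|E^L|/2$ disks, one for each interval of $A_\epsilon$. Recalling that the multipointed diagram $\HH$ places a basepoint on each component of $A_\epsilon^L=-A_\epsilon^R$, each resulting disk contains exactly one basepoint, completing the proof.

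I do not anticipate a serious obstacle; the only subtlety is bookkeeping the matching of intervals under the identification $\zz^L=-\zz^R$ and confirming that the endpoints of all arcs in ${\boldsymbol b}'$ avoid the gluing curves, which is immediate from the construction.
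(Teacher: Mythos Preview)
Your proof is correct and follows essentially the same approach as the paper: both identify ${\boldsymbol b}'$ as the union of $\{b_i^{+,L}\}_{i\in H_+^L}$, ${\boldsymbol b}^L$, $\{b_j^{+,R}\}_{j\in H_+^R}$, ${\boldsymbol b}^R$, invoke the defining property that each side of $S'$ is cut into disks each meeting $A_\epsilon^\bullet$ in a single interval, and then observe that the gluing along $A_\epsilon^L=-A_\epsilon^R$ pairs these disks bijectively into disks each containing one basepoint. Your explicit remark that the arcs of ${\boldsymbol b}'$ have endpoints only on $B$ (so the gluing locus introduces no extra cuts) makes a step slightly more transparent than in the paper, but the argument is the same.
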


\begin{proof}
Following the notation in Section~\ref{ssec:bordereddiagram}, we will write $b_i^{+,\bullet}$ for $\beta_i^{a, \bullet}\cap S_{\epsilon}^{\bullet}$ whenever $i\in H_+^{\bullet}$. Thus, the collection ${\boldsymbol b}'$ is exactly the union of the collections ${\boldsymbol b}^{+,L} = \{{b_i}^{+,L}\}_{i\in H_+^L}$, ${\boldsymbol b}^L = \{b_i^L\}$, ${\boldsymbol b}^{+,R} = \{{b_i}^{+,R}\}_{i\in H_+^R}$, and ${\boldsymbol b}^R=\{b_i^R\}$.

Observe that 
\[S'\setminus {\boldsymbol b}' = (S_{\epsilon}^L\setminus \big({\boldsymbol b}^L\cup {\boldsymbol b}^{+,L})\big)\cup_{A_{\epsilon}^L =- A_{\epsilon}^R}(S_{\epsilon}^R\setminus \big({\boldsymbol b}^R\cup {\boldsymbol b}^{+,R})\big),\]
so to prove the proposition, we need to understand how the pieces in the right- and left-hand sides glue together. 

Recall from Section~\ref{ssec:bordereddiagram} that each connected component of $S_{\epsilon}^{\bullet}\setminus ({\boldsymbol b}^{\bullet}\cup {\boldsymbol b}^{+,\bullet})$ is a disk with exactly one interval of $A_{\epsilon}^{\bullet}$ on its boundary; the rest of the disk's boundary consists of copies of arcs in ${\boldsymbol b}^{\bullet}\cup {\boldsymbol b}^{+,\bullet}$ and intervals on the binding $B^{\bullet}$. The intersection with $A_{\epsilon}^{\bullet}$ must contain a basepoint, and thus we have exactly one basepoint in each component of $S_{\epsilon}^{\bullet}\setminus ({\boldsymbol b}^{\bullet}\cup {\boldsymbol b}^{+,\bullet})$.
Thus,  restricting the gluing of $\HH^L$ and $\HH^R$ to these disks results in a pairing --- each disk in $S_{\epsilon}^L\setminus ({\boldsymbol b}^L\cup {\boldsymbol b}^{+,L})$
is glued to a disk in $S_{\epsilon}^R\setminus ({\boldsymbol b}^R\cup {\boldsymbol b}^{+,R})$ along an interval of $A_{\epsilon}^{L}=-A_\varepsilon^R$.
Thus, $S'\setminus {\boldsymbol b}'$ consists of $|E_+^L| = \frac{|E^L|}{2}$ disks, each containing exactly one basepoint.
\end{proof}

Next we observe the following:

\begin{proposition}\label{prop:perturb}
The arcs in ${\boldsymbol b}'$ are the standard perturbations (i.e., perturbations ``to the right", as in \cite{hkm09}) of the arcs in ${\boldsymbol a}'$.
\end{proposition}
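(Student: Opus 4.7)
The plan is to trace the perturbation direction used to construct each arc of ${\boldsymbol a}'$ from its corresponding arc of ${\boldsymbol b}'$, and to verify that, after gluing, all of these perturbations are uniform with respect to the boundary orientation of $\partial S'$.

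First, I would recall from Section~\ref{ssec:bordereddiagram} that within each piece $S_\epsilon^\bullet$ (for $\bullet \in \{L,R\}$), every arc in ${\boldsymbol a}'$---whether of type $a_i^\bullet$ (perturbed from the cutting arc $b_i^\bullet$) or of type $\tilde a_i^\bullet$ (perturbed from $b_i^{+,\bullet}$)---is constructed by isotoping the endpoints of the corresponding arc in ${\boldsymbol b}'$ in the negative direction along $\partial S_\epsilon^\bullet$. Since the endpoints of each such arc lie on $B^\bullet$, each of these perturbations takes place in the negative direction along $B^\bullet$.

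Next, I would verify that these locally defined directions are compatible after gluing. The identification $A_\epsilon^L = -A_\epsilon^R$ defining $S' = S_\epsilon^L \cup_{A_\epsilon} S_\epsilon^R$ glues the $A_\epsilon$ portions with opposite orientations, leaving the $B^L$ and $B^R$ components with their original orientations when viewed as parts of $\partial S'$. Hence the negative direction along $B^\bullet \subset \partial S_\epsilon^\bullet$ agrees with the negative direction along $B^\bullet \subset \partial S'$, so every arc in ${\boldsymbol a}'$ is obtained from the corresponding arc in ${\boldsymbol b}'$ by a single consistent perturbation in the negative direction along $\partial S'$. Equivalently, each arc in ${\boldsymbol b}'$ is the perturbation of its partner in ${\boldsymbol a}'$ obtained by moving the endpoints in the positive direction along $\partial S' = B$, which is precisely the ``standard perturbation to the right'' convention of~\cite{hkm09} recalled in Section~\ref{sssec:hfc}.

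I do not expect a serious obstacle in carrying this out. The only subtlety is the orientation compatibility check in the second step---confirming that the boundary orientation $B^\bullet$ inherits from $\partial S_\epsilon^\bullet$ coincides with its orientation as part of $\partial S'$---after which the conclusion follows directly from the construction of ${\boldsymbol a}'$ from ${\boldsymbol b}'$ in Section~\ref{ssec:bordereddiagram}.
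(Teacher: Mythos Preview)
Your proposal is correct and follows essentially the same approach as the paper's proof. The paper's argument is a one-line observation that each $\tilde a_i^{\bullet}$ and $a_i^{\bullet}$ is by construction a perturbation to the left of $b_i^{+,\bullet}$ and $b_i^{\bullet}$, respectively; your version spells out the orientation-compatibility check on $B^\bullet \subset \partial S'$ that the paper takes as evident, but the underlying reasoning is the same.
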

\begin{proof}
This follows immediately from the construction in Section~\ref{ssec:bordereddiagram}, as each arc $\tilde a_i^{\bullet}$ is a perturbation to the left of the arc $b_i^{+,\bullet}$, and each arc $a_i^{\bullet}$ is a perturbation to the left of $b_i^{\bullet}$.
\end{proof}

Note that this implies that we can replace ${\boldsymbol b}'$ with ${\boldsymbol a}'$ in the statement of Proposition~\ref{claim:discs}.

Recall from Section~\ref{ssec:gluing} the definition of the monodromy associated to the open book $(S,h)$ built by gluing a pair of compatible foliated open books: the abstract map $h:S\rightarrow S$ is defined by flowing $S_0$ through the manifold in the positive $t$-direction to $S_{2k}$, a map denoted by $\iota$, and then applying the left and right monodromies to the appropriate subsurfaces.

\begin{proposition}\label{prop:curves}
With the above definitions, $a_i'' = a_i'$ (via the trivial identification of the $\epsilon$ and $0$ pages). Furthermore, up to isotopy fixing the boundary, $h\colon S'\to S''$ maps the arcs $b_i'$ onto $b_i''$.
\end{proposition}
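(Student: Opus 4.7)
The plan is to prove the two assertions separately.

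For the first assertion $a_i''=a_i'$, the idea is simply to unwind the construction of the $\alpha$-curves in Section~\ref{ssec:bordereddiagram}: every $\alpha$-circle in $\HH^L$ (and similarly $\HH^R$) has the form $\alpha=a\cup -a$, with $a\subset S_\epsilon$ and $-a$ the corresponding arc in $-S_0$ under the tautological identification of the two copies of $S_0$ making up $\Sigma$. Restricting to $S'$ and $S''$ and matching pagewise on each side, it follows immediately that $a_i''=a_i'$.

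For the second assertion, I would split the $\beta$-curves of the glued diagram into two types and verify $h(b_i')=b_i''$ in each case. The first type comes from a cutting arc in $P_\epsilon^L$ or $P_\epsilon^R$: such a $\beta$-circle on the left is $\beta_j^L=b_j^L\cup -h^L\circ\iota^L(b_j^L)$, so $b_i'=b_j^L\subset P_0^L$ and $b_i''$ agrees with $h^L\circ\iota^L(b_j^L)$ after the $-S_0^L\leftrightarrow S_0^L$ identification. The construction in Section~\ref{ssec:gluing} describes the glued monodromy on $P_0^L$ as $h^L\circ\iota^L$, giving the identity at once; the right-sided $\beta^c$-circles are analogous. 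The second type consists of circles formed by pairing a $\beta^a$-arc from each side: for $i\in H_+^L=H_-^R$ the glued circle is $\beta_i^{+,L}\cup\beta_i^{-,R}$, whose restrictions give $b_i'=b_i^{+,L}$, a pushoff of $\gamma_i^{+,L}$ on $S_\epsilon^L$, and $b_i''=-h^R(\gamma_i^{-,R})$, identified with $h^R(\gamma_i^{-,R})$ on $S_0^R$. I would then check that the glued monodromy carries $\gamma_i^{+,L}$ to $h^R(\gamma_i^{-,R})$, using the ``up to isotopy fixing the boundary'' allowance in the statement to absorb the discrepancy between $b_i^{+,L}$ and $\gamma_i^{+,L}$ introduced by the pushoff. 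The case $i\in H_-^L=H_+^R$ is symmetric.

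The geometric heart of the argument, and what I expect to be the main obstacle, is the claim that $\iota(\gamma_i^{+,L})=\gamma_i^{-,R}$. To establish it, I would use the description of $\iota|_{R_0^L}\colon R_0^L\to R_{2k}^R$ from Section~\ref{ssec:gluing} as the composition $R_0^L\to R_+(\fol^L)\xrightarrow{\psi}R_-(\fol^R)\to R_{2k}^R$, combined with the Morse-theoretic description of sorting arcs in Section~\ref{sec:morse}: the arc $\gamma_i^{+,L}$ is isotopic to the stable submanifold in $\partial M^L$ through the positive hyperbolic point $h_i^L$, which via the orientation-reversing $\psi$ corresponds to the unstable submanifold in $\partial M^R$ through the matching negative hyperbolic point $h_i^R$, which in turn lifts to the cocore $\gamma_i^{-,R}$ in $R_{2k}^R$. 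The subtlety is checking that all three isotopies patch together consistently and that the sign-flip of hyperbolic points under $\psi$ interchanges ``stable'' and ``unstable'' in a way compatible with the stated $H_+^L=H_-^R$ identification of indices; once this is pinned down, composing with $h^R$ and unpacking the definition of $h$ on $R_0^L$ gives the second assertion.
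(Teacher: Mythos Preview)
Your proposal is correct and follows essentially the same route as the paper. Both arguments dispatch the first assertion by unwinding the $\alpha=a\cup(-a)$ structure of the $\alpha$-circles, and both reduce the second assertion to the geometric claim that $\iota$ carries $\gamma_i^{+,L}$ to $\gamma_i^{-,R}$, established via the chain of identifications $R_\epsilon^L\cong R_+(\fol^L)\xrightarrow{\psi}R_-(\fol^R)\cong R_{2k}^R$ and the Morse-theoretic description of sorting arcs as separatrices $\delta_i$; the paper phrases this as the isotopies $\gamma_i^{+,L}\cong\delta_i^L\cong\delta_i^R\cong\gamma_i^{-,R}$ valid away from a collar of $A$, then uses that the collar is a union of disks to absorb the remaining discrepancy, which is exactly your ``up to isotopy fixing the boundary'' step. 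Your explicit case split into $\beta^c$-circles (where the claim is immediate from $\beta_j=b_j\cup -h^\bullet\circ\iota^\bullet(b_j)$ and $h|_{P_0^\bullet}=h^\bullet\circ\iota^\bullet$) and glued $\beta^a$-arcs is a helpful clarification that the paper leaves implicit, but the substance is the same.
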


\begin{proof}
The first statement follows immediately from the above discussion. To prove the second assertion, consider the identifications 
\[R_{\epsilon}^L\cong R_+(M^L)\cong R_-(M^R)\cong R_{2k}^R \qquad\text{ and } \qquad R_{\epsilon}^R\cong R_+(M^R)\cong R_-(M^L)\cong R_{2k}^L\]
in the  definition of $\iota$.

Each of the surfaces is defined as a  neighborhood of a graph, and we note that away from a collar neighborhood of $A_\epsilon^L=-A_\epsilon^R$ and $A_{2k}^R=-A_{2k}^L$ the identifications may be chosen to induce isotopies \[\gamma_i^{+,L}\cong  {\delta_i^L\cong \delta_i^R}\cong \gamma_i^{-,R} \qquad\text{ and } \qquad \gamma_i^{+,R}\cong  {\delta_i^R\cong \delta_i^L}\cong \gamma_i^{-,L}.\] (Recall that $\delta_i$ are the (un)stable separatrices of the foliation connecting $h_i$ to the elliptic points.)

Away from $A_\epsilon^L=-A_\epsilon^R$, the $\boldsymbol{b}'$ agree with $\gamma^{+,\bullet}$ curves, which are carried by $\iota$ to $\gamma^{-,\bullet}$ curves before being sent by $h^R$ and $h^L$  to the ${\boldsymbol b}^{''}$ curves on $S''$.   As the collar neighborhood of $-A_0^L \cup A_\epsilon^L=A_0^R \cup -A_\epsilon^R$ is a union of disks, the statement follows.

\end{proof}

\begin{proof}[Proof of Theorem~\ref{thm:gluing}]
The above propositions demonstrate that the  multipointed Heegaard diagram $\HH$   constructed above corresponds to the open book $(S,h)$ obtained by gluing the sorted foliated open books $(\{S_i^L\},h^L)$ and $(\{S_i^R\},h^R)$. Indeed, first, $S$ is diffeomorphic to $S'$ and $S''$, and 
by Proposition \ref{claim:discs} the page $S'$ is cut by ${\boldsymbol a}'$ into $|\Gamma|$ disks, each containing a basepoint. Furthermore, by Proposition \ref{prop:perturb}, the arcs in ${\boldsymbol b}'$ are obtained by the usual perturbation of the ${\boldsymbol a}'$ arcs. Lastly, by Proposition \ref{prop:curves}, the arcs of ${\boldsymbol a}''$ are  the image of the arcs of ${\boldsymbol a}'$ under the identification of $S'$ with $S''$, while 
the arcs in ${\boldsymbol b}''$ are  the image of the arcs of ${\boldsymbol b}'$ under a diffeomorphism in the mapping class of $h$. There is only one generator in the page $S'$, which  by definition represents the multipointed contact invariant of $\xi$, while by construction it is also the box tensor of the bordered contact invariants for $\xi^L$ and $\xi^R$. This completes the proof. 
\end{proof}


\section{Relationship to the HKM contact element}
\label{sec:relation}
Let $\zz=(Z,a,m)$ be an arc diagram and $F=F(\zz)$ be the corresponding surface. The union of $Z$ and the parametrizing arcs  $\amalg_{i=1}^{2k}e_i$ on $F$ is an embedded graph, denoted by $G(\zz)\subset F$. For any subset $r$ of parametrizing arcs, Zarev defines a bordered sutured manifold $\mathcal{W}_r$, called a \emph{cap}, in  \cite[Definitions 2.5 and 6.1]{bs-JG}. We recall the definition for a $\beta$-type arc diagram $\zz$.

 First, $\mathcal{W}_r$ consists of the three-manifold $W=F\times [0,1]/\sim$ where $(x,t)\sim(x,t')$ whenever $x\in \bdy F$. Thus, $\bdy W=(-F\times \{0\})\cup (F\times \{1\})$, and the bordered and sutured parts of $\bdy W$ are $-F\times\{0\}$ and $F\times \{1\}$, respectively. Second, each component of $Z$ is a push-off of an arc in $\bdy F$ into the interior of $F$; denote the union of these arcs in $\partial F$ by $S_-$. Let $R$ be the union of the disk regions enclosed by $Z$ and $S_-$. Then, the sutures $\bsGamma_r\subset F\times \{1\}$ are defined as $\bdy\left( R\cup N(r)\right)\setminus S_-$ where $N(r)=\amalg_{e_i\in r}N(e_i)$. Furthermore, $R_{-}(\bsGamma_r)=R\cup N(r)$. Finally, $-F\times \{0\}$ is parametrized by $-\mathcal{Z}$.

Zarev constructs a bordered sutured Heegaard diagram $\HD^r=(\Sigma^r,\alphas^r,\betas^r,-\zz)$ for any cap $\mathcal{W}_r$ as follows. Let $D=(-Z)\times [0,1]$. The endpoints of any parametrizing arc $e_i$ is a pair of matched points in $a\subset -Z$. To construct $\Sigma^r$, first for every $e_i\notin r$, do $0$-surgery on $D$ along the surface framed $0$-sphere obtained by slightly pushing $(\bdy e_i)\times\{0\}$ into $\Int(D)$. Then, for any $e_i\in r$ attach a  band to $D$  along a small neighborhood in $-Z\times \{1\}$ of the $0$-sphere $(\bdy e_i)\times \{1\}$  so the resulting surface remains oriented. 
This diagram has no $\beta$-circles.  However, associated  to each $e_i$ there is a $\beta$ arc $\beta_i^r\subset \Sigma^r$ which connects the points $(\bdy e_i)\times \{0\}$. If $e_i\in r$, $\beta_i^r$ is the union of $(\bdy e_i)\times [0,1]$ and the core of the corresponding band. If $e_i\notin r$, $\beta_i^r$ goes over the tube on the boundary of the corresponding $1$-handle once. Write $\betas^r=\{\beta_1^r,\ldots,\beta_{2k}^r\}$. Next, for any $e_i\notin r$ let $\alpha_i^r\subset \Sigma^r$ be the dual circle to $\beta_i^r$, i.e., the belt sphere of the associated $1$-handle. This circle intersects $\beta_i^r$ in a single point denoted by $x_i^r$ and is disjoint from $\beta_{j}^r$ for any $j\neq i$. Then, let $\alphas^r=\{\alpha_i^r\ |\ e_i\notin r\}$. Finally, $G(-\zz)$ is embedded in $\Sigma^r$ such that $-Z$ is identified with $(-Z)\times \{0\}$ and any edge $e_i$ is identified with $\beta_i^r$. Clearly, this diagram has a unique generator $\x^r=\{x_i^r\ |\ e_i\notin r\}$. See Figure~\ref{fig:CapHD}.

\begin{figure}[h]
\begin{center}
\labellist
  	\pinlabel $\textcolor{black}{\scriptstyle 1}$ at 50 150
	\pinlabel $\textcolor{black}{\scriptstyle 2}$ at 440 150
  	\pinlabel $\textcolor{black}{\scriptstyle 3}$ at 150 220
	\pinlabel $\textcolor{black}{\scriptstyle 4}$ at 340 220
\endlabellist
\includegraphics[scale=0.25]{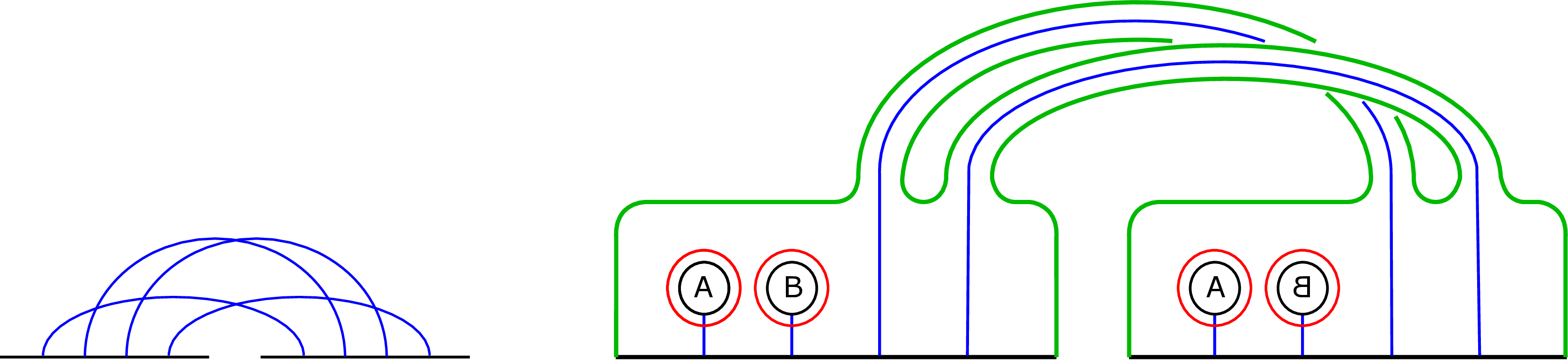}
\end{center}

\caption{Left: The arc diagram for the torus example in Figure \ref{fig:torus-bs-NEW}; note that the arc $e_i$ corresponding to $\delta_i$ is labelled with $i$. Right: The bordered sutured diagram $\HD^r$ for $r=\{e_2,e_4\}$.}
\label{fig:CapHD}
\end{figure}

We now consider a particular cap that arises naturally in the context of the bordered sutured manifold $(M,\bsGamma,\mathcal{Z})$ associated to a foliated contact three-manifold  $(M,\xi,\mathcal{F})$.  As in Section~\ref{ssec:suturedbordered}, let $\mathcal{Z}=(Z,a,m)$ be the arc diagram  which parametrizes the bordered part $F=F(\mathcal{Z})$ in $\bdy M$ and let $\Gamma
(\fol)$ denote the dividing set associated to the foliation $\mathcal{F}$. Let $k$ be the number of elements in $H_{\pm}$,  so $|a|=4k$. Each matched pair of points in $a$ corresponds to a hyperbolic point  and to a parameterizing arc $e_i\subset\delta_i$, where $i\in [2k]$ is the index of the point. From now on, fix $r=\amalg_{i\in H_{+}}e_i$ and define the associated cap $\mathcal{W}:=\mathcal{W}_{r}$ with suture $\bsGamma_{r}$.

Correspondingly, we define the sutured manifold $(M, \widetilde{\Gamma}):= (M,\bsGamma,\mathcal{Z})\cup_{F}\mathcal{W}$. We will show here that the sutures $\widetilde{\Gamma}$ may in fact be identified with  $-\Gamma(\fol)$. Recall that $\widetilde{\Gamma}$ separates $\partial M$ into regions $R_\pm(\widetilde{\Gamma})$.
It is straightforward from the definition that 
\[R_{-}(\widetilde{\Gamma})=R_{-}(\bsGamma)\cup R_-(\bsGamma_r)=R_-(\bsGamma)\bigcup(R\cup N(r)).\]
Observe that $R_-(\bsGamma)\bigcup(R\cup N(r))$ is a neighborhood of the stable separatrices of points in $H_+$. Thus, we can identify it with $R_+(\fol)$, and as a consequence, $\widetilde{\Gamma}=-\bdy R_+(\fol)$ and is isotopic to $-\Gamma(\fol)$.  To avoid a proliferation of notation, we will henceforth denote the sutured manifold obtained by gluing the cap as $(M,\Gamma):=(M, -\Gamma(\fol))$.

Consequently, if we define $\overline{\mathcal{W}}=(-W,\bsGamma,-\overline{\zz})$, then \[(-M,-\Gamma(\fol))=(-M,\bsGamma,\overline{\zz})\cup_{\overline{F}} \overline{\mathcal{W}}.\]
The  gluing formula gives the following:
\[\sfc(-M,-\Gamma(\fol))\simeq \bsahat(-M,\bsGamma,\overline{\zz})\boxtimes_{\mathcal{A}(\overline{\zz})}\bsdhat(\overline{\mathcal{W}}).\]

Let  $\iota_r\in \mathcal{A}(\overline{\zz})$ be the idempotent corresponding to the horizontal arcs for the points in $a$ that lie on the boundary of $r$. It follows from the special structure of the Heegaard diagram for $\mathcal{W}$  that $\bsdhat(\overline{\mathcal{W}})$ is an elementary type $D$ module generated by $\iota_{r}$; see \cite[Proposition 6.2]{bs-JG}.  Thus
\[\sfh(-M,-\Gamma(\fol))\cong H_*\left(\bsahat(-M,\bsGamma,\overline{\zz})\right)\cdot \iota_r.\]
See \cite[Theorem 6.5]{bs-JG}.

We now prove Theorem~\ref{thm:hkm} for $\iota_+$ given by $\iota_r$. 

\begin{proof}[Proof of Theorem~\ref{thm:hkm}]
Suppose $(\{S_i\}_{i=0}^{2k}, h, \{\gamma_i^{\pm}\})$ is a sorted foliated open book compatible with the triple $(M,\xi,\mathcal{F})$. Let $\HH=(\Sigma,\alphas,\betas=\betas^a\cup\betas^c,\zz)$ be the bordered sutured Heegaard diagram constructed in Section \ref{ssec:bordereddiagram} for $(M,\bsGamma, \zz)$. We begin with a brief sketch of the argument before presenting it in detail.  First, we will construct a sutured Heegaard diagram for $(M,-\Gamma(\fol))$ by gluing $\HH$ to the diagram $\HH^{r}$, where $\HH^{r}$ denotes the diagram for the cap $\mathcal{W}$ described above.  Then, we show that after possibly performing some handleslides and $2k$-times destabilizing this diagram, one recovers the sutured Heegaard diagram corresponding to the partial open book $(S'=S_{0}, P'=P_{0},h'=h|_{P_{2k}}\circ\iota)$ compatible with the contact sutured manifold $(M,\Gamma(\fol),\xi)$, following the conventions in \cite[Section 2]{hkm09}. Note that this is a diagram for $(M,-\Gamma(\fol))$.  Letting $\x^r=\{x_i^r\ |\ i\in H_-\}$ denote the generator of $\HH^r$, we additionally prove that the map induced  by the Heegaard moves sends the generator $\x\otimes \x^r$ in $\sfc(\overline{\HH}\cup \overline{\HH^r})$ to the generator  which represents the contact invariant $\mathrm{EH}(M,\Gamma(\fol), \xi)\in \sfh(-M,-\Gamma(\fol))$.

After gluing $\HH$ and $\HH^r$ along $Z$, for every $i\in [2k]$, the arc $\beta_i^{r}$ in $\Sigma^r$ will be paired with the arc $\beta_i^{a}$ in $\Sigma$, producing a circle $\widetilde{\beta}_i$. For any $i\in H_+$, there exists a circle $\widetilde{\alpha}_i$ in $\alphas$, and for any 
$i\in H_-$, we define $\widetilde{\alpha}_i=\alpha_i^{r}$. We write ${\alphas}^r=\{\widetilde{\alpha}_i\ |\ i\in H_-\}$. Then 
\[\HH'=\HH\cup\HH^{r}=\left(\Sigma'=\Sigma\cup_{Z}\Sigma^{r},\alphas'=\alphas\cup{\alphas}^r, \betas'=\betas^c\cup\{\widetilde{\beta}_i\}_{i\in [2k]}\right) \]
is a Heegaard diagram for $(M,-\Gamma(\fol))$. 
In this diagram, we see that $\widetilde{\alpha}_i\cap\widetilde{\beta}_i=\{x_i^{+}\}$ for $i\in H_+$, while $\widetilde{\alpha}_i\cap\widetilde{\beta}_i=\{x_i^{r}\}$ for $i\in H_-$. Furthermore, for $i\in H_-$, the $\alpha$-circle $\widetilde{\alpha}_i$ is disjoint from all circles in $\betas'$ aside from $\widetilde{\beta}_i$. On the other hand, since $\HD$ is defined using an abstract sorted foliated open book decomposition, Condition (2) of Definition \ref{def:sorted} implies that for any $i,j\in H_+$, the arc $\beta_i^a\subset\Sigma$ is disjoint from the circle $\widetilde{\alpha}_j\subset\Sigma$ if and only if $j>i$. Thus, for $i\in H_+$, the $\beta$-circle $\widetilde{\beta}_i$ is disjoint from the subset
\[\{\alpha_1,\ldots,\alpha_{2g_0+n_0+|A_0|-k-2}\}\cup\{\widetilde{\alpha}_j\ |\ j\in H_+\ \text{and}\ j>i\}\cup{\alphas}^r\subset\alphas'.\]

Suppose $i\in H_-$. Since  $\widetilde{\alpha}_i$ intersects $\widetilde{\beta}_i$ once, the curves in $\alphas$  can be made disjoint from $\widetilde{\beta}_i$ by sliding each curve in $\alphas$ that intersects $\widetilde{\beta}_i$ over $\widetilde{\alpha}_i$. Then we may destabilize $\HH'$ using the pair $\widetilde{\alpha}_i$ and $\widetilde{\beta}_i$. Repeating this procedure for each $i\in H_-$ yields a Heegaard diagram $\HD''=(\Sigma'',\alphas'',\betas'')$ for $(M,-\Gamma(\fol))$, where  $\alphas''=\alphas$,  $\betas''=\betas^c\cup\{\widetilde{\beta}_i\}_{i\in H_+}$, and $\Sigma''$ is obtained from $\Sigma$ by attaching bands along $\bdy \beta_i^a$ for each $i\in H_+$.

Write $H_-=\{j_1<j_2<\cdots<j_k\}$. Let $\HH'_l=(\Sigma'_l,\alphas'_l,\betas'_l)$ be the Heegaard diagram obtained from $\HH'$ by applying the aforementioned Heegaard moves for $j_1,\ldots,j_{l-1}$. Denote by
\[f^-_{l}:\sfc(\ol{\HH'}_{l})=\sfc(\Sigma'_{l},\betas'_{l},\alphas'_{l})\to\sfc(\ol{\HH'}_{l+1})=\sfc(\Sigma'_{l+1},\betas'_{l+1},\alphas'_{l+1})\]
the composition of chain maps corresponding to the $l$th sequence of moves.

Let $f^-=f^-_{k}\circ\cdots\circ f^-_1$.
Note that the intersection points for $\HH$ and $\HH''$ are in one-to-one correspondence, and $\x\in\sfc(\ol{\HH''})$ denotes the generator corresponding to the intersection point $\x$ for $\HH$.
Let $f^h_l$ be the composition of chain maps associated with the sequence handleslides, and let $f_l^{ds}$ be the chain map associated with the  subsequent destabilization. 
By an argument analogous to that in the proof of Proposition~\ref{prop:stab-invariance}, we have that  for an appropriate choice of the complex structure $f^-_l=f_l^{ds}\circ f_l^h(\x\otimes \x^{r}_l)=\x\otimes\x_{l+1}^{r}$, and so $f^-(\x\otimes\x^{r})=\x$. Here, $\x_l^{r}=\{x_{j_i}^{r}\ |\ l\le i\le k\}$.

Next, we simplify the diagram $\HH''$ using the pairs $\widetilde{\alpha}_i$ and $\widetilde{\beta}_i$ for all $i\in H_+$. Suppose $H_+=\{i_1<i_2<\cdots<i_k\}$. Note that $\widetilde{\beta}_{i_1}\cap\widetilde{\alpha}_{i_1}=x^+_{i_1}$ while $\widetilde{\beta}_{i_1}$ is disjoint from the rest of the $\alpha$-circles. As before,  first slide every $\beta$-circle that intersects $\widetilde{\alpha}_{i_1}$ over $\widetilde{\beta}_{i_1}$ to remove the intersection points and then destabilize the diagram using  
$\widetilde{\alpha}_{i_1}$ and $\widetilde{\beta}_{i_1}$. In the new diagram, $\widetilde{\beta}_{i_2}$ is disjoint from all $\alpha$-circles except  $\widetilde{\alpha}_{i_2}$, and $\widetilde{\beta}_{i_1}\cap\widetilde{\alpha}_{i_1}=x^+_{i_2}$. So we can repeat this simplification for $i_2$, and in fact continue until $i=i_k$. At the end, we get a surface homeomorphic to $\Sigma\setminus \amalg _{i\in H_+}N(\beta_i^a)$. By definition, $\Sigma=S_{\epsilon}\cup_B(-S_0)$, where $S_{\epsilon}$ is a copy of  $S_0$. Therefore, 
\[\Sigma\setminus \amalg_{i\in H_+}N(\beta_i^a)\cong (S_{0}\setminus\amalg_{i\in H_+}N(\gamma_i^+))\cup_B(-S_0)=P_{0}\cup_{B}(-S_0)\]
and this surface is equipped with 
\[\{\alpha_1,\ldots,\alpha_{2g_0+n_0+|A_0|-k-2} \}\quad \text{and} \quad\{\beta_1,\ldots,\beta_{2g_0+n_0+|A_0|-k-2}\}.\]
This is exactly the sutured Heegaard diagram defined in \cite{hkm09} associated to the partial open book $(S',P',h')$. 

Let $\HH''_l$ be the Heegaard diagram obtained from $\HH''$ after applying the above Heegaard moves for $i_1,\ldots,i_{l-1}$ and denote the chain map induced by the $l$th sequence of moves by
\[f_l^+:\sfc(\ol{\HH''_l})\to\sfc(\ol{\HH''_{l+1}}).\]

Let $f^+=f_k^+\circ\cdots\circ f_1^+$. Once again, by an argument analogous to that in the proof of Proposition~\ref{prop:stab-invariance} we have that for an appropriate choice of  complex structure, $f^+(\x)$ is the generator representing the contact invariant $\mathrm{EH}(M,\Gamma(\fol),\xi)$, i.e., $\{x_1,\ldots,x_{2g_0+n_0+|A_0|-k-2}\}$.

Hence, $f^+_*\circ f^-_*(c_A(\xi)\cdot\iota_r)=f^+_*\circ f^-_*([\x\otimes\x^r])=\mathrm{EH}(M,\Gamma(\fol),\xi)$. 
\end{proof}


\section{Examples}
\label{sec:examples}

\begin{example}\label{ex:ball1} 
For our first example, consider the foliated open books obtained as follows. Begin with the open book for $(S^3, \xi_{\mathrm{std}})$ whose pages are disks. Embedding a sphere in this open book so that it intersects the binding in four points, as in Figure~\ref{fig:H1ball},   produces two tight balls. We first consider the ball whose initial page is a rectangle. 

\begin{figure}[h]
	\labellist
	\pinlabel {$e_+$} [bl] at 21 143
	\pinlabel {$e_-$} [l] at 21 100
	\pinlabel {$e_+$} [l] at 21 66
	\pinlabel {$e_-$} [tl] at 21 23
	\pinlabel {$e_+$} [ ] at 182 126
	\pinlabel {$e_+$} [ ] at 182 70
	\pinlabel {$e_+$} [ ] at 300 126
	\pinlabel {$e_+$} [ ] at 300 70
	\pinlabel {$S_0$} [ ] at 167 23
	\pinlabel {$S_1$} [ ] at 225 23
	\pinlabel {$S_2$} [ ] at 284 23
	\endlabellist
	\begin{center}
		\includegraphics[scale=0.8]{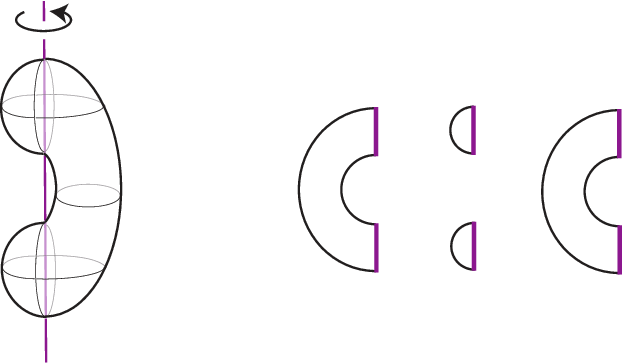}
		\caption{The  sphere shown on the left separates $S^3$ into two tight balls and induces a foliated open book for each.   The elliptic points are labelled with the signs associated to the ball whose first page is connected, as shown on the right.}\label{fig:H1ball}
	\end{center}
\end{figure}

Observe that this ball can also be obtained by cutting the solid torus of Example~\ref{ex:torus} along a meridional disk disjoint from the binding, which has the effect of removing a pair of hyperbolic points from the induced foliated open book.  In this example, the first hyperbolic point is positive and the second negative, so the foliated open book is automatically sorted.  The associated Heegaard diagram  $\HD_1$ is shown  in Figure~\ref{fig:HD-ball}.
 
\begin{figure}[h]
	\labellist
	\pinlabel {\Large $S_\varepsilon$} [B] at 52 5
	\pinlabel {\Large $-S_0$} [B] at 98 5
	\pinlabel {$B$} [ ] at 71 -6
	\pinlabel {$e_+$} [b] at 74 177
	\pinlabel {$e_-$} [t] at 74 137
	\pinlabel {$e_+$} [b] at 74 79
	\pinlabel {$e_-$} [t] at 74 38
	\pinlabel {$x$} [ ] at 28 130
	\pinlabel {$y$} [ ] at 101 144
	\endlabellist
	\begin{center}
	\includegraphics[scale=0.8]{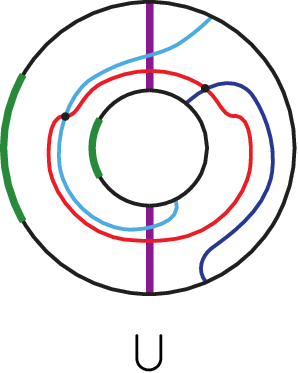}
	\end{center}
\caption{The bordered sutured Heegaard diagram adapted to the sorted foliated open book of Example~\ref{ex:ball1}.}\label{fig:HD-ball}
\end{figure}

 Let $x$ be the unique intersection point on the $S_{\epsilon}$ part of the diagram, and let $y$ be the other intersection point. Let $\rho_1$ and $\rho_2$ be the algebra elements corresponding to the Reeb chords on the inside and outside components of $\bdy\overline{\HD_1}$, as seen on Figure~\ref{fig:HD-ball}.  The type $A$ structure $\bsahat(\overline{\HD_1})$ is generated by $x$ and $y$, and has structure maps
\begin{align*}
m_2(y, \rho_1) & = x\\
m_2(y, \rho_2) & = x.
\end{align*}
The contact class $c_A(B^3,\xi_1, \fol_1)$ is the homotopy equivalence class of $x$. 
\end{example}

\begin{example}\label{ex:ball2}
As a second example we consider the ball in $(S^3, \xi_{\mathrm{std}})$ which is the complement of the previous example. 
\begin{figure}[h]
	\labellist
	\pinlabel {$S_0$} [ ] at 41 -7
	\pinlabel {$S_1$} [ ] at 132 -7
	\pinlabel {$S_2$} [ ] at 230 -7
	\endlabellist
	\begin{center}
		\includegraphics[scale=0.8]{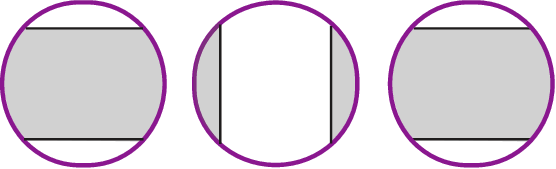}
		\end{center}
		\caption{The shaded regions represent the pages of the solid ball from Example~\ref{ex:ball1}, while their unshaded complements are the pages of a foliated open book for the ball of Example~\ref{ex:ball2}.}\label{fig:compball1}
\end{figure}

Observe that $S_0$ is disconnected, and the first negative hyperbolic point is followed by a second positive hyperbolic point.  The associated foliated open book is not sorted, so we stabilize it before constructing the associated bordered sutured Heegaard diagram $\HD_2$. The result of the stabilization is shown in  Figure~\ref{fig:torus2stab}.

\begin{figure}[h]
	\labellist
	\pinlabel {$S_0$} [ ] at 16 31
	\pinlabel {$S_1$} [ ] at 131 12
	\pinlabel {$S_1$} [ ] at 111 138
	\pinlabel {$\tau$} [ ] at 140 49
	\pinlabel {$\tau^{-1}$} [ ] at 100 98
	\pinlabel {$\gamma_2^-$} [ ] at 149 78
	\pinlabel {$\gamma_3^+$} [ ] at 144 137
	\pinlabel {$S_2$} [ ] at 214 31
	\endlabellist
	\begin{center}
		\includegraphics[scale=0.8]{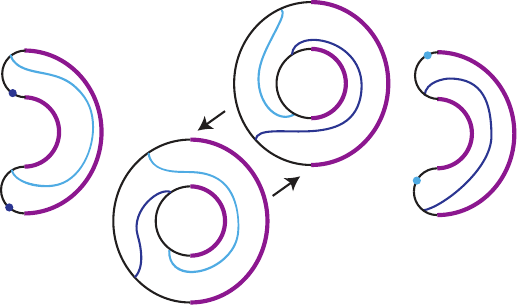}
		\caption{A sorted foliated open book for the ball of Example~\ref{ex:ball2}.}\label{fig:torus2stab}
	\end{center}
\end{figure}

By keeping track of the four elliptic points in the embedded open book decomposition of $(S^3, \xi_{\mathrm{std}})$ and their images on the two Heegaard diagrams,  we see that the corresponding Heegaard diagram $\HD$ for $(S^3, \xi_{\mathrm{std}})$ is obtained by gluing $\HD_1$ to $\HD_2$ by identifying the outside boundary component of one diagram to the inside boundary component of the other, and vice versa. Thus, to have consistent labels (between $\HD_1$ and $\HD_2$) of the algebra generators for the given foliation, we must label the generator for the inside boundary component of $\overline{\HD_2}$ by $\rho_2$ and the other one by $\rho_1$. Let $s$ be the unique intersection point on the $S_{\epsilon}$ part of $\overline{\HD_2}$, and let $t$ be the other intersection point. The type $D$ structure $\bsdhat(\overline{\HD_2})$ is generated by $s$ and $t$, and has structure map
\[\delta^1(t)  = (\rho_1+\rho_2)\otimes s.\]
The contact class $c_D(B^3,\xi_2, \fol_2)$ is the homotopy equivalence class of $s$. 
\end{example}

\begin{example}\label{ex:ball-glue}
We take the box tensor product of the type $A$ structure from Example~\ref{ex:ball1} and the type $D$ structure from Example~\ref{ex:ball2}, to compute the contact class for $(S^3, \xi_{\mathrm{std}})$. Considering the idempotents, $\bsahat(\overline{\HH_1})\sqtens\bsdhat(\overline{\HH_2})$ is generated by $y\otimes t$ and $x\otimes s$.The differential $\bdy=\bdy^{\sqtens}$ is trivial, so the homology $H_{\ast}(\bsahat(\overline{\HH_1})\sqtens\bsdhat(\overline{\HH_2}))$ is generated by $[y\otimes t]$ and $[x\otimes s]$. Therefore, $[x\otimes s]$ is the contact class of the glued three-sphere $(S^3, \xi_{\mathrm{std}})$. 
\end{example}

\begin{example}\label{ex:torus1}

Finally, we return to the  the running example used throughout this paper: the solid torus first introduced in Example~\ref{ex:torus}.  Recall that Example~\ref{ex:HeegaardDiagram} constructed the bordered sutured Heegaard diagram adapted to a  sorted foliated open book for this manifold.  Here we compute the associated type $A$ structure.

For simplicity we label the intersection points in Heegaard diagram $\HD$ by $x_1$, $x_2$, $x_3$, $x_4$, $y_1$, and $y_2$ as in Figure \ref{fig:HD5_diagram_labelled}. In this diagram, we have six generators\[\left\{x_1y_1, x_1y_2, x_2y_2,x_3y_1,x_3y_2,x_4y_2\right\}\] where  $x_1y_1$ is the generator  $x_1^+x_3^+$ defined in Section~\ref{ssec:bordereddiagram}.  Then, we simplify our depiction of the Heegaard diagram $\HD$ by first removing the domains adjacent to $\bdy\Sigma\setminus Z$ and then cutting along the intersection points $x_1$, $y_1$ and $y_2$. Moreover, $\overline{\HD}$ is obtained from $\HD$ by switching $\alphas$ and $\betas$, so we draw $\betas$ arcs in red, and $\alphas$ circles in blue.  See Figure \ref{fig:HDTorus-A}.

\begin{figure}[h]
	\labellist
	\pinlabel {$S_\varepsilon$} [ ] at 15 0
	\pinlabel {$-S_0$} [ ] at 308 0
	\pinlabel {$B$} [] at 163 61
	\pinlabel {$e_+$} [b] at 75 140
	\pinlabel {$e_-$} [t] at 75 101
	\pinlabel {$e_+$} [b] at 75 43
	\pinlabel {$e_-$} [t] at 75 3
	\pinlabel {$e_+$} [b] at 258 140
	\pinlabel {$e_-$} [t] at 258 101
	\pinlabel {$e_+$} [b] at 258 43
	\pinlabel {$e_-$} [t] at 258 3
	\pinlabel {\tiny $x_1$} [ ] at 27 93
	\pinlabel {\tiny $y_1$} [ ] at 121 86
	\pinlabel {\tiny $y_2$} [ ] at 214 80
	\pinlabel {\tiny $x_2$} [ ] at 295 55
	\pinlabel {\tiny $x_3$} [ ] at 284 93
	\pinlabel {\tiny $x_4$} [ ] at 273 113
	\endlabellist
	\begin{center}
	\includegraphics[scale=0.8]{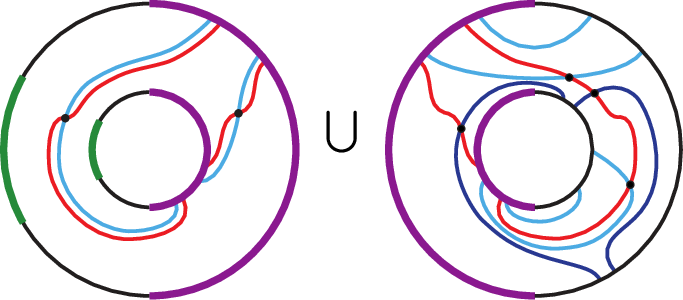}
	\end{center}
	\caption{The Heegaard diagram $\HH$ from Example~\ref{ex:HeegaardDiagram}, with intersection points labelled so that $x_1y_1$ is the contact class.}\label{fig:HD5_diagram_labelled}
\end{figure}

\begin{figure}[h]
\begin{center}
\labellist
         \pinlabel $\textcolor{blue}{\beta_2^-}$ at 162  8
         \pinlabel $\textcolor{gray}{\tiny 1}$ at 157  25
        \pinlabel $\textcolor{blue}{\beta_4^-}$ at 162 40
        \pinlabel $\textcolor{gray}{\tiny 2}$ at 157  55
	\pinlabel $\textcolor{blue}{\beta_3^+}$ at 162 71
	\pinlabel $\textcolor{gray}{\tiny 3}$ at 157  87
	\pinlabel $\textcolor{blue}{\beta_1^+}$ at 162 106	
	\pinlabel $\textcolor{blue}{\beta_2^-}$ at 162 175
	\pinlabel $\textcolor{gray}{\tiny 4}$ at 157  192
	\pinlabel $\textcolor{blue}{\beta_4^-}$ at 162 207
	\pinlabel $\textcolor{gray}{\tiny 5}$ at 157  223
	\pinlabel $\textcolor{blue}{\beta_3^+}$ at 162 241
	\pinlabel $\textcolor{gray}{\tiny 6}$ at 157 256
	\pinlabel $\textcolor{blue}{\beta_1^+}$ at 162 273
	\pinlabel $y_2$ at -8 10
	\pinlabel $y_1$ at -8 246
	\pinlabel $x_2$ at 83 186	
	\pinlabel $x_3$ at 83 217	
	\pinlabel $x_4$ at 83 248	
	\pinlabel $x_1$ at 83 278
	\pinlabel $x_1$ at 104 113
	\pinlabel $y_1$ at 95 153
	\pinlabel $y_2$ at 130 153
\endlabellist
\includegraphics[scale=0.8]{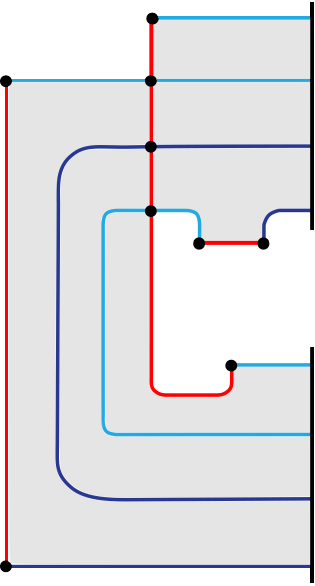}
\caption{The portion of the Heegaard diagram $\HD$ from Figure~\ref{fig:HD5_diagram_labelled} relevant to the computation of $\bsahat(\overline{\HD})$. The boundary is parametrized by an arc diagram which consists of two arcs with eight points on them. The intervals between points are labeled with $\{1,\ldots,6\}$ in gray.}
\label{fig:HDTorus-A}
\end{center}
\end{figure} 
   
Every region is adjacent to the boundary, so $m_1=0$. We computed $m_i$ for $i>1$, by finding the local coefficients of curves and computing their Maslov indices. Then, it is easy to see that the curves with Maslov index one contribute to the maps listed below. Moreover, $m_i=0$ for $i>3$. Note that instead of \[m_2(x_3y_2,I_{\{2,4\}}a(\{1,5\}))=x_3y_1\]
we write simply \[m_2(x_3y_2,\{1,5\})=x_3y_1,\]
as the appropriate completion of a set of Reeb chords is clear from the context. 
The nontrivial maps are as follows. 

\begin{align*}
&m_2(x_3y_2, 5)=x_4y_2\\
&m_2(x_3y_2,\{1,5\})=x_3y_1\\
&m_2(x_3y_2,\{2,4\})=x_3y_1\\
&m_2(x_3y_2,2)=x_2y_2\\
&m_2(x_3y_2,23)=x_1y_2\\
&m_2(x_3y_2,56)=x_1y_2\\
&m_2(x_3y_2,\{123,5\})=x_1y_1\\
&m_2(x_3y_2,\{2,456\})=x_1y_1\\
&m_2(x_4y_2,123)=x_1y_1\\
&m_2(x_4y_2,1)=x_3y_1\\
&m_2(x_4y_2,6)=x_1y_2\\
&m_2(x_2y_2,4)=x_3y_1\\
&m_2(x_2y_2,456)=x_1y_1\\
&m_2(x_2y_2,3)=x_1y_2\\
&m_2(x_3y_1,23)=x_1y_1\\
&m_2(x_3y_1,56)=x_1y_1\\
&m_3(x_4y_2,\{1,6\},5)=x_1y_1\\
&m_3(x_4y_2,1,\{5,6\})=x_1y_1\\
&m_3(x_4y_2,\{4,6\},2)=x_1y_1\\
&m_3(x_4y_2,4,\{2,6\})=x_1y_1\\
&m_3(x_1y_2,1,5)=x_1y_1\\
&m_3(x_1y_2,4,2)=x_1y_1\\
&m_3(x_2y_2,4,\{2,3\})=x_1y_1\\
&m_3(x_2y_2,\{3,4\},2)=x_1y_1\\
&m_3(x_2y_2,1,\{3,5\})=x_1y_1\\
&m_3(x_2y_2,\{1,3\},5)=x_1y_1\\
&m_3(x_3y_2,\{1,23\},5)=x_1y_1\\
&m_3(x_3y_2,\{4,56\},2)=x_1y_1\\
&m_3(x_3y_2,\{4,23\},2)=x_1y_1\\
&m_3(x_3y_2,\{1,56\},5)=x_1y_1\\
&m_3(x_3y_2,\{1,2\},\{3,5\})=x_1y_1\\
&m_3(x_3y_2,\{4,5\},\{2,6\})=x_1y_1\\
&m_3(x_3y_2,\{2,4\},\{2,3\})=x_1y_1\\
&m_3(x_3y_2,\{1,5\},\{5,6\})=x_1y_1
\end{align*}

\end{example}

\begin{example}\label{ex:torus2}
Our next example is the solid torus in $(S^3, \xi_{\mathrm{std}})$ which is the complement of the previous example. 
 As in Example~\ref{ex:ball2}, the embedded foliated open book arising from the disk open book for $(S^3, \xi_{\mathrm{std}})$ is not sorted.  A sequence of two stabilizations results in a sorted foliated open book,  the first and last page of which are shown in  Figure~\ref{fig:torus2stab}.

\begin{figure}[h]
	\labellist
	\pinlabel {$S_0$} [ ] at 67 3
	\pinlabel {$S_4$} [ ] at 146 3
	\pinlabel {$\gamma_1^+$} [ ] at 11 54
	\pinlabel {$\gamma_2^+$} [ ] at 53 67
	\pinlabel {$\gamma_3^-$} [ ] at 172 10
	\pinlabel {$\gamma_4^-$} [ ] at 200 25
	\endlabellist
	\begin{center}
		\includegraphics[scale=1]{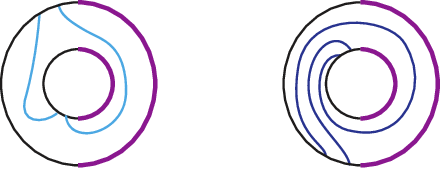}
		\caption{The first and last page of a sorted foliated open book for the solid torus of Example~\ref{ex:torus2}.}\label{fig:torus2stab}
	\end{center}
\end{figure}

The construction from Section~\ref{ssec:bordereddiagram} yields the Heegaard diagram $\HD'$ shown in Figure~\ref{fig:HD-torus-D}.

 \begin{figure}[h]
 	\labellist
 	\pinlabel {$S_\varepsilon$} [ ] at 15 0
 	\pinlabel {$-S_0$} [ ] at 308 0
 	\pinlabel {$B$} [] at 163 61
 	\pinlabel {$e_+$} [b] at 75 140
 	\pinlabel {$e_-$} [t] at 75 101
 	\pinlabel {$e_+$} [b] at 75 43
 	\pinlabel {$e_-$} [t] at 75 3
 	\pinlabel {$e_+$} [b] at 258 140
 	\pinlabel {$e_-$} [t] at 258 101
 	\pinlabel {$e_+$} [b] at 258 43
 	\pinlabel {$e_-$} [t] at 258 3
 	\pinlabel {\tiny $x_1$} [ ] at 27 93
 	\pinlabel {\tiny $y_1$} [ ] at 121 86
 	\pinlabel {\tiny $y_2$} [ ] at 208 82
 	\pinlabel {\tiny $x_2$} [ ] at 235 35
 	\pinlabel {\tiny $x_3$} [ ] at 288 33
 	\pinlabel {\tiny $x_4$} [ ] at 309 74
 	\pinlabel {\tiny $x_5$} [ ] at 298 108
 	\pinlabel {\tiny $x_6$} [ ] at 276 122
 	\endlabellist
	\begin{center}
		\includegraphics[scale=0.8]{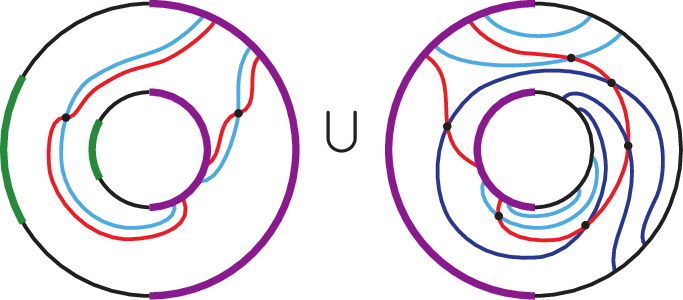}
	\end{center}
		\caption{The bordered sutured Heegaard diagram $\HD'$ adapted to the sorted foliated open book depicted in Figure~\ref{fig:torus2stab}.}\label{fig:HD-torus-D}
\end{figure}

 We label the intersection points in Heegaard diagram $\HD'$ by $x_1'$, $x_2'$, $x_3'$, $x_4'$,  $x_5'$, $x_6'$, $y_1'$, and $y_2'$ as in Figure \ref{fig:HD-torus-D}. We have eight generators\[\left\{x_1'y_1', x_1'y_2', x_2'y_2',x_3'y_1',x_4'y_1',x_4'y_2', x_5'y_1', x_6'y_2'\right\}\] where  $x_1'y_1'$ is the generator  $x_2^+x_4^+$ defined in Section~\ref{ssec:bordereddiagram}. By removing the domains adjacent to  $\bdy\Sigma\setminus Z$ and then cutting along the points $x_1', x_4', y_1'$ and $y_2'$ we get the planar depiction of the Heegaard diagram $\HD'$ shown in Figure~\ref{fig:HDTorus-DO}. 
By matching elliptic points again, as in Example~\ref{ex:ball2}, we obtain labels $\{1, \ldots, 6\}$ on the intervals of the arc diagram for $\HD'$ consistent with those for the diagram $\HD$ from Example~\ref{ex:torus1}.
 
  \begin{figure}[h]
	\begin{center}
		\labellist
		     \pinlabel $\textcolor{blue}{\beta_2^+}$ at -8  10
		     \pinlabel $\textcolor{blue}{\beta_4^+}$ at -8  44
		     \pinlabel $\textcolor{blue}{\beta_3^-}$ at -8  78
		     \pinlabel $\textcolor{blue}{\beta_1^-}$ at -8  112
   		     \pinlabel $\textcolor{blue}{\beta_2^+}$ at -8  148
		     \pinlabel $\textcolor{blue}{\beta_4^+}$ at -8  180
		     \pinlabel $\textcolor{blue}{\beta_3^-}$ at -8  218
		      \pinlabel $\textcolor{blue}{\beta_1^-}$ at -8  250
		      \pinlabel $x_1'$ at 57  5
		      \pinlabel $x_2'$ at 57  35
		      \pinlabel $x_3'$ at 57  88
		      \pinlabel $x_4'$ at 57  115
		      \pinlabel $x_1'$ at 57  145
		      \pinlabel $x_6'$ at 57  173
		      \pinlabel $x_5'$ at 57  228
		      \pinlabel $x_4'$ at 57  255
		      \pinlabel $y_1'$ at 88  35
		      \pinlabel $y_2'$ at 88  88
		      \pinlabel $y_1'$ at 88  173
		      \pinlabel $y_2'$ at 88 228
		      \pinlabel $\textcolor{gray}{\tiny 1}$ at -8  27
		      \pinlabel $\textcolor{gray}{\tiny 2}$ at -8  61
		      \pinlabel $\textcolor{gray}{\tiny 3}$ at -8  95
		      \pinlabel $\textcolor{gray}{\tiny 4}$ at -8  164
		      \pinlabel $\textcolor{gray}{\tiny 5}$ at -8  198
		      \pinlabel $\textcolor{gray}{\tiny 6}$ at -8  232
	\endlabellist
	\includegraphics[scale=0.8]{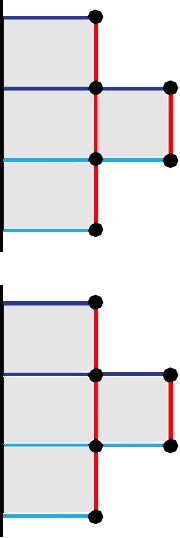}
		\end{center} 
		\caption{The portion of the Heegaard diagram $\HD'$ from Figure~\ref{fig:HD-torus-D} relevant to the computation of $\bsdhat(\overline{\HD'})$.}\label{fig:HDTorus-DO}
\end{figure}

This time we have a nice diagram, and the type $D$ structure is straightforward to compute. The type $D$ structure maps are listed below.
\begin{align*}
\delta^1(x_1'y_1') &= 0\\
\delta^1(x_1'y_2') &= 2\otimes x_1'y_1' + 5\otimes x_1'y_1'\\
\delta^1(x_2'y_2') &= I\otimes x_3'y_1' + 1\otimes x_1'y_2'\\
\delta^1(x_3'y_1') &= 12\otimes x_1'y_1' \\
\delta^1(x_4'y_1') &= 123\otimes x_1'y_1' + 456\otimes x_1'y_1' + 3\otimes x_3'y_1' + 6\otimes x_5'y_1'\\
\delta^1(x_4'y_2') &= 123\otimes x_1'y_2' + 456\otimes x_1'y_2' + 23\otimes x_2'y_2' + 2\otimes x_4'y_1' + 5\otimes x_4'y_1' + 56\otimes x_6'y_2'\\
\delta^1(x_5'y_1') &= 45\otimes x_1'y_1' \\
\delta^1(x_6'y_2') &= 4\otimes x_1'y_2' + I\otimes x_5'y_1' 
\end{align*}
\end{example}

\begin{example}
We take the box tensor product of the type $A$ structure from Example~\ref{ex:torus1} and the type $D$ structure from Example~\ref{ex:torus2}, to compute the contact class for $(S^3, \xi_{\mathrm{std}})$. 

Considering the idempotents, $\bsahat(\overline{\HH})\sqtens\bsdhat(\overline{\HH'})$ has $8$ generators: 
\[\left\{\begin{split}
&x_1y_1\otimes x_1'y_1'\ ,\ x_1y_2\otimes x_2'y_2'\ ,\ x_1y_2\otimes x_3'y_1'\ ,\ x_1y_2\otimes x_5'y_1'\\
&x_1y_2\otimes x_6'y_2'\ ,\ x_2y_2\otimes x_4'y_1'\ ,\ x_3y_2\otimes x_4'y_2'\ ,\ x_4y_2\otimes x_4'y_1'
\end{split}\right\}.\]

Moreover,  the differential $\bdy=\bdy^{\sqtens}$ is given as follows: 
\begin{align*}
\bdy\left(x_3y_2\otimes x_4'y_2'\right)&=x_2y_2\otimes x_4'y_1'+x_4y_2\otimes x_4'y_1'+x_1y_2\otimes x_2'y_2'+x_1y_2\otimes x_6'y_2'\\
\bdy\left(x_2y_2\otimes x_4'y_1' \right)&=x_1y_2\otimes x_3'y_1'+x_1y_1\otimes x_1'y_1'\\
\bdy\left(x_4y_2\otimes x_4'y_1'\right)&=x_1y_2\otimes x_5'y_1'+x_1y_1\otimes x_1'y_1'\\
\bdy\left(x_1y_2\otimes x_2'y_2'\right)&=x_1y_2\otimes x_3'y_1'+x_1y_1\otimes x_1'y_1'\\
\bdy\left(x_1y_2\otimes x_6'y_2'\right)&=x_1y_2\otimes x_5'y_1'+x_1y_1\otimes x_1'y_1'
\end{align*}
It is easy to see that the homology is generated by $[x_1y_1\otimes x_1'y_1']$ and $[x_1y_2\otimes x_2'y_2'+x_2y_2\otimes x_4'y_1']$. Therefore, $[x_1y_1\otimes x_1'y_1']$ is the contact class $(S^3, \xi_{\mathrm{std}})$. Observe that this agrees with the result in Example~\ref{ex:ball-glue}.

\end{example}

\bibliographystyle{alpha}

\bibliography{master}

\end{document}